\definecolor{highlight}{rgb}{1,0,.6}
\definecolor{extlinkz}{rgb}{0,0,1}
\definecolor{intlinkz}{rgb}{1,0,0}
\definecolor{citlinkz}{rgb}{0,.6,.1}
\newtheorem{theo}{Theorem}
\newtheorem{defi}[theo]{Definition}
\newtheorem{lem}[theo]{Lemma}
\newtheorem{prop}[theo]{Proposition}
\newtheorem{cor}[theo]{Corollary}
\newtheorem{rem}[theo]{Remark}
\newcommand{\wh}[1]{\widehat{#1}}
\newcommand{\wt}[1]{\widetilde{#1}}
\newcommand{\ol}[1]{\overline{#1}}
\newcommand{\R}{\mathbb{R}}
\newcommand{\Z}{\mathbb{Z}}
\newcommand{\T}{\mathbb{T}}
\newcommand{\C}{\mathbb{C}}
\newcommand{\N}{\mathbb{N}}
\newcommand{\SY}{\mathbf{D}}
\newcommand{\Hil}{\mathcal{H}}
\newcommand{\meas}{\mathcal X}
\newcommand{\M}{\mathcal{M}}
\newcommand{\ind}{\mathcal{I}}
\newcommand{\hh}{\mathfrak{h}}
\newcommand{\F}{\mathcal{F}}
\newcommand{\psis}[1]{\langle #1\rangle_\Gamma}
\newcommand{\vsp}{\textnormal{span}}
\newcommand{\Ran}{\textnormal{Ran}}
\newcommand{\Ker}{\textnormal{Ker}}
\newcommand{\vn}{\mathcal{R}}
\newcommand{\vnL}{\mathscr{L}}
\newcommand{\id}{{\mathrm{e}}}
\newcommand{\Id}{\mathbb{I}}
\newcommand{\Proj}{\mathbb{P}}
\newcommand{\iso}{\mathscr T}
\newcommand{\isom}[1]{\iso[#1](x)}
\newcommand{\norm}[1]{\left\|#1\right\|_{\oplus}}
\newcommand{\spr}[1]{\langle #1\rangle_{\oplus}}
\newcommand{\one}[1]{\mathlarger{\chi}_{\raisebox{-.5ex}{$\mathsmaller{#1}$}}}
\newcommand{\lat}{\mathcal{L}}
\newcommand{\LL}{\textnormal{L}}
\newcommand{\wl}{\wh{\LL}}
\title{Spaces invariant under unitary representations of discrete groups}
\author{Davide Barbieri, Eugenio Hern\'andez, Victoria Paternostro}
\begin{document}

\maketitle

\begin{abstract}
We investigate the structure of subspaces of a Hilbert space that are invariant under unitary representations of a discrete group.
We work with square integrable representations, and we show that they are those for which we can construct an isometry intertwining the representation with the right regular representation, that we call a Helson map. We then characterize invariant subspaces using a Helson map, and provide general characterizations of Riesz and frame sequences of orbits. These results extend to the nonabelian setting several known results for abelian groups. They also extend to countable families of generators previous results obtained for principal subspaces.
\end{abstract}

\section{Introduction}

The study of properties of invariant subspaces started with the results of Wiener \cite{Wie32} and Srvinivasan \cite{Sri64} showing that a subspace $V$ of $L^2(\T)$ is invariant under multiplication by exponentials of the form $e^{2 \pi i k x}, k \in \Z$ if and only if $V = \{f \one{E} \, : \, f \in L^2(\T)\}$ for some measurable set $E \subset \T$. The subject is the main object of study of the book of H. Helson \cite{Hel64}.

Strongly connected with these objects are shift-invariant spaces which are  subspaces of $L^2(\R^d)$ invariant under integer translations. Their structure was studied in \cite{BDR94a, BDR94b, RS95, Bow00}. The extension to LCA groups and their countable discrete subgroups was given in \cite{CP10, KT08}, while co-compact subgroups were considered in \cite{BR15}. Other actions than translations were considered in \cite{BHP15b, Ive15}, where the Zak transform is used to study the structure of spaces invariant under the action of an LCA group on a $\sigma$-finite measure space. The setting of compact groups was then treated in \cite{Ive18}.

A general framework that includes the invariant spaces described above is the one that we consider in this paper where we have unitary representations of a countable discrete, not necessarily abelian, group $\Gamma$ on a separable 
Hilbert space $\Hil$.   We will treat the class of square integrable representations, or, equivalently, those for which a bracket map $[\cdot, \cdot] : \Hil \times \Hil \to L^1(\vn(\Gamma))$ can be found (see Definition \ref{noncommutativeBracket}), that are called dual integrable.  Since we shall work in the nonabelian setting, the dual group of $\Gamma$ which plays an important role in the abelian case, will be replaced by the group von Neumann algebra $\vn(\Gamma)$. This approach was started in \cite{BHP15a}.

The purpose of this paper is to study subspaces invariant under dual integrable representations. We will analyze their structure and study the reproducing properties of countable families of orbits.
In the following paragraphs we describe in detail the content and structure of this paper.

After describing in Section \ref{sec:PRELIMINARIES} the tools needed in the paper, we introduce in Section \ref{sec:HELSON} the notion of a Helson map $\iso : \Hil \to L^2((\M,\nu),L^2(\vn(\Gamma)))$ associated to a unitary representation, where $(\M,\nu)$ is a $\sigma$-finite measure space. We prove that the existence of such a map is equivalent to dual integrability. Moreover, a constructive procedure is given to obtain Helson maps from brackets and vice versa.

In Section \ref{sec:LEFT} we study the structure of subspaces of $\ell_2(\Gamma)$ that are invariant under the left regular representation, giving a characterization in Theorem \ref{theo:Fourierinvariance}. This allows us to extend to the noncommutative setting the previously mentioned results of Wiener and Srinivasan.

A characterization of invariant subspaces under a dual integrable representation is given in Section \ref{sec:INVARIANT}, Theorem \ref{theo:multiplicatively}, by means of the Helson map. Such characterization is more explicit for principal invariant subspaces, see Proposition \ref{prop:BDR}, or for finitely generated ones, see Corollary \ref{cor:finitely}. As a consequence, existence of biorthogonal systems of orbits of a single element under a dual integrable representation is characterized by a property of the bracket map in Proposition \ref{prop:minimal}.

Section \ref{sec:REPRODUCING} is dedicated to study reproducing properties of orbits of a countable family of elements of $\Hil$. The reproducing properties we have in mind are those of being Riesz or frame sequences. We will prove existence of Parseval frames of orbits, and characterize families whose orbits generate frames or Riesz sequences.

Several examples are given in Section \ref{sec:EXAMPLES} to illustrate our results:
\begin{enumerate}
\item For the case of integer translations in $L^2(\R)$ the so-called fiberization mapping can be obtained from our Helson maps. \vspace{-1ex}
\item A Helson map is obtained, in the form of a Zak transform, for any representation arising from an action of a discrete group on a $\sigma$-finite measure space.\vspace{-1ex}
\item Subspaces of $\ell_2(\Gamma)$ generated by $f = a \delta_{\gamma_1} + b \delta_{\gamma_2}$ under the left regular representation are studied as an example.\vspace{-1ex}
\item We compute the bracket and a Helson map for the action of the dihedral group $\SY_3$ on $L^2(\R^2)$.\vspace{-1ex}
\item The setting of \cite{BB04, BB18} for translates in number-theoretic groups is shown to fit our general scheme. This allows us to extend the results in \cite{BB18} to several generators.\vspace{-1ex}
\end{enumerate}

\

\noindent
{\bf Acknowledgements}: 
This project has received funding from the European Union's Horizon 2020 research and innovation programme under the Marie Sk\l odowska-Curie grant agreement No 777822. In addition, D. Barbieri and E. Hern\'andez were supported by Grant MTM2016-76566-P (Ministerio de Econom\'ia y Competitividad, Spain), and V. Paternostro was supported by Grants UBACyT 20020170200057BA, CONICET-PIP 11220150100355, MINCyT-PICT 2014-1480 and 2016-2616 (Joven).

\section{Preliminaries}\label{sec:PRELIMINARIES}

The aim of this section is to introduce the basic objects and notations that we will use throughout the paper. We recall here the concept of invariant subspaces, frames and  Riesz sequences. Additionally, we revise   a notion of Fourier duality based on the right regular representation \cite{Kun58, JMP14, BHP15a} and the definition of noncommutative $L^p$ spaces, and provide introductory details on weighted noncommutative $L^2$ spaces.

Some general notation we shall use is the following. The set of all bounded and everywhere defined linear operators on a  Hilbert $\Hil$ will be denoted by 
$\mathcal{B}(\Hil)$ and the subset of $\mathcal{B}(\Hil)$ of unitary 
operator will be denoted by $\mathcal{U}(\Hil)$. For an operator $T$ defined on $\Hil$, not necessarily bounded, we denote by $\Ran(T)$ and $\Ker(T)$ its range and its kernel, respectively. An orthogonal projection onto the closed subspace $W\subset \Hil$ will be denoted by $\Proj_W$.

\subsection{Invariant subspaces}
We will work with subspaces of a Hilbert spaces $\Hil$ that are invariant under the action of a group. To be precise, we start by recalling   that, given $\Gamma$ a countable and discrete group an a Hilbert space $\Hil$, a {\it unitary representation of $\Gamma$ on $\Hil$} is a homomorphism $\Pi:\Gamma\to\mathcal{U}(\Hil)$.

\begin{defi}
Let $\Pi$ be a unitary representation of a discrete and countable group $\Gamma$ on a separable Hilbert space $\Hil$. We say that a closed subspace $V \subset \Hil$ is $\Pi$-invariant if and only if $\Pi(\gamma)V \subset V$ for all $\gamma \in \Gamma$. 
\end{defi}
Given a countable family $\Psi = \{\psi_i\}_{i \in \ind} \subset \Hil$, the closed subspace $V$ defined by 
$V=\ol{\vsp\{\Pi(\gamma)\psi_i \, : \, \gamma \in \Gamma, i \in \ind\}}^\Hil$
is $\Pi$-invariant. It is called the $\Pi$-invariant space generated by $\Psi = \{\psi_i\}_{i \in \ind}$, and we will see that any $\Pi$-invariant subspace is of this form (see e.g. Lemma \ref{lem:generators}). When $\Psi$ contains only one element $\psi$, we will simply use the notation $\psis{\psi} = \ol{\vsp\{\Pi(\gamma)\psi\}_{\gamma \in \Gamma}}^\Hil$
and we call $\psis{\psi}$ {\it principal} $\Pi$-invariant space.

\subsection{Frame and Riesz sequences}
We briefly  recall the definitions of frame and Riesz bases. For a detailed exposition on this subject we refer to \cite{Chr03}.

Let $\mathcal{H}$ be a separable Hilbert space, $\ind$ be  a finite or countable index set and   $\{f_i\}_{i\in \ind}$ be a sequence in $\mathcal{H}.$
The sequence $\{f_i\}_{i\in \ind}$ is said to be  a {\it frame} for $\mathcal{H}$ if there exist $0<A\leq B<+\infty$ such that
\begin{equation*}
A\|f\|^2\leq \sum_{i\in \ind} |\langle f,f_i\rangle|^2\leq B\|f\|^2
\end{equation*}
for all $f\in\mathcal{H}$. The constants $A$ and $B$ are called {\it frame bounds}. When $A=B=1$,  $\{f_i\}_{i\in \ind}$ is called {\it  Parseval frame}.

The sequence $\{f_i\}_{i\in \ind}$ is said to be  a {\it Riesz basis} for $\mathcal{H}$ if it is a complete system in $\mathcal{H}$ and if there exist $0<A\leq B<+\infty$ such that
\begin{equation*}
A\sum_{i\in \ind}|a_i|^2\leq \|\sum_{i\in \ind} a_if_i\|^2\leq B\sum_{i\in \ind}|a_i|^2
\end{equation*}
for all sequences $\{a_i\}_{i\in \ind}$ of finite support.
\newpage

The sequence $\{f_i\}_{i\in \ind}$ is a {\it frame (or Riesz) sequence}, if it is a frame (or Riesz basis) for the Hilbert space it spans, namely $\ol{\vsp\{f_i\}_{ i\in \ind}}^\Hil$.

\subsection{Noncommutative setting}\label{subsec:noncommutative}
Let $\Gamma$ be a discrete and countable group. The right regular representation of $\Gamma$ is the homomorphism $\rho : \Gamma \to \mathcal{U}(\ell_2(\Gamma))$ which acts on the canonical basis of $\ell_2(\Gamma)$, $\{\delta_\gamma\}_{\gamma \in \Gamma}$,   as
$$
\rho(\gamma)\delta_{\gamma'} = \delta_{\gamma'\gamma^{-1}} \quad \gamma, \gamma' \in \Gamma
$$
or, equivalently, such that $\rho(\gamma)f(\gamma') = f(\gamma'\gamma)$ for $f \in \ell_2(\Gamma)$ and 
$\gamma, \gamma'\in\Gamma$. Analogously, the left regular representation is the homomorphism $\lambda : \Gamma \to \mathcal{U}(\ell_2(\Gamma))$ which acts on the canonical basis as
$$
\lambda(\gamma)\delta_{\gamma'} = \delta_{\gamma\gamma'} \quad \gamma, \gamma' \in \Gamma
$$
or, equivalently, such that $\lambda(\gamma)f(\gamma') = f(\gamma^{-1}\gamma')$ for $f \in \ell_2(\Gamma)$
and $\gamma, \gamma'\in\Gamma$.

The right von Neumann algebra of $\Gamma$ is defined as (see e.g. \cite[Section 43, Section 12, Section 13]{Con00} or \cite[Section 3, Section 7]{Tak02})
$$
\vn(\Gamma) = \ol{\vsp\{\rho(\gamma)\}_{\gamma \in \Gamma}}^{\textsc{wot}} ,
$$
where the closure is taken in the weak operator topology (WOT).
The left von Neumann algebra $\vnL(\Gamma)$ of $\Gamma$ is defined analogously in terms of the left regular representation and we recall  that
\begin{equation}\label{eq:commutant}
\vn(\Gamma) = \vnL(\Gamma)' = \{\lambda(\gamma) \, : \, \gamma \in \Gamma\}' = \{\rho(\gamma) \, : \, \gamma \in \Gamma\}''
\end{equation}
where if $\mathcal{S}\subset \mathcal{B}(\Hil)$, $\mathcal{S}'=\{T\in\mathcal{B}(\Hil)\,:\, TS=ST, \,\forall\, S\in\mathcal{S}\}$, the commutant of $\mathcal{S}$.

Given $F \in \vn(\Gamma)$, let $\tau$ be the standard trace given by
$$
\tau(F) = \langle F \delta_\id, \delta_\id\rangle_{\ell_2(\Gamma)} ,
$$
where $\id$ is the identity of $\Gamma$. Recall that $\tau$ is normal, finite and faithful. Moreover, it has the {\it tracial property} which means that $\tau(FG)=\tau(GF)$ for all $F,G\in\vn(\Gamma)$.

For $f,g \in \ell_2(\Gamma)$, the convolution $g \ast f$ is the element of $\ell_\infty(\Gamma)$ given by
\begin{equation}\label{eq:convolution}
g \ast f (\gamma) = \sum_{\gamma' \in \Gamma} f(\gamma') g(\gamma\gamma'^{-1}) = \sum_{\gamma' \in \Gamma} g(\gamma') f(\gamma'^{-1}\gamma), \,\,\,\gamma\in\Gamma .
\end{equation}
By \cite[Proposition 43.10]{Con00}, we have that the elements of the group von Neumann algebra $\vn(\Gamma)$ are bounded convolution operators on $\ell_2(\Gamma)$. More precisely, $F \in \vn(\Gamma)$ if and only if there exists a (unique) convolution kernel $f \in \ell_2(\Gamma)$ such that $F g = g \ast f$. We will use this correspondence as our notion of Fourier duality: for $F \in \vn(\Gamma)$, we will call {\it Fourier coefficients of $F$} the values of its convolution kernel $f$, and denote it with $f=\widehat{F}=\{\wh{F}(\gamma)\}_{\gamma \in \Gamma}$. Therefore
\begin{equation}\label{eq:Risconvolution}
F g = g \ast \wh{F}  \quad \forall \ g \in \ell_2(\Gamma) .
\end{equation}
Note that, by definition of $\tau$ and using (\ref{eq:convolution}), we have 
$$
\wh{F}(\gamma) = \tau(F \rho(\gamma)), \,\,\forall\,\,\gamma\in\Gamma.
$$
Conversely, if $f \in \ell_2(\Gamma)$ is such that $f = \wh{F}$ for some $F \in \vn(\Gamma)$, we will call $F$ the {\it group Fourier transform of $f$}, which is a bounded operator given by
$$
\F_\Gamma f = F = \sum_{\gamma \in \Gamma} f(\gamma) \rho(\gamma)^*
$$
where convergence is intended in the weak operator topology. Observe that they satisfy $f = \wh{\F_\Gamma f}$, or $F = \F_\Gamma \wh{F}$.

Given two operators $F, G \in \vn(\Gamma)$, their composition can be written in terms of this Fourier duality as
\begin{equation}\label{eq:composition}
FG = \F_\Gamma\big(\wh{G} \ast \wh{F}\big).
\end{equation}
Indeed,
\begin{align*}
FG & = \big(\F_\Gamma\wh{F} \big) \big(\F_\Gamma \wh{G} \big) = \Big(\sum_{\gamma' \in \Gamma} \wh{F}(\gamma') \rho(\gamma')^*\Big)\Big(\sum_{\gamma'' \in \Gamma} \wh{G}(\gamma'') \rho(\gamma'')^*\Big)\\
& = \sum_{\gamma',\gamma'' \in \Gamma} \wh{F}(\gamma') \wh{G}(\gamma'') \rho(\gamma''\gamma')^* \ =\ \sum_{\gamma\in \Gamma} \Big(\sum_{\gamma' \in \Gamma}\wh{F}(\gamma') \wh{G}(\gamma\gamma'^{-1}) \Big) \rho(\gamma)^*\\
& = \sum_{\gamma\in \Gamma} (\wh{G}\ast\wh{F})(\gamma) \rho(\gamma)^* = \F_\Gamma( \wh{G}\ast\wh{F}).
\end{align*}

For any $1 \leq p < \infty$ let $\|\cdot\|_p$ be the norm over $\vn(\Gamma)$ given by
$$
\|F\|_p = \tau(|F|^p)^{\frac1p} ,
$$
where $|F|$ is the selfadjoint operator defined by $|F| = \sqrt{F^*F}$ and the $p$-th power is defined by functional calculus of $|F|$. 
Following \cite{Nel74, PX03, BHP15a}, we define the noncommutative $L^p(\vn(\Gamma))$ spaces for $1 \leq p < \infty$ as
$$
L^p(\vn(\Gamma)) = \ol{\vsp\{\rho(\gamma)\}_{\gamma \in \Gamma}}^{\|\cdot\|_p}
$$
while for $p = \infty$ we set $L^\infty(\vn(\Gamma)) = \vn(\Gamma)$ endowed with the operator norm.

A densely defined closed linear operator on $\ell_2(\Gamma)$ is said to be affiliated to $\vn(\Gamma)$ if it commutes with all unitary elements of $\vnL(\Gamma)$. When $p < \infty$, the elements of $L^p(\vn(\Gamma))$ are the linear operators on $\ell_2(\Gamma)$ that are affiliated to $\vn(\Gamma)$ whose $p$-norm is finite.
In particular, for $p < \infty$, the elements of $L^p(\vn(\Gamma))$   are not necessarily bounded, while a bounded operator that is affiliated to $\vn(\Gamma)$ automatically belongs to $\vn(\Gamma)$ as a consequence of von Neumann's Double Commutant Theorem. For $p = 2$ one obtains a separable Hilbert space with scalar product\vspace{-3pt}
$$
\langle F_1, F_2\rangle_2 = \tau(F_2^* F_1)\vspace{-3pt}
$$
for which the monomials $\{\rho(\gamma)\}_{\gamma \in \Gamma}$ form an orthonormal basis.
For these spaces the usual statement of H\"older inequality still holds, so that in particular for any $F \in L^p(\vn(\Gamma))$ with $1 \leq p \leq \infty$ its Fourier coefficients are well defined, and the finiteness of the trace implies that $L^p(\vn(\Gamma)) \subset L^q(\vn(\Gamma))$ whenever $q < p$. Moreover, fundamental results of Fourier analysis such as $L^1(\vn(\Gamma))$ Uniqueness Theorem, Plancherel Theorem between $L^2(\vn(\Gamma))$ and $\ell_2(\Gamma)$ still hold in the present setting (see e.g. \cite[Section 2.2]{BHP15a}). We stress that Plancherel Theorem in this setting extends the usual duality between Fourier transform and Fourier coefficients, turning the two operations into the bounded inverse of one another, between the whole $\ell_2(\Gamma)$ and $L^2(\vn(\Gamma))$.

If $F$ is a closed and densely defined selfadjoint operator that is affiliated to $\vn(\Gamma)$, we will call {\it support of $F$} the spectral projection over the set $\R \setminus \{0\}$. It is the minimal orthogonal projection $s_F$ of $\ell_2(\Gamma)$ such that $F = F s_F = s_F F$, it belongs to $\vn(\Gamma)$ (see e.g. \cite[Theorem 5.3.4]{Sun97}), and it reads explicitly
\begin{equation}\label{eq:support}
s_F = \Proj_{(\Ker(F))^\bot} = \Proj_{\ol{\Ran(F)}} .
\end{equation}

\subsection{Weighted \texorpdfstring{$L^2(\vn(\Gamma))$}{L2} spaces}\label{sec:weight}

This subsection is devoted to define a particular class of spaces that we will use in this paper, which  are called weighted $L^2(\vn(\Gamma))$ spaces. 

\begin{defi}\label{def:qL2}
Let $q \in \vn(\Gamma)$ be an orthogonal projection. We define $q L^2(\vn(\Gamma))$ to be the subspace of $L^2(\vn(\Gamma))$ given by
\begin{equation*}
q L^2(\vn(\Gamma)) := \{qF \, : \, F \in L^2(\vn(\Gamma))\}.
\end{equation*}
Note that this subspace is closed, and that $F \in q L^2(\vn(\Gamma))$ if and only if $F = qF$.
\end{defi}

Given a positive $\Omega \in L^1(\vn(\Gamma))$, let $\hh(\Omega)$ be the subspace of $\vn(\Gamma)$ defined by
$$
\hh(\Omega) := \{F \in \vn(\Gamma) \, : \, s_\Omega F = F\}
$$
where $s_\Omega$ denotes the support of $\Omega$ as defined in (\ref{eq:support}). For $F \in \hh(\Omega)$ define
\begin{equation*}
\|F\|_{2,\Omega} := \|\Omega^\frac12 F\|_2 = \tau(|F^*|^2\Omega)^\frac12.
\end{equation*}
Note that if $F\in\hh(\Omega)$ and $\|F\|_{2,\Omega}=0$, we have that $\Omega^\frac12F=0$ and then $\Ran(F)\subset \Ker(\Omega^\frac12)=\Ker(\Omega)$. This implies that $s_\Omega F=0$ and thus, $F=0$.
As a consequence, it holds that $\|\cdot\|_{2,\Omega}$ is a norm in $\hh(\Omega)$. Its 
  associated scalar product reads
$$
\langle F, G\rangle_{2,\Omega} = \langle \Omega^\frac12 F, \Omega^\frac12 G\rangle_2 = \tau(FG^*\Omega).
$$
\begin{defi}
Given a positive $\Omega \in L^1(\vn(\Gamma))$, we define the weighted space $L^2(\vn(\Gamma),\Omega)$ as the completion of $\hh(\Omega)$ with respect to the $\|\cdot\|_{2,\Omega}$ norm. That is
$$
L^2(\vn(\Gamma),\Omega) = \ol{\hh(\Omega)}^{\|\cdot\|_{2,\Omega}} .
$$
\end{defi}

\begin{prop}\label{prop:weightmap}
Let $\Omega \in L^1(\vn(\Gamma))$ be a positive operator and let $s_\Omega L^2(\vn(\Gamma))$ be as in Definition \ref{def:qL2} for $q=s_\Omega$. Let $\omega : \hh(\Omega) \to s_\Omega L^2(\vn(\Gamma))$ be the mapping defined by
\begin{equation*}
\omega(F) = \Omega^\frac12 F.
\end{equation*}
Then $\omega$ can be extended to a surjective isometry from $L^2(\vn(\Gamma),\Omega)$ onto $s_\Omega L^2(\vn(\Gamma))$.
\end{prop}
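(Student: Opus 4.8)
The plan is to check that $\omega$ is a well-defined linear isometry on the dense subspace $\hh(\Omega)$, extend it to the completion by continuity, and then prove surjectivity by showing that its range is dense.

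First I would verify that $\omega$ takes values in $s_\Omega L^2(\vn(\Gamma))$. Since $\Omega \in L^1(\vn(\Gamma))$ is positive, $\Omega^{\frac12} \in L^2(\vn(\Gamma))$ with $\|\Omega^{\frac12}\|_2^2 = \tau(\Omega)$; as $F \in \hh(\Omega) \subset \vn(\Gamma)$ is bounded, H\"older's inequality gives $\Omega^{\frac12}F \in L^2(\vn(\Gamma))$. Because $s_\Omega$ is a spectral projection of $\Omega$ it commutes with $\Omega^{\frac12}$ and satisfies $s_\Omega \Omega^{\frac12} = \Omega^{\frac12}$, hence $s_\Omega(\Omega^{\frac12}F) = \Omega^{\frac12}F$, so $\omega(F) \in s_\Omega L^2(\vn(\Gamma))$. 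Linearity is clear, and $\omega$ is isometric directly from the definition of the norms: $\|\omega(F)\|_2 = \|\Omega^{\frac12}F\|_2 = \|F\|_{2,\Omega}$. Since $\hh(\Omega)$ is dense in $L^2(\vn(\Gamma),\Omega)$ by construction and $s_\Omega L^2(\vn(\Gamma))$ is a closed subspace of the complete space $L^2(\vn(\Gamma))$, the bounded linear extension principle produces a unique isometric extension $\omega : L^2(\vn(\Gamma),\Omega) \to s_\Omega L^2(\vn(\Gamma))$; being isometric on a complete space, its range is closed.

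It then remains to show $\omega$ is onto, and since its range is closed it suffices to prove that $\omega(\hh(\Omega))$ is dense in $s_\Omega L^2(\vn(\Gamma))$. Fix $G \in s_\Omega L^2(\vn(\Gamma))$ and $\varepsilon > 0$. I would first reduce to a bounded target: choosing $B_0 \in \vn(\Gamma)$ with $\|B_0 - G\|_2 < \varepsilon/2$ (possible because $\vsp\{\rho(\gamma)\}_{\gamma\in\Gamma}$ is $\|\cdot\|_2$-dense in $L^2(\vn(\Gamma))$) and setting $B = s_\Omega B_0$, one has $B \in \vn(\Gamma)$, $s_\Omega B = B$, and, since $s_\Omega G = G$, $\|B - G\|_2 = \|s_\Omega(B_0 - G)\|_2 \le \|B_0 - G\|_2 < \varepsilon/2$. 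Next, for $\delta > 0$ let $e_\delta = \chi_{[\delta,\infty)}(\Omega)$ be the spectral projection of $\Omega$; then $e_\delta \uparrow s_\Omega$, and writing $q_\delta = s_\Omega - e_\delta$ and using the tracial property one gets $\|e_\delta B - B\|_2^2 = \tau(q_\delta\, BB^*\, q_\delta)$, which tends to $0$ as $\delta \to 0$ by normality of $\tau$, since $BB^* \in L^1(\vn(\Gamma))$ and $q_\delta \downarrow 0$. Fix $\delta$ with $\|e_\delta B - B\|_2 < \varepsilon/2$.

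The decisive point is that $\Omega^{-\frac12}e_\delta$, defined through functional calculus, is bounded with norm at most $\delta^{-\frac12}$, because $\Omega \geq \delta$ on the range of $e_\delta$. Consequently $F := \Omega^{-\frac12}e_\delta B$ is a bounded operator affiliated to $\vn(\Gamma)$, hence $F \in \vn(\Gamma)$, and $s_\Omega F = F$ because $e_\delta \leq s_\Omega$; thus $F \in \hh(\Omega)$. Since $\Omega^{\frac12}\Omega^{-\frac12}e_\delta = e_\delta$ by functional calculus, $\omega(F) = \Omega^{\frac12}F = e_\delta B$, and therefore $\|\omega(F) - G\|_2 \leq \|e_\delta B - B\|_2 + \|B - G\|_2 < \varepsilon$. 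This proves density of the range and hence surjectivity. The main obstacle is precisely this surjectivity step: the naive choice $F = \Omega^{-\frac12}G$ fails because $\Omega^{-\frac12}$ is unbounded and $G$ need not be bounded, so $F$ would escape $\vn(\Gamma)$; the spectral truncation $e_\delta$ combined with the preliminary approximation of $G$ by the bounded operator $B$ is exactly what confines $F$ to $\hh(\Omega)$.
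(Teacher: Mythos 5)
Your proof is correct, and the first half (well-definedness, isometry, extension by density, closedness of the range) coincides with the paper's. Where you genuinely diverge is in the surjectivity step. The paper argues by duality: it takes $F_0 \in s_\Omega L^2(\vn(\Gamma))$ orthogonal to the range of $\omega$, tests against the elements $\Omega^{\frac12} s_\Omega \rho(\gamma)^*$ to conclude that all Fourier coefficients $\tau(\Omega^{\frac12}F_0\rho(\gamma))$ vanish, invokes the $L^1(\vn(\Gamma))$ Uniqueness Theorem to get $\Omega^{\frac12}F_0=0$, hence $s_\Omega F_0=0$ and $F_0=0$. You instead prove density of $\omega(\hh(\Omega))$ constructively: approximate $G$ by a bounded $B=s_\Omega B_0$, then use the spectral truncation $e_\delta=\chi_{[\delta,\infty)}(\Omega)$ to make $\Omega^{-\frac12}e_\delta$ bounded, so that $F=\Omega^{-\frac12}e_\delta B$ lands in $\hh(\Omega)$ with $\omega(F)=e_\delta B$. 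Both arguments are sound. The paper's is shorter and leans on the Fourier-analytic uniqueness theorem already set up in Section 2; yours avoids that theorem entirely, uses only the spectral calculus and normality of $\tau$, and has the added benefit of exhibiting explicit approximate preimages (essentially an approximate inverse $\Omega^{-\frac12}e_\delta$), which makes transparent exactly why the naive inverse $\Omega^{-\frac12}$ fails and how the truncation repairs it. Your identification of surjectivity as the real obstacle is accurate, and all the small verifications (that $\|e_\delta B - B\|_2\to 0$ via $\tau(B^* q_\delta B)\downarrow 0$ by normality, that $s_\Omega F = F$, that $\Omega^{\frac12}\Omega^{-\frac12}e_\delta = e_\delta$) check out.
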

\begin{proof}
Observe first that, if $F \in \hh(\Omega) \subset \vn(\Gamma)$, then $\Omega^\frac12 F \in L^2(\vn(\Gamma))$ and $s_\Omega \Omega^\frac12 F = \Omega^\frac12 F$. Thus, $\Omega^\frac12 F \in s_\Omega L^2(\vn(\Gamma))$ and $w$ is well defined. Moreover,
$$
\|\omega(F)\|_2 = \|\Omega^\frac12 F\|_2 = \|F\|_{2,\Omega}.
$$
Thus, $\omega$ extends to an isometry from $L^2(\vn(\Gamma),\Omega)$ to $s_\Omega L^2(\vn(\Gamma))$. To prove surjectivity, take $F_0 \in s_\Omega L^2(\vn(\Gamma))$ such that $F_0 \bot \omega\Big(L^2(\vn(\Gamma),\Omega)\Big)$. Then, in particular, since $s_\Omega \rho(\gamma) \in \hh(\Omega)$ for all $\gamma \in \Gamma$, we have
$$
0 = \langle F_0, \Omega^\frac12 s_\Omega \rho(\gamma)^* \rangle_2 = \langle F_0, \Omega^\frac12 \rho(\gamma)^* \rangle_2 =   \tau(\Omega^\frac12 F_0 \rho(\gamma)) \quad \forall \ \gamma \in \Gamma.
$$
Therefore, $\Omega^\frac12 F_0 = 0$ by $L^1(\vn(\Gamma))$ uniqueness of Fourier coefficients. Hence, $s_\Omega F_0 = 0$, and since $F_0 \in s_\Omega L^2(\vn(\Gamma))$, then $F_0 = 0$, proving surjectivity.
\end{proof}

\begin{rem}\label{rem:extensionomega}
Note that an element $F \in L^2(\vn(\Gamma),\Omega)$ is identified with a Cauchy sequence $\{F_n\}_{n \in \N} \subset \hh(\Omega)$ with respect to the norm $\|\cdot\|_{2,\Omega}$. For any such sequence, $\{\Omega^\frac12 F_n\}_{n \in \N}$ is a Cauchy sequence in $s_\Omega L^2(\vn(\Gamma))$ and then it has a limit in $s_\Omega L^2(\vn(\Gamma))$ that we call $\Omega^\frac12 F$. This is the extension of the isometry $\omega$ to $F \in L^2(\vn(\Gamma),\Omega)$.
\end{rem}

\section{Dual integrability and Helson maps}\label{sec:HELSON}

Let us first recall the definition of bracket map of a unitary representation, as in \cite{BHP15a,HSWW10a}, which is the operator in $L^1(\vn(\Gamma))$ whose Fourier coefficients are $\{\langle \varphi, \Pi(\gamma)\psi\rangle_\Hil\big\}_{\gamma \in \Gamma}$.
\begin{defi}\label{noncommutativeBracket}
Let $\Pi$ be a unitary representation of a discrete and countable group $\Gamma$ on a separable Hilbert space $\Hil$. We say that $\Pi$ is dual integrable if there exists a sesquilinear map $[\cdot,\cdot] : \Hil \times \Hil \to L^1(\vn(\Gamma))$, called bracket map, satisfying
$$
\langle \varphi, \Pi(\gamma)\psi\rangle_\Hil = \tau([\varphi,\psi]\rho(\gamma)) \quad \forall \, \varphi, \psi \in \Hil \, , \ \forall \, \gamma \in \Gamma .
$$
In such a case we will call $(\Gamma,\Pi,\Hil)$ a dual integrable triple.
\end{defi}

Note that, as a consequence of uniqueness of Fourier coefficients in $L^1(\vn(\Gamma))$, the bracket map is unique.

According to \cite[Th. 4.1]{BHP15a}, $\Pi$ is dual integrable if and only if it is square integrable, in the sense that there exists a dense subspace $\mathcal{D}$ of $\Hil$ such that
$$
\big\{\langle \varphi, \Pi(\gamma)\psi\rangle_\Hil\big\}_{\gamma \in \Gamma} \in \ell_2(\Gamma) \quad \forall \, \varphi \in \Hil \, , \ \forall \, \psi \in \mathcal{D}.
$$

Moreover we recall that, by \cite[Prop. 3.2]{BHP15a}, the bracket map satisfies the properties
\begin{itemize}
\item[\textnormal{I)}] $[\psi_1, \psi_2]^* = [\psi_2, \psi_1]$
\item[\textnormal{II)}] $[\psi_1, \Pi(\gamma)\psi_2] = \rho(\gamma)[\psi_1, \psi_2]$ \ , \ \ $[\Pi(\gamma)\psi_1, \psi_2] = [\psi_1, \psi_2]\rho(\gamma)^*$ \ , \ \  $\forall \, \gamma \in \Gamma$
\item[\textnormal{III)}] $[\psi,\psi]$ is nonnegative, and $\|[\psi, \psi]\|_1 = \|\psi\|^2_{\Hil}$
\end{itemize}
for all $\psi, \psi_1, \psi_2 \in \Hil$.

Since, in contrast with \cite{BHP15a},  we are using here a bracket map in terms of the right regular representation, we provide a proof of Property II). By definition of the bracket map and the traciality of $\tau$ we have that for any $\gamma_o\in\Gamma$, 
\begin{align*}
\tau([\psi_1, \Pi(\gamma_0)\psi_2] \rho(\gamma)) & = \langle \psi_1 , \Pi(\gamma) \Pi(\gamma_0) \psi_2 \rangle_{\Hil} = \langle \psi_1 , \Pi(\gamma \gamma_0) \psi_2 \rangle_{\Hil}\\
& = \tau([\psi_1, \psi_2] \rho(\gamma \gamma_0)) = \tau([\psi_1, \psi_2] \rho(\gamma) \rho(\gamma_0))\\
& = \tau(\rho(\gamma_0)[\psi_1, \psi_2] \rho(\gamma)) \, , \quad \forall \, \gamma \in \Gamma .
\end{align*}
Then, by the  $L^1(\vn(\Gamma))$ uniqueness of Fourier coefficients we conclude that $[\psi_1, \Pi(\gamma_0)\psi_2] = \rho(\gamma_0)[\psi_1, \psi_2]$. The other equality is proved from this result and Property I).

Given a $\sigma$-finite measure space $(\M,\nu)$, we denote by $\norm{\varPhi}$ the norm on the Hilbert space $L^2((\M,\nu),L^2(\vn(\Gamma)))$, that reads
$$
\norm{\varPhi} = \left(\int_\M \|\varPhi(x)\|_2^2 d\nu(x)\right)^\frac12 = \left(\int_\M \tau(\varPhi(x)^*\varPhi(x)) d\nu(x)\right)^\frac12
$$
for all $\varPhi \in L^2((\M,\nu),L^2(\vn(\Gamma)))$.

\newpage

\begin{defi}\label{def:Helsonmap}
Let $\Gamma$ be a discrete group and $\Pi$ a unitary representation of $\Gamma$ on the separable Hilbert space $\Hil$. We say that the triple $(\Gamma,\Pi,\Hil)$ admits a Helson map if there exists a $\sigma$-finite measure space $(\M,\nu)$ and an isometry
$$
\iso : \Hil \to L^2((\M,\nu),L^2(\vn(\Gamma)))
$$
satisfying
\begin{equation}\label{eq:Helsonproperty}
\iso[\Pi(\gamma)\varphi] = \iso[\varphi] \rho(\gamma)^* \quad \forall \, \gamma \in \Gamma , \ \forall \, \varphi \in \Hil .
\end{equation}
\end{defi}
Observe that for $\varPsi \in L^2((\M,\nu),L^2(\vn(\Gamma)))$ and $F \in \vn(\Gamma)$ we are denoting with $\varPsi F$ the element of $L^2((\M,\nu),L^2(\vn(\Gamma)))$ that for a.e. $x \in \M$ is given by
\begin{equation}\label{eq:rightaction}
(\varPsi F )(x) = \varPsi(x) F .
\end{equation}

The main theorem of this section is the following.
\begin{theo}\label{theo:DualHelsonEquivalence}
Let $\Gamma$ be a discrete group and $\Pi$ a unitary representation of $\Gamma$ on the separable Hilbert space $\Hil$.Then, the triple $(\Gamma,\Pi,\Hil)$   is dual integrable if and only if  it  admits a Helson map.
\end{theo}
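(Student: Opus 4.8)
The plan is to prove the two implications separately, exploiting the correspondence between the bracket and the pointwise product in $L^2(\vn(\Gamma))$. I would begin with the easier converse: assuming a Helson map $\iso$ exists, define a candidate bracket by the Bochner integral
\begin{equation*}
[\varphi,\psi] := \int_\M \iso[\psi](x)^*\, \iso[\varphi](x)\, d\nu(x).
\end{equation*}
Since $\iso[\varphi](x),\iso[\psi](x)\in L^2(\vn(\Gamma))$ for $\nu$-a.e.\ $x$, the noncommutative H\"older inequality gives $\iso[\psi](x)^*\iso[\varphi](x)\in L^1(\vn(\Gamma))$ with $L^1$-norm at most $\|\iso[\psi](x)\|_2\|\iso[\varphi](x)\|_2$, and Cauchy--Schwarz in $L^2(\M,\nu)$ bounds the total integral by $\norm{\iso[\psi]}\,\norm{\iso[\varphi]}$; hence the integral converges in $L^1(\vn(\Gamma))$, and sesquilinearity is inherited from the linearity of $\iso$. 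The crux is the identity, valid for every $\gamma\in\Gamma$ by the tracial property of $\tau$, then (\ref{eq:Helsonproperty}), and finally the fact that $\iso$ is an isometry,
\begin{align*}
\tau\big([\varphi,\psi]\rho(\gamma)\big) &= \int_\M \tau\big(\iso[\psi](x)^*\iso[\varphi](x)\rho(\gamma)\big)\,d\nu(x) = \spr{\iso[\varphi],\, \iso[\psi]\rho(\gamma)^*}\\
&= \spr{\iso[\varphi],\, \iso[\Pi(\gamma)\psi]} = \langle \varphi, \Pi(\gamma)\psi\rangle_\Hil,
\end{align*}
which is precisely Definition \ref{noncommutativeBracket}, so $\Pi$ is dual integrable.

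For the direct implication I would first build a Helson map on a principal subspace $\psis{\psi}$. Put $\Omega=[\psi,\psi]$, which is nonnegative in $L^1(\vn(\Gamma))$ by property III), so that $\Omega^{\frac12}\in L^2(\vn(\Gamma))$ and hence $\Omega^{\frac12}\rho(\gamma)^*\in L^2(\vn(\Gamma))$. I then define the orbit map $\Pi(\gamma)\psi\mapsto \Omega^{\frac12}\rho(\gamma)^*$. Using property II) to write $[\Pi(\gamma)\psi,\psi]=\Omega\rho(\gamma)^*$ together with the tracial property, one checks
\begin{equation*}
\langle \Pi(\gamma)\psi,\Pi(\gamma')\psi\rangle_\Hil = \tau\big(\Omega\,\rho(\gamma)^*\rho(\gamma')\big) = \langle \Omega^{\frac12}\rho(\gamma)^*,\, \Omega^{\frac12}\rho(\gamma')^*\rangle_2.
\end{equation*}
This simultaneously shows the orbit map is well defined on, and isometric over, finite linear combinations of the orbit, so it extends to an isometry on $\psis{\psi}$, and the homomorphism property $\rho(\gamma\gamma')^*=\rho(\gamma')^*\rho(\gamma)^*$ yields exactly (\ref{eq:Helsonproperty}).

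To globalize I would decompose $\Hil$ into an at most countable orthogonal sum of principal invariant subspaces $\Hil=\bigoplus_n\psis{\psi_n}$. This is available because $\Hil$ is separable and, $\Pi$ being unitary, the orthogonal complement of a $\Pi$-invariant subspace is again $\Pi$-invariant; a greedy exhaustion of a dense sequence then yields mutually orthogonal principal pieces with dense span. Taking $\M=\N$ with counting measure $\nu$ and identifying $L^2((\N,\nu),L^2(\vn(\Gamma)))$ with the $\ell_2$-direct sum $\bigoplus_n L^2(\vn(\Gamma))$, I set the $n$-th component of $\iso[\varphi]$ to be $\iso_n[\Proj_{\psis{\psi_n}}\varphi]$, where $\iso_n$ is the principal isometry attached to $\psi_n$. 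The Pythagorean identity shows $\iso$ is an isometry, and since each $\Proj_{\psis{\psi_n}}$ commutes with $\Pi(\gamma)$ by invariance of the summands, property (\ref{eq:Helsonproperty}) passes to the direct sum.

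The step I expect to be the main obstacle lies in this direct implication: verifying that the orbit map $\Pi(\gamma)\psi\mapsto\Omega^{\frac12}\rho(\gamma)^*$ is genuinely well defined and isometric---which hinges entirely on the trace identity above and on property II)---and organizing the orthogonal cyclic decomposition of $\Hil$. By contrast, the converse reduces to routine H\"older and Fubini bookkeeping once convergence of the Bochner integral in $L^1(\vn(\Gamma))$ is established.
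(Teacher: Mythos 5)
Your proof is correct and follows essentially the same route as the paper: the bracket $[\varphi,\psi]=\int_\M \iso[\psi](x)^*\iso[\varphi](x)\,d\nu(x)$ with the same H\"older/Fubini estimates for one direction, and for the other the cyclic decomposition $\Hil=\bigoplus_n\psis{\psi_n}$ together with the principal isometry $\Pi(\gamma)\psi\mapsto[\psi,\psi]^{\frac12}\rho(\gamma)^*$, which is exactly the paper's $U_\Psi$. The only cosmetic difference is that you define this principal isometry directly into $L^2(\vn(\Gamma))$, whereas the paper factors it through the weighted space $L^2(\vn(\Gamma),[\psi,\psi])$ via the support projection $s_{[\psi,\psi]}$ (Propositions \ref{prop:weightmap} and \ref{prop:isometry}), an intermediate object it needs for later results.
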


\begin{rem}
It is known that a representation is  square integrable if and only if  it is  unitarily equivalent to a 
 subrepresentation of  a multiple copy of the right regular  representation (see \cite[Prop 4.2]{HM79}). 
In our setting, a Helson map is essentially an isomorphism that implements such unitary equivalence. 

Indeed, given $\Gamma$  a discrete group and $\Pi$ a unitary representation of $\Gamma$ on the separable Hilbert space $\Hil$ with associated  Helson map $\iso$, by a similar argument to the one used in the proof of \cite[Th. 4.1]{BHP15a}, the map
$$
\begin{array}{rcl}
\Gamma \times \iso(\Hil) & \to & \iso(\Hil)\\
(\gamma, \varPhi) & \mapsto & \varPhi \rho(\gamma)^*
\end{array}
$$
defines a unitary representation of $\Gamma$ on $\iso(\Hil)$ that is unitarily equivalent to a summand of a direct integral decomposition of the right  regular representation.

Since we are interested in the structure of such isometry, we provide here a constructive proof of both  implications of  Theorem \ref{theo:DualHelsonEquivalence} in two separate propositions: Proposition \ref{prop:propertyii}, which constructs a bracket map starting from a Helson map, and Proposition \ref{prop:periodization}, which constructs a Helson map starting from a bracket map.

\end{rem}

\begin{prop}\label{prop:propertyii}
 Let $\Gamma$ be a discrete group and $\Pi$ a unitary representation of $\Gamma$ on the separable Hilbert space $\Hil$. Let $(\Gamma,\Pi,\Hil)$ admit a Helson map $\iso$. Then it is a dual integrable triple, and the bracket map can be expressed as
\begin{equation}\label{eq:Helsonbracket}
\displaystyle[\varphi,\psi] = \int_\M \isom{\psi}^* \isom{\varphi} d\nu(x), \quad \forall\,\,\varphi,\psi\in\Hil .
\end{equation}
\end{prop}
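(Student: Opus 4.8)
The plan is to verify directly that the right-hand side of (\ref{eq:Helsonbracket}) defines a well-defined element of $L^1(\vn(\Gamma))$ and that it reproduces the correct Fourier coefficients, namely the defining identity of Definition \ref{noncommutativeBracket}. The algebraic heart of the matter is short; the work lies in the analytic bookkeeping.

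First I would settle well-definedness. For fixed $\varphi,\psi \in \Hil$ and a.e. $x \in \M$, both $\isom{\varphi}$ and $\isom{\psi}$ lie in $L^2(\vn(\Gamma))$, so by the noncommutative H\"older inequality the pointwise product $\isom{\psi}^*\isom{\varphi}$ belongs to $L^1(\vn(\Gamma))$ with $\|\isom{\psi}^*\isom{\varphi}\|_1 \leq \|\isom{\psi}\|_2\,\|\isom{\varphi}\|_2$. Integrating and using the Cauchy--Schwarz inequality in the $x$-variable yields
\begin{equation*}
\int_\M \|\isom{\psi}^*\isom{\varphi}\|_1\, d\nu(x) \leq \norm{\iso[\psi]}\,\norm{\iso[\varphi]} = \|\psi\|_\Hil\,\|\varphi\|_\Hil < \infty ,
\end{equation*}
the final equality using that $\iso$ is an isometry. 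Hence the integral in (\ref{eq:Helsonbracket}) converges as a Bochner integral in $L^1(\vn(\Gamma))$, and it is sesquilinear, linearity in $\varphi$ and conjugate-linearity in $\psi$ being inherited from the linearity of $\iso$ and from the adjoint.

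Next I would compute the Fourier coefficients. Since $\iso$ is a linear isometry it preserves inner products, so by polarization
\begin{equation*}
\langle \varphi, \Pi(\gamma)\psi\rangle_\Hil = \spr{\iso[\varphi], \iso[\Pi(\gamma)\psi]} = \int_\M \tau\big(\iso[\Pi(\gamma)\psi](x)^*\,\isom{\varphi}\big)\, d\nu(x) .
\end{equation*}
The Helson property (\ref{eq:Helsonproperty}) gives $\iso[\Pi(\gamma)\psi](x) = \isom{\psi}\rho(\gamma)^*$, whose adjoint is $\rho(\gamma)\isom{\psi}^*$. Substituting and then using the tracial property of $\tau$ to move $\rho(\gamma)$ to the right yields
\begin{equation*}
\langle \varphi, \Pi(\gamma)\psi\rangle_\Hil = \int_\M \tau\big(\isom{\psi}^*\,\isom{\varphi}\,\rho(\gamma)\big)\, d\nu(x) .
\end{equation*}
Finally, because $|\tau(F)| \leq \|F\|_1$ and right multiplication by $\rho(\gamma)\in\vn(\Gamma)$ is bounded on $L^1(\vn(\Gamma))$, the functional $F \mapsto \tau(F\rho(\gamma))$ is $\|\cdot\|_1$-continuous and thus commutes with the Bochner integral, giving $\langle \varphi, \Pi(\gamma)\psi\rangle_\Hil = \tau([\varphi,\psi]\rho(\gamma))$ for all $\gamma$, which is precisely the defining identity.

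I expect the only real obstacle to be this analytic bookkeeping rather than the algebra: one must check that $x \mapsto \isom{\psi}^*\isom{\varphi}$ is Bochner measurable and integrable into $L^1(\vn(\Gamma))$ and that the trace passes through the integral. Measurability follows from the continuity of the adjoint on $L^2(\vn(\Gamma))$ and of the H\"older multiplication $L^2(\vn(\Gamma))\times L^2(\vn(\Gamma)) \to L^1(\vn(\Gamma))$, while integrability and the interchange follow from the bound displayed above together with the $\|\cdot\|_1$-continuity of $\tau(\cdot\,\rho(\gamma))$; once these are in place, the identity $\langle\varphi,\Pi(\gamma)\psi\rangle_\Hil = \tau([\varphi,\psi]\rho(\gamma))$ falls out cleanly.
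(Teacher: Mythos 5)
Your proposal is correct and follows essentially the same route as the paper's proof: the same H\"older--Cauchy--Schwarz bound $\|\psi\|_\Hil\|\varphi\|_\Hil$ to place the integral in $L^1(\vn(\Gamma))$, the same use of the isometry and the intertwining property (\ref{eq:Helsonproperty}) together with traciality to identify the Fourier coefficients, and the same interchange of $\tau(\cdot\,\rho(\gamma))$ with the integral (which the paper justifies via normality of $\tau$ and you via $\|\cdot\|_1$-continuity against the Bochner integral). Your additional remarks on sesquilinearity and Bochner measurability are harmless extra bookkeeping, not a different argument.
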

\begin{proof}
Let us  first prove that the right hand side of  (\ref{eq:Helsonbracket}) is in $L^1(\vn(\Gamma))$. For this, we only need to see that its norm is finite, which is true because
\begin{align*}
\Big\| & \int_\M \isom{\psi}^* \isom{\varphi} d\nu(x) \Big\|_{1} \leq \int_\M \|\isom{\psi}^* \isom{\varphi}\|_1 d\nu(x)\\
& \leq \int_\M \|\isom{\psi}\|_2 \|\isom{\varphi}\|_2 d\nu(x) \leq \norm{\iso[\psi]} \norm{\iso[\varphi]} = \|\psi\|_\Hil\|\varphi\|_\Hil \end{align*}
where we have used H\"older's inequality on $L^2(\vn(\Gamma))$ and on $L^2(\M,d\nu)$ and the fact that $\iso$ is an isometry. Moreover, since $\iso$ satisfies (\ref{eq:Helsonproperty}), for $\varphi, \phi \in \Hil$ and $\gamma \in \Gamma$, we have
\begin{align*}
\langle & \varphi, \Pi(\gamma)\phi \rangle_\Hil = \spr{\iso[\varphi], \iso[\Pi(\gamma)\phi]} = \int_\M \langle \isom{\varphi}, \isom{\Pi(\gamma)\phi}\rangle_{2} d\nu(x)\\
& = \int_\M \langle \isom{\varphi}, \isom{\phi}\rho(\gamma)^*\rangle_{2} d\nu(x) = \int_\M \tau\Big(\rho(\gamma)\isom{\phi}^*\isom{\varphi}\Big) d\nu(x)\\
& = \tau\bigg(\rho(\gamma) \int_\M \isom{\phi}^* \isom{\varphi} d\nu(x)\bigg)
\end{align*}
where the last identity is due to Fubini's Theorem, which holds by the normality of $\tau$. 
Now, since we have that the Fourier coefficients of $[\varphi,\psi]$ and $\displaystyle\!\!\int_\M \isom{\psi}^* \isom{\varphi} d\nu(x)$ coincide, then (\ref{eq:Helsonbracket}) holds by the $L^1(\vn(\Gamma))$ Uniqueness Theorem.
\end{proof}

We set out to prove the converse of Proposition \ref{prop:propertyii}, to finally prove Theorem \ref{theo:DualHelsonEquivalence}. The following result is needed.

\begin{lem}\label{lem:generators}
Let $\Pi$ be a unitary representation of a discrete and countable group $\Gamma$ on a separable Hilbert space $\Hil$, and let $V \subset \Hil$ be a  $\Pi$-invariant subspace. Then there exists a countable family $\{\psi_i\}_{i \in \ind}$ satisfying $\psis{\psi_i} \bot \psis{\psi_j}$ for $i \neq j$ and such that $V$ decomposes into the orthogonal direct sum
\begin{equation}\label{eq:GramSchmidt}
V = \bigoplus_{i \in \ind} \psis{\psi_i} .
\end{equation}
\end{lem}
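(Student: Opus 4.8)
The plan is to build the family $\{\psi_i\}_{i\in\ind}$ by a Gram--Schmidt type orthogonalization adapted to the group action, using separability to obtain a countable family directly. Since $\Hil$ is separable, so is the closed subspace $V$; fix a countable dense set $\{v_n\}_{n\in\N}\subset V$. The construction is inductive: having produced mutually orthogonal principal invariant subspaces $\psis{\psi_1},\dots,\psis{\psi_k}$ whose closed orthogonal sum is a $\Pi$-invariant subspace $W_k\subset V$, I let $n_{k+1}$ be the least index for which $\Proj_{V\ominus W_k}v_{n_{k+1}}\neq 0$, set $\psi_{k+1}:=\Proj_{V\ominus W_k}v_{n_{k+1}}$, and put $W_{k+1}:=W_k\oplus\psis{\psi_{k+1}}$. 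If at some stage no such index exists the construction stops.

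Two structural facts make each step legitimate. First, the principal invariant space $\psis{\psi}$ generated by any vector is $\Pi$-invariant by its very definition. Second, and crucially, the orthogonal complement $V\ominus W_k$ of a $\Pi$-invariant subspace inside the $\Pi$-invariant space $V$ is again $\Pi$-invariant: if $w\bot W_k$, then for every $\gamma\in\Gamma$ and every $u\in W_k$ one has $\langle \Pi(\gamma)w,u\rangle_\Hil=\langle w,\Pi(\gamma)^{*}u\rangle_\Hil=\langle w,\Pi(\gamma^{-1})u\rangle_\Hil=0$, since $\Pi$ is unitary and $\Pi(\gamma^{-1})u\in W_k$. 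Consequently $\psi_{k+1}\in V\ominus W_k$ forces $\psis{\psi_{k+1}}\subset V\ominus W_k$, so that $\psis{\psi_{k+1}}\bot W_k$ and in particular $\psis{\psi_{k+1}}\bot\psis{\psi_i}$ for all $i\leq k$; this secures the required pairwise orthogonality at every stage.

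It remains to show that $W:=\bigoplus_{i\in\ind}\psis{\psi_i}$ recovers all of $V$. The inclusion $W\subset V$ is immediate, since each $\psi_i\in V$ and $V$ is closed and $\Pi$-invariant. For the converse I track how the dense vectors are absorbed: writing $v_{n_{k+1}}=\Proj_{W_k}v_{n_{k+1}}+\psi_{k+1}$ shows that $v_{n_{k+1}}\in W_{k+1}$, so once an index is selected its vector lies in $W$; moreover projections onto the shrinking complements $V\ominus W_k$ can only decrease in norm, so an index whose projection ever vanishes stays absorbed thereafter. The least-index rule then forces the selected indices to satisfy $n_1<n_2<\cdots$, whence $n_k\to\infty$ and every fixed $n$ is either absorbed at some finite stage or selected. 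Thus each $v_n\in W$, and since $\{v_n\}_{n\in\N}$ is dense and $W$ is closed we obtain $V\subset W$, giving $V=\bigoplus_{i\in\ind}\psis{\psi_i}$.

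The main obstacle is precisely this last exhaustion argument: one must verify that the greedy procedure does not stall on a proper subspace, i.e.\ that the bookkeeping with the least-index rule genuinely guarantees that every dense vector is captured in the closed orthogonal sum. As an alternative that sidesteps the explicit bookkeeping, one may invoke Zorn's lemma to extract a \emph{maximal} mutually orthogonal family of principal invariant subspaces inside $V$; the invariance of the orthogonal complement then rules out any nonzero leftover (a nonzero $\psi\in V\ominus W$ would yield $\psis{\psi}\subset V\ominus W$, contradicting maximality), and countability of the family follows because a collection of pairwise orthogonal nonzero subspaces of a separable Hilbert space is necessarily countable.
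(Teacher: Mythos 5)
Your proof is correct and takes essentially the same route as the paper: a Gram--Schmidt-type greedy construction in which one projects successive elements of a countable spanning set onto the orthogonal complement of the invariant subspace built so far, using unitarity of $\Pi$ to see that this complement is again $\Pi$-invariant and a least-index bookkeeping argument to exhaust $V$ (the paper uses an orthonormal basis of $V$ in place of your dense sequence, which makes the absorption step slightly more immediate, but the argument is the same). Your closing remark that Zorn's lemma plus separability gives a cleaner, bookkeeping-free alternative is also valid, though not what the paper does.
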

\begin{proof}
Let $\{e_n\}_{n \in \N}$ be an orthonormal basis for $V$; choose $\psi_1 = e_1$ and let $V_1 = \psis{\psi_1}$. If $V_1 = V$ the lemma is proved. If $V_1 \neq V$, let $e_{n_2}$ be the first element of $\{e_n\}_{n \in \N}$ such that $e_{n_2} \notin V_1$. Define $\psi_2 = \Proj_{V_1^\bot} e_{n_2}$, where $\Proj_{V_1^\bot}$ stands for the orthogonal projection of $\Hil$ onto $V_1^\bot$ and $V_1^\bot$ is the orthogonal complement of $V_1$ in $V$ (i.e. $V=V_1\oplus V_1^\bot$). It holds that $V_1 \perp \psis{\psi_2}$ since, for $\gamma_1, \gamma_2 \in \Gamma$,
$$
\langle \Pi(\gamma_1) \psi_1, \Pi(\gamma_2)\psi_2\rangle_\Hil = \langle \Pi(\gamma_2^{-1}\gamma_1)\psi_1, \psi_2\rangle_\Hil = 0
$$
because $\Pi(\gamma_2^{-1}\gamma_1)\psi_1 \in V_1$ and $\psi_2 \in V_1^\bot$. Let $V_2 = \psis{\psi_1} \oplus \psis{\psi_2}$. We iterate the process to obtain
$$
V_k = \bigoplus_{j = 1}^k \psis{\psi_j} ,
$$
where $\psis{\psi_i} \perp \psis{\psi_j}$ for $i \neq j$, $i,j = 1, \dots, k$. Since $\{e_1,\dots,e_{n_k}\} \subset V_k$ and $V= \ol{\vsp\{e_n\}_{n \in \N}}^\Hil$, one gets (\ref{eq:GramSchmidt}) after a countable number of steps.
\end{proof}
\begin{rem}
From Lemma \ref{lem:generators} one concludes that any $\Pi$-invariant subspace $V$ of $\Hil$ is generated by a countable family of elements of $V$, namely that $V = \ol{\vsp\{\Pi(\gamma)\psi_i \, : \, \gamma \in \Gamma, i \in \ind\}}$.
\end{rem}

When $\Pi$ is dual integrable, the bracket $[\psi,\psi]$ for nonzero $\psi \in \Hil$ provides a positive $L^1(\vn(\Gamma))$ weight that we can use in order to define the weighted space $L^2(\vn(\Gamma),[\psi,\psi])$ as in Subsection \ref{sec:weight}. Explicitly, the induced norm is
$$
\|F\|_{2,[\psi,\psi]} = \Big(\tau(|F^*|^2[\psi,\psi])\Big)^\frac12 = \|[\psi,\psi]^\frac12 F \|_2
$$
and the inner product is
$$
\langle F_1, F_2\rangle_{2,[\psi,\psi]} = \langle [\psi,\psi]^\frac12 F_1 , [\psi,\psi]^\frac12 F_2\rangle_2 = \tau(F_2^*[\psi,\psi]F_1).
$$
The associated weighted space is needed for the following result, which was proved in \cite[Prop. 3.4]{BHP15a} and lies at the basis of our subsequent constructions. For $\psi \in \Hil$ let us use, in accordance with Subsection \ref{sec:weight}, the notation
$$
\hh = \hh([\psi,\psi]) = \{F \in \vn(\Gamma) \ | F = s_{[\psi,\psi]} F\} .
$$

\begin{prop}\label{prop:isometry}
Let $\Gamma$ be a discrete group and $\Pi$ a unitary representation of $\Gamma$ on the separable Hilbert space $\Hil$
such that $(\Gamma, \Pi,\Hil)$ is a dual integrable triple. Then for any nonzero $\psi \in \Hil$ the map $S_\psi : \vsp\{\Pi(\gamma)\psi\}_{\gamma \in \Gamma} \to \hh$ given by
\begin{equation}\label{eq:isometry}
S_\psi \Big[\sum_{\gamma\in \Gamma} f(\gamma) \Pi(\gamma)\psi \Big] = s_{[\psi,\psi]}\sum_{\gamma\in \Gamma} f(\gamma) \rho(\gamma)^*
\end{equation}
is well-defined and extends to a linear surjective isometry $$S_\psi : \psis{\psi} \to L^2(\vn(\Gamma),[\psi,\psi])$$ satisfying
\begin{equation}\label{eq:intertwining}
S_\psi[\Pi(\gamma)\varphi] = S_\psi[\varphi] \rho(\gamma)^* \, , \quad \forall \, \varphi \in \psis{\psi}.
\end{equation}
\end{prop}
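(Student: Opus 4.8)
The plan is to abbreviate $\Omega := [\psi,\psi]$ and to establish the isometry identity on the dense subspace $\vsp\{\Pi(\gamma)\psi\}_{\gamma \in \Gamma}$, from which well-definedness comes for free; then to extend by density, to verify surjectivity, and finally to check the intertwining relation (\ref{eq:intertwining}). The heart of the matter is the identity $\|\varphi\|_\Hil^2 = \|S_\psi\varphi\|_{2,\Omega}^2$ for $\varphi = \sum_\gamma f(\gamma)\Pi(\gamma)\psi$ with $f$ finitely supported. On the one hand, using that $\Pi$ is unitary, so that $\langle \Pi(\gamma)\psi, \Pi(\gamma')\psi\rangle_\Hil = \langle \psi, \Pi(\gamma^{-1}\gamma')\psi\rangle_\Hil$, together with the defining property of the bracket, I get
\begin{align*}
\|\varphi\|_\Hil^2 &= \sum_{\gamma,\gamma'} f(\gamma)\overline{f(\gamma')}\,\langle \psi, \Pi(\gamma^{-1}\gamma')\psi\rangle_\Hil\\
&= \sum_{\gamma,\gamma'} f(\gamma)\overline{f(\gamma')}\,\tau\big(\Omega\,\rho(\gamma^{-1}\gamma')\big).
\end{align*}
On the other hand, writing $G = \sum_\gamma f(\gamma)\rho(\gamma)^*$ so that $S_\psi\varphi = s_\Omega G$, I compute $\|s_\Omega G\|_{2,\Omega}^2 = \|\Omega^{1/2} s_\Omega G\|_2^2 = \tau(G^* s_\Omega \Omega s_\Omega G)$; since $s_\Omega \Omega s_\Omega = \Omega$ by the support property (\ref{eq:support}) and $\tau$ is tracial, this equals $\tau(G^*\Omega G) = \tau(\Omega G G^*)$, and expanding $G G^* = \sum_{\gamma,\gamma'} f(\gamma)\overline{f(\gamma')}\,\rho(\gamma^{-1}\gamma')$ reproduces exactly the same double sum. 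Applying this identity to a finite combination representing the zero vector shows that $S_\psi$ is well defined, and its image clearly lies in $\hh$.

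Next I would extend $S_\psi$ to all of $\psis{\psi}$. The domain $\vsp\{\Pi(\gamma)\psi\}_{\gamma\in\Gamma}$ is dense in $\psis{\psi}$ by definition, and a densely defined isometry extends uniquely to an isometry of the completion $\psis{\psi}$ into $L^2(\vn(\Gamma),\Omega)$, whose range is consequently closed. The intertwining relation (\ref{eq:intertwining}) I would first verify on the dense subspace: for $\varphi = \sum_{\gamma'} f(\gamma')\Pi(\gamma')\psi$ one has $\Pi(\gamma)\varphi = \sum_{\gamma'} f(\gamma')\Pi(\gamma\gamma')\psi$, and using $\rho(\gamma\gamma')^* = \rho(\gamma')^*\rho(\gamma)^*$ together with the associativity $s_\Omega(A\rho(\gamma)^*) = (s_\Omega A)\rho(\gamma)^*$ yields $S_\psi[\Pi(\gamma)\varphi] = S_\psi[\varphi]\rho(\gamma)^*$. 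Since right multiplication by the unitary $\rho(\gamma)^*$ maps $\hh$ into $\hh$ and is an isometry of $L^2(\vn(\Gamma),\Omega)$ (by traciality, $\|A\rho(\gamma)^*\|_{2,\Omega} = \|A\|_{2,\Omega}$), both sides of the identity are continuous in $\varphi$, and the relation propagates from the dense subspace to all of $\psis{\psi}$.

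For surjectivity I would argue that the orthogonal complement of $\Ran(S_\psi)$ is trivial, transporting the problem to $s_\Omega L^2(\vn(\Gamma))$ through the isometry $\omega$ of Proposition \ref{prop:weightmap}. A vector orthogonal to every $S_\psi[\Pi(\gamma)\psi] = s_\Omega\rho(\gamma)^*$ corresponds under $\omega$ to an $F_0 \in s_\Omega L^2(\vn(\Gamma))$ orthogonal to every $\Omega^{1/2}\rho(\gamma)^*$; then the computation $\langle F_0, \Omega^{1/2}\rho(\gamma)^*\rangle_2 = \tau(\Omega^{1/2}F_0\rho(\gamma))$ shows that all Fourier coefficients of $\Omega^{1/2}F_0 \in L^1(\vn(\Gamma))$ vanish, so $\Omega^{1/2}F_0 = 0$ by the $L^1(\vn(\Gamma))$ Uniqueness Theorem, whence $\Ran(F_0)\subset\Ker(\Omega)$, $s_\Omega F_0 = 0$, and finally $F_0 = 0$ since $F_0 = s_\Omega F_0$. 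This is precisely the surjectivity argument already carried out in the proof of Proposition \ref{prop:weightmap}, so I would invoke it rather than repeat it.

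The main obstacle is the isometry identity of the first step: the entire proof hinges on combining the bracket definition with the support-projection identity $s_\Omega\Omega s_\Omega = \Omega$ and the tracial property of $\tau$ in the correct order, taking care with the noncommutativity of the operators and with the convention $\langle F_1,F_2\rangle_2 = \tau(F_2^*F_1)$. Once the two double sums are matched, well-definedness is automatic and the remaining steps—density extension, the intertwining check, and surjectivity—are routine, the last being a verbatim repetition of the corresponding part of Proposition \ref{prop:weightmap}.
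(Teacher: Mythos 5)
Your proof is correct and follows essentially the same route as the paper's: the isometry identity on the dense span $\vsp\{\Pi(\gamma)\psi\}_{\gamma\in\Gamma}$ obtained from the defining property of the bracket, extension by density, verification of the intertwining relation on finite sums, and surjectivity via the $L^1(\vn(\Gamma))$ Uniqueness Theorem. Deducing well-definedness directly from the isometry identity applied to combinations representing the zero vector is in fact slightly cleaner than the paper's separate verification of the stabilizer relation $\Pi(\gamma)\psi=\psi$.
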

\begin{proof}
Let us first see that $S_\psi$ is well-defined. Suppose that for some $\gamma \in \Gamma$ we have $\Pi(\gamma)\psi = \psi$. Then we need to prove that $s_{[\psi,\psi]} \rho(\gamma)^* = s_{[\psi,\psi]}$. For this, let $v \in \Ran([\psi,\psi])$, and let $u \in \ell_2(\Gamma)$ be such that $v = [\psi,\psi]u$. Then
$$
\rho(\gamma)v = \rho(\gamma)[\psi,\psi]u = [\psi,\Pi(\gamma)\psi]u = [\psi,\psi]u = v,
$$
where we have used Property II) of the bracket map. A simple density argument then ensures that $\rho(\gamma)v = v$ for all $v \in \ol{\Ran([\psi,\psi])}$. This means that $\rho(\gamma) s_{[\psi,\psi]} = s_{[\psi,\psi]}$, and the conclusion follows by taking the adjoint.

Let now $\varphi = \displaystyle\sum_{\gamma \in \Gamma} f(\gamma) \Pi(\gamma)\psi \in \vsp\{\Pi(\gamma)\psi\}_{\gamma \in \Gamma}$ be a finite sum. Then
\begin{align*}
\|S_\psi[\varphi]\|^2_{2,[\psi,\psi]} & = \|[\psi,\psi]^\frac12 \sum_{\gamma\in \Gamma} f(\gamma) \rho(\gamma)^*\|_2^2\\
& = \tau \Big(\sum_{\gamma_1,\gamma_2 \in \Gamma} \ol{f(\gamma_1)} \rho(\gamma_1)[\psi,\psi] f(\gamma_2)\rho(\gamma_2)^*\Big) = \tau \Big([\varphi,\varphi]\Big) = \|\varphi\|_\Hil^2.
\end{align*}
Therefore, $S_\psi$ can be extended by density to a linear isometry from $\psis{\psi}$ to $L^2(\vn(\Gamma),[\psi,\psi])$.
To prove surjectivity, suppose that $F \in L^2(\vn(\Gamma),[\psi,\psi])$ satisfies
$$
\langle F, S_\psi[\varphi]\rangle_{2,[\psi,\psi]} = 0 \quad \forall \ \varphi \in \psis{\psi}.
$$
In particular, for all $\gamma \in \Gamma$
$$
0 = \langle F, S_\psi[\Pi(\gamma)\psi]\rangle_{2,[\psi,\psi]} = \langle F,s_{[\psi,\psi]}\rho(\gamma)^*\rangle_{2,[\psi,\psi]} = \tau(\rho(\gamma)[\psi,\psi]F).
$$
Since both $[\psi,\psi]^\frac12$ and $[\psi,\psi]^\frac12 F$ belong to $L^2(\vn(\Gamma))$, see Remark \ref{rem:extensionomega}, then $[\psi,\psi] F \in L^1(\vn(\Gamma))$ and by the uniqueness of Fourier coefficients one gets $[\psi,\psi]F = 0$. This implies $[\psi,\psi]^\frac12F = 0$, so $\|F\|_{2,[\psi,\psi]} = 0$ and  hence $F = 0$.

Finally, to prove (\ref{eq:intertwining}), it suffices to prove it on a dense subspace. If $\varphi = \displaystyle\sum_{\gamma \in \Gamma} f(\gamma) \Pi(\gamma)\psi \in \vsp\{\Pi(\gamma)\psi\}_{\gamma \in \Gamma}$ is a finite sum, then\vspace{-2ex}
\begin{align*}
S_\psi[\Pi(\gamma)\varphi] & = S_\psi\Big[\sum_{\gamma'\in \Gamma} f(\gamma') \Pi(\gamma\gamma')\psi\Big]
= s_{[\psi,\psi]}\sum_{\gamma'\in \Gamma} f(\gamma') \rho(\gamma\gamma')^* \\
& = s_{[\psi,\psi]} \sum_{\gamma'\in \Gamma}\! f(\gamma') \rho(\gamma')^* \rho(\gamma)^* = S_\psi[\varphi]\rho(\gamma)^* . \qedhere \vspace{-2ex}
\end{align*}
\end{proof}

We are now ready to prove the converse of Proposition \ref{prop:propertyii} to finally get a complete proof of Theorem \ref{theo:DualHelsonEquivalence}.  
\begin{prop}\label{prop:periodization}
Let $\Gamma$ be a discrete group and $\Pi$ a unitary representation of $\Gamma$ on the separable Hilbert space $\Hil$
such that $(\Gamma, \Pi,\Hil)$ is a dual integrable triple. Then $(\Gamma, \Pi,\Hil)$ admits a Helson map.
\end{prop}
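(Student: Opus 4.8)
The plan is to build the Helson map by gluing together the principal pieces produced by Proposition \ref{prop:isometry}, after flattening each weighted space into a genuine subspace of $L^2(\vn(\Gamma))$ by means of the isometry $\omega$ of Proposition \ref{prop:weightmap}. Since $\Hil$ is itself $\Pi$-invariant, Lemma \ref{lem:generators} furnishes a countable family $\{\psi_i\}_{i \in \ind}$, with each $\psi_i$ nonzero, satisfying $\psis{\psi_i} \perp \psis{\psi_j}$ for $i \neq j$ and $\Hil = \bigoplus_{i \in \ind}\psis{\psi_i}$. I would take as measure space $(\M,\nu)$ the index set $\ind$ equipped with the counting measure, which is $\sigma$-finite because $\ind$ is countable; then $L^2((\M,\nu),L^2(\vn(\Gamma)))$ is precisely the $\ell_2$-direct sum $\bigoplus_{i \in \ind}L^2(\vn(\Gamma))$, with norm $\norm{\varPhi}^2 = \sum_{i \in \ind}\|\varPhi(i)\|_2^2$.

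For each $i \in \ind$, Proposition \ref{prop:isometry} gives a surjective isometry $S_{\psi_i} : \psis{\psi_i} \to L^2(\vn(\Gamma),[\psi_i,\psi_i])$ satisfying $S_{\psi_i}[\Pi(\gamma)\varphi] = S_{\psi_i}[\varphi]\rho(\gamma)^*$, and Proposition \ref{prop:weightmap} applied to $\Omega = [\psi_i,\psi_i]$ gives a surjective isometry $\omega_i : L^2(\vn(\Gamma),[\psi_i,\psi_i]) \to s_{[\psi_i,\psi_i]}L^2(\vn(\Gamma)) \subset L^2(\vn(\Gamma))$ acting by $\omega_i(F) = [\psi_i,\psi_i]^{\frac12}F$. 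I would then set $T_i := \omega_i \circ S_{\psi_i} : \psis{\psi_i} \to L^2(\vn(\Gamma))$ and define the candidate map componentwise by $\iso[\varphi](i) := T_i\big[\Proj_{\psis{\psi_i}}\varphi\big]$ for $\varphi \in \Hil$.

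The verification splits into two checks. For the isometry property, orthogonality of the decomposition gives $\|\varphi\|_\Hil^2 = \sum_{i}\|\Proj_{\psis{\psi_i}}\varphi\|_\Hil^2$, and each summand equals $\|T_i[\Proj_{\psis{\psi_i}}\varphi]\|_2^2$, since $S_{\psi_i}$ is an isometry into $L^2(\vn(\Gamma),[\psi_i,\psi_i])$ and $\omega_i$ is an isometry onto $s_{[\psi_i,\psi_i]}L^2(\vn(\Gamma))$; summing over $i$ yields $\|\varphi\|_\Hil^2 = \norm{\iso[\varphi]}^2$, which in particular guarantees that $\iso[\varphi]$ lies in the target space. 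For the Helson intertwining relation (\ref{eq:Helsonproperty}), one uses that $\Pi(\gamma)$ preserves each $\psis{\psi_i}$, so that $\Proj_{\psis{\psi_i}}\Pi(\gamma)\varphi = \Pi(\gamma)\Proj_{\psis{\psi_i}}\varphi$; then the crucial observation is that left multiplication by $[\psi_i,\psi_i]^{\frac12}$ commutes with right multiplication by $\rho(\gamma)^*$, whence $T_i[\Pi(\gamma)\varphi'] = \omega_i\big(S_{\psi_i}[\varphi']\rho(\gamma)^*\big) = \big(T_i[\varphi']\big)\rho(\gamma)^*$ for $\varphi' \in \psis{\psi_i}$. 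Assembling over $i$ gives $\iso[\Pi(\gamma)\varphi] = \iso[\varphi]\rho(\gamma)^*$, which is exactly (\ref{eq:Helsonproperty}).

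The construction is otherwise elementary; the one conceptual point I would regard as the heart of the argument is recognizing that the weighted spaces of Proposition \ref{prop:isometry} can be collapsed into a single unweighted target $L^2(\vn(\Gamma))$ by Proposition \ref{prop:weightmap} in a way that respects the right $\vn(\Gamma)$-action, precisely because the flattening map $\omega_i$ is a \emph{left} multiplication and hence commutes with the right action of $\rho(\gamma)^*$. Were this not the case the intertwining property would be destroyed by the flattening step, so this commutation is exactly what turns the orthogonal direct-sum assembly into an honest Helson map. Together with Proposition \ref{prop:propertyii} this completes the proof of Theorem \ref{theo:DualHelsonEquivalence}.
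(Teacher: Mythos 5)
Your construction is exactly the paper's: the map $\varphi \mapsto \{[\psi_i,\psi_i]^{\frac12}S_{\psi_i}[\Proj_{\psis{\psi_i}}\varphi]\}_{i\in\ind}$ built from Lemma \ref{lem:generators}, Proposition \ref{prop:isometry}, and the flattening isometry of Proposition \ref{prop:weightmap}, with the target realized as $\ell_2(\ind,L^2(\vn(\Gamma)))$. Your explicit remark that the left multiplication by $[\psi_i,\psi_i]^{\frac12}$ commutes with the right action of $\rho(\gamma)^*$ makes precise a step the paper leaves implicit, but the argument is the same.
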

\begin{proof}
Let $\Psi=\{\psi_i\}_{i\in\ind}$ be a family as in Lemma \ref{lem:generators} for $\Hil$, i.e.
$\Hil = \displaystyle\bigoplus_{i \in \ind} \psis{\psi_i}$. \vspace{-2ex}\\For $\varphi \in \Hil$ define
$$
U_\Psi(\varphi) = \Big\{[\psi_i,\psi_i]^\frac12 S_{\psi_i}[\Proj_{\psis{\psi_i}}\varphi]\Big\}_{i \in \ind}
$$
where $S_{\psi_i}$ is given by Proposition \ref{prop:isometry} and $\Proj_{\psis{\psi_i}}$ denotes the orthogonal projection of $\Hil$ onto $\psis{\psi_i}$. We shall show that $U_\Psi$ is a Helson map for $(\Gamma,\Pi,\Hil)$ taking values in $\ell_2(\ind,L^2(\vn(\Gamma)))$. For $\varphi \in \Hil$, by Proposition \ref{prop:isometry} we get
\begin{align*}
\|U_\Psi(\varphi)\|^2_{\ell_2(\ind,L^2(\vn(\Gamma)))} & = \sum_{i \in \ind} \|[\psi_i,\psi_i]^\frac12 S_{\psi_i}[\Proj_{\psis{\psi_i}}\varphi]\|^2_2 = \sum_{i \in \ind} \|S_{\psi_i}[\Proj_{\psis{\psi_i}}\varphi]\|^2_{2,[\psi_i,\psi_i]}\\
& = \sum_{i \in \ind} \|\Proj_{\psis{\psi_i}}\varphi\|_\Hil^2 = \|\varphi\|_\Hil^2 .
\end{align*}
This shows that $U_\Psi(\varphi) : \Hil \to \ell_2(\ind,L^2(\vn(\Gamma)))$ is an isometry. Property (\ref{eq:Helsonproperty}) of the Helson map is a consequence of (\ref{eq:intertwining}) and the fact that an orthogonal projection onto an invariant subspace commutes with the representation.
\end{proof}

\section{Left-invariant spaces in $\ell_2(\Gamma)$}\label{sec:LEFT}

In this section we study invariant subspaces of $\ell_2(\Gamma)$ under the left regular representation $\lambda$. As it is customary, we will call such spaces left-invariant.
We begin with the following basic fact.
\begin{lem}\label{lem:rightprojections}
An orthogonal projection onto the closed subspace $V \subset \ell_2(\Gamma)$ belongs to $\vn(\Gamma)$ if and only if $V$ is left-invariant.
\end{lem}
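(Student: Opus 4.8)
The plan is to reduce the statement to the standard fact that an orthogonal projection commutes with a self-adjoint family of operators precisely when its range reduces that family. First I would invoke the identity (\ref{eq:commutant}), which gives $\vn(\Gamma) = \{\lambda(\gamma) : \gamma \in \Gamma\}'$. Writing $\Proj_V$ for the orthogonal projection onto $V$, this means that $\Proj_V \in \vn(\Gamma)$ if and only if $\Proj_V$ commutes with $\lambda(\gamma)$ for every $\gamma \in \Gamma$. Thus the whole lemma amounts to showing that $\Proj_V \lambda(\gamma) = \lambda(\gamma) \Proj_V$ for all $\gamma$ if and only if $\lambda(\gamma) V \subset V$ for all $\gamma$.

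For the forward direction, assume $\Proj_V$ commutes with every $\lambda(\gamma)$. Then for $v \in V = \Ran(\Proj_V)$ one has $\lambda(\gamma) v = \lambda(\gamma)\Proj_V v = \Proj_V \lambda(\gamma) v \in \Ran(\Proj_V) = V$, so $V$ is left-invariant; this step is immediate.

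For the converse, assume $V$ is left-invariant. The key observation is that the generating family $\{\lambda(\gamma)\}_{\gamma \in \Gamma}$ is closed under adjoints, since $\lambda(\gamma)^* = \lambda(\gamma^{-1})$ and $\gamma^{-1}$ again ranges over $\Gamma$. I would use this to show that $V^\bot$ is also left-invariant: for $w \in V^\bot$ and $v \in V$ we have $\langle \lambda(\gamma) w, v\rangle = \langle w, \lambda(\gamma^{-1}) v\rangle = 0$ because $\lambda(\gamma^{-1}) v \in V$, whence $\lambda(\gamma) w \in V^\bot$. Given both invariances, for arbitrary $f \in \ell_2(\Gamma)$ I decompose $f = \Proj_V f + (\Id - \Proj_V) f$ into its components in $V$ and $V^\bot$; applying $\lambda(\gamma)$ keeps $\lambda(\gamma)\Proj_V f \in V$ and $\lambda(\gamma)(\Id - \Proj_V) f \in V^\bot$, so by uniqueness of the orthogonal decomposition $\Proj_V \lambda(\gamma) f = \lambda(\gamma) \Proj_V f$. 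Hence $\Proj_V$ commutes with each $\lambda(\gamma)$ and lies in $\vn(\Gamma)$.

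There is no serious obstacle here; the proof is a routine reducing-subspace argument. The only two points that require care are, first, that by (\ref{eq:commutant}) membership in $\vn(\Gamma)$ is tested against the generators $\lambda(\gamma)$ rather than against the whole algebra $\vnL(\Gamma)$, and second, that the passage to the orthogonal complement $V^\bot$ genuinely uses the self-adjointness of the family $\{\lambda(\gamma)\}_{\gamma \in \Gamma}$ (equivalently, that $V$ is invariant under $\lambda(\gamma^{-1})$ as well as $\lambda(\gamma)$), which is automatic because $\Gamma$ is a group.
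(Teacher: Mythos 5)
Your proof is correct and follows essentially the same route as the paper: reduce membership in $\vn(\Gamma)=\{\lambda(\gamma)\}'$ to commutation with each $\lambda(\gamma)$, get the easy direction from $\lambda(\gamma)\Proj_V v=\Proj_V\lambda(\gamma)v$, and for the converse show that $V^\bot$ is also left-invariant via $\lambda(\gamma)^*=\lambda(\gamma^{-1})$ before comparing the orthogonal decompositions. No gaps; your explicit remark that self-adjointness of the generating family is what makes the complement invariant is a nice touch, though the argument is the same.
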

\begin{proof}
By (\ref{eq:commutant})
$\Proj_V$ belongs to $\vn(\Gamma)$ if and only if \ $\Proj_V \lambda(\gamma) = \lambda(\gamma) \Proj_V$ for all $\gamma \in \Gamma$. Let us then first assume that $\lambda(\Gamma)V \subset V$. Then also $V^\bot$ is left-invariant, because for all $\gamma \in \Gamma$, $v \in V$, $v' \in V^\bot$ it holds
$$
\langle v , \lambda(\gamma) v'\rangle = \langle \lambda(\gamma)^*v, v'\rangle = \langle \lambda(\gamma^{-1})v, v'\rangle = 0
$$
so $\lambda(\gamma)v' \bot v$, and hence $\lambda(\Gamma)V^\bot \subset V^\bot$. Then,  for all $u \in \ell_2(\Gamma)$
$$
\Proj_V \lambda(\gamma) u = \Proj_V \lambda(\gamma) \Proj_V u + \Proj_V \lambda(\gamma) \Proj_{V^\bot} u = \lambda(\gamma) \Proj_V u ,
$$
and thus, $\Proj_V \in \vn(\Gamma)$.

Conversely, let $\Proj_V \in \vn(\Gamma)$. Then for all $v \in V$ we have $\lambda(\gamma) v = \lambda(\gamma) \Proj_V v = \Proj_V \lambda(\gamma) v \in V$. Hence, $\lambda(\Gamma)V \subset V$.
\end{proof}

\newpage

For the left regular representation a natural Helson map is provided in the following propositions. 

\begin{prop}\label{prop:Helsonleft}
A Helson map for the left regular representation is the group Fourier transform, that is $\iso : \ell_2(\Gamma) \to L^2(\vn(\Gamma))$ is given by
\begin{equation}\label{eq:Helsonleft}
\iso[f] = \F_\Gamma f = \sum_{\gamma \in \Gamma} f(\gamma) \rho(\gamma)^* \ , \quad f \in \ell_2(\Gamma) 
\end{equation}
where in this case the measure spaces $\M$ is taken to be a singelton. As a consequence, the bracket map for the left regular representation reads
\begin{equation}\label{eq:bracketleft}
[f,g] = (\F_\Gamma g)^* \F_\Gamma f \ , \quad f, g \in \ell_2(\Gamma). 
\end{equation}
\end{prop}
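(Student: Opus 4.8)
The plan is to verify directly that $\iso = \F_\Gamma$ satisfies the two requirements of Definition \ref{def:Helsonmap} when $\M$ is taken to be a single point of unit mass, so that $L^2((\M,\nu),L^2(\vn(\Gamma)))$ is canonically identified with $L^2(\vn(\Gamma))$ and the norm $\norm{\cdot}$ becomes $\|\cdot\|_2$. The first requirement is that $\F_\Gamma$ be an isometry, and this is exactly the content of the Plancherel theorem recalled in Subsection \ref{subsec:noncommutative}: since $\F_\Gamma \delta_\gamma = \rho(\gamma)^*$ and the family $\{\rho(\gamma)^*\}_{\gamma\in\Gamma}$ is, like $\{\rho(\gamma)\}_{\gamma\in\Gamma}$, an orthonormal basis of $L^2(\vn(\Gamma))$ (the reindexing $\gamma \mapsto \gamma^{-1}$ being a bijection of $\Gamma$), the map $\F_\Gamma$ carries the canonical orthonormal basis $\{\delta_\gamma\}_{\gamma\in\Gamma}$ of $\ell_2(\Gamma)$ onto an orthonormal basis, and is therefore a surjective isometry; equivalently $\|\F_\Gamma f\|_2^2 = \sum_{\gamma}|f(\gamma)|^2 = \|f\|_{\ell_2(\Gamma)}^2$.

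Second, I would check the intertwining property (\ref{eq:Helsonproperty}), namely $\F_\Gamma[\lambda(\gamma)f] = (\F_\Gamma f)\rho(\gamma)^*$. It suffices to verify this on the dense subspace of finitely supported $f$ and then extend by continuity, since both $\F_\Gamma$ and right multiplication by the unitary $\rho(\gamma)^*$ are bounded. Using $(\lambda(\gamma)f)(\gamma') = f(\gamma^{-1}\gamma')$, the reindexing $\gamma' = \gamma\gamma''$, and the fact that $\rho$ is a homomorphism, one computes $\F_\Gamma[\lambda(\gamma)f] = \sum_{\gamma'} f(\gamma^{-1}\gamma')\rho(\gamma')^* = \sum_{\gamma''} f(\gamma'')\rho(\gamma\gamma'')^* = \big(\sum_{\gamma''} f(\gamma'')\rho(\gamma'')^*\big)\rho(\gamma)^* = (\F_\Gamma f)\rho(\gamma)^*$, the only point to watch being that taking adjoints reverses the order, $\rho(\gamma\gamma'')^* = \rho(\gamma'')^*\rho(\gamma)^*$, so that $\rho(\gamma)^*$ factors out on the right, exactly matching the right action (\ref{eq:rightaction}). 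This establishes that $\F_\Gamma$ is a Helson map for $\lambda$.

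Finally, the bracket formula (\ref{eq:bracketleft}) follows with no extra work from Proposition \ref{prop:propertyii}: applied to the Helson map $\iso = \F_\Gamma$ it gives $[f,g] = \int_\M \isom{g}^*\isom{f}\,d\nu(x)$, and since $\M$ is a single point of unit mass this reduces to $[f,g] = (\F_\Gamma g)^* \F_\Gamma f$, with membership in $L^1(\vn(\Gamma))$ already guaranteed by that proposition through H\"older's inequality.

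I do not expect a genuine obstacle here: the statement is essentially an unwinding of definitions against the Plancherel theorem and Proposition \ref{prop:propertyii}. The only steps demanding care are confirming that $\{\rho(\gamma)^*\}_{\gamma\in\Gamma}$ is again an orthonormal basis (so that the isometry claim is legitimate) and keeping track of the order reversal under the adjoint in the index substitution, which is precisely what produces the right action by $\rho(\gamma)^*$ rather than a left action.
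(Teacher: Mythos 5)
Your proposal is correct and follows the same route as the paper: the isometry is Plancherel's theorem, the intertwining identity $\F_\Gamma[\lambda(\gamma)f]=(\F_\Gamma f)\rho(\gamma)^*$ is the same direct index-shift computation, and the bracket formula is read off from Proposition \ref{prop:propertyii} with $\M$ a singleton. The extra care you take with the orthonormality of $\{\rho(\gamma)^*\}_{\gamma\in\Gamma}$ and the density argument only makes explicit what the paper leaves implicit.
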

\begin{proof}
By Plancherel Theorem, we have that $\iso$ defined as in (\ref{eq:Helsonleft}) is a surjective isometry. We can check the Helson property (\ref{eq:Helsonproperty}) by direct computation, since
\begin{align}\label{eq:Fourierintertwining}
\F_\Gamma\lambda(\gamma)f & = \sum_{\gamma' \in \Gamma} \lambda(\gamma)f(\gamma') \rho(\gamma')^* = \sum_{\gamma' \in \Gamma} f(\gamma^{-1}\gamma') \rho(\gamma')^* = \sum_{\gamma'' \in \Gamma} f(\gamma'') \rho(\gamma\gamma'')^*\nonumber\\
& = \sum_{\gamma'' \in \Gamma} f(\gamma'') \rho(\gamma'')^*\rho(\gamma)^* = (\F_\Gamma f) \rho(\gamma)^*.
\end{align}
Then, (\ref{eq:bracketleft}) follows from Proposition \ref{prop:propertyii}.
\end{proof}
Analogously, the right regular representation $\rho$ is always dual integrable, and a Helson map $\iso : \ell_2(\Gamma) \to L^2(\vn(\Gamma))$ is provided by
$$
\iso[f] = \sum_{\gamma \in \Gamma} f(\gamma) \rho(\gamma) \ , \quad f \in \ell_2(\Gamma) .
$$

The following theorem characterizes the subspaces of $\ell_2(\Gamma)$ that are invariant under the left regular representation $\lambda$.

\begin{theo}\label{theo:Fourierinvariance}
Let $V \subset \ell_2(\Gamma)$ be a closed subspace. Then the following are equivalent
\begin{itemize}
\item[i)] $V$ is left-invariant;
\item[ii)] $\exists \ q \in \vn(\Gamma)$ orthogonal projection of $\ell_2(\Gamma)$ such that $\F_\Gamma (V) = qL^2(\vn(\Gamma))$. \end{itemize}
Moreover, in this case we have $q = \Proj_V$.
\end{theo}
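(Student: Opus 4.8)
The plan is to exploit that the group Fourier transform $\F_\Gamma : \ell_2(\Gamma) \to L^2(\vn(\Gamma))$ is a surjective isometry (Proposition \ref{prop:Helsonleft}) which, by \eqref{eq:Fourierintertwining}, intertwines the left regular representation with right multiplication, $\F_\Gamma \lambda(\gamma) f = (\F_\Gamma f)\rho(\gamma)^*$, and which also intertwines any projection $q \in \vn(\Gamma)$ acting on $\ell_2(\Gamma)$ with left multiplication on $L^2(\vn(\Gamma))$. Concretely, the first thing I would prove is the intertwining identity: for any orthogonal projection $q \in \vn(\Gamma)$ and any $g \in \ell_2(\Gamma)$,
\begin{equation*}
\F_\Gamma(qg) = q\,\F_\Gamma(g),
\end{equation*}
where on the left $q$ acts on $\ell_2(\Gamma)$ and on the right it acts by left multiplication on $L^2(\vn(\Gamma))$. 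For finitely supported $g$ one has $\F_\Gamma g \in \vn(\Gamma)$, so writing $q = \F_\Gamma \wh q$ and $qg = g \ast \wh q$ via \eqref{eq:Risconvolution}, the composition formula \eqref{eq:composition} applied with $F = q$ and $G = \F_\Gamma g$ gives $q\,\F_\Gamma g = \F_\Gamma(g \ast \wh q) = \F_\Gamma(qg)$. Since finitely supported sequences are dense and both sides depend continuously on $g$ ($\F_\Gamma$ is isometric and $q$ is a contraction on both spaces), the identity extends to all of $\ell_2(\Gamma)$. I expect this to be the main obstacle, precisely because \eqref{eq:composition} is stated for bounded operators in $\vn(\Gamma)$, whereas $\F_\Gamma g$ lies only in $L^2(\vn(\Gamma))$ in general, so the identity must first be established on the dense finitely supported subspace and then extended by continuity.

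For i) $\Rightarrow$ ii), if $V$ is left-invariant then Lemma \ref{lem:rightprojections} gives $\Proj_V \in \vn(\Gamma)$; setting $q = \Proj_V$ we have $V = q\,\ell_2(\Gamma)$, and the intertwining identity together with surjectivity of $\F_\Gamma$ yields
\begin{equation*}
\F_\Gamma(V) = \F_\Gamma(q\,\ell_2(\Gamma)) = q\,\F_\Gamma(\ell_2(\Gamma)) = q\,L^2(\vn(\Gamma)),
\end{equation*}
which is exactly ii) with $q = \Proj_V$.

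For ii) $\Rightarrow$ i), given such a $q$ every $f \in V$ satisfies $\F_\Gamma f = q\,\F_\Gamma f$ by Definition \ref{def:qL2}; using \eqref{eq:Fourierintertwining} and the fact that left and right multiplication commute by associativity,
\begin{equation*}
\F_\Gamma(\lambda(\gamma) f) = (\F_\Gamma f)\rho(\gamma)^* = q\,(\F_\Gamma f)\rho(\gamma)^* = q\,\F_\Gamma(\lambda(\gamma) f) \in q\,L^2(\vn(\Gamma)) = \F_\Gamma(V),
\end{equation*}
so injectivity of $\F_\Gamma$ forces $\lambda(\gamma) f \in V$, proving that $V$ is left-invariant.

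Finally, for the uniqueness assertion $q = \Proj_V$, I would observe that left multiplication by an orthogonal projection $q \in \vn(\Gamma)$ is itself an orthogonal projection of $L^2(\vn(\Gamma))$ with range $q\,L^2(\vn(\Gamma))$ (using $q^* = q = q^2$). On the other hand, the intertwining identity shows that left multiplication by $\Proj_V$ coincides with $\F_\Gamma \Proj_V \F_\Gamma^{-1}$, which is the orthogonal projection onto $\F_\Gamma(V)$ since $\F_\Gamma$ is unitary. Any $q$ as in ii) thus induces the orthogonal projection onto the same subspace $\F_\Gamma(V)$, so left multiplication by $q$ and by $\Proj_V$ agree as operators; evaluating both at the identity $I = \rho(\id) \in L^2(\vn(\Gamma))$ gives $q = q\,I = \Proj_V\,I = \Proj_V$.
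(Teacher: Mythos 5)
Your proof is correct and follows essentially the same route as the paper: the key identity $\F_\Gamma(qg)=q\,\F_\Gamma(g)$ obtained from the convolution/composition formulas, Lemma \ref{lem:rightprojections} for the forward direction, and the intertwining relation \eqref{eq:Fourierintertwining} for the converse. You are in fact slightly more careful than the paper in two spots — extending the key identity from finitely supported $g$ by density (the paper applies \eqref{eq:composition} directly even though it is stated only for elements of $\vn(\Gamma)$) and giving an explicit argument for $q=\Proj_V$ via the uniqueness of orthogonal projections onto $\F_\Gamma(V)$, which the paper asserts without proof.
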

\begin{proof}
Let us first prove that $i)$ implies $ii)$ Let $q = \Proj_V$, which belongs to $\vn(\Gamma)$ by Lemma \ref{lem:rightprojections}. By (\ref{eq:Risconvolution}) we have $q (f) = f \ast \wh{q}$ for all $f \in \ell_2(\Gamma)$. 
Thus, by \eqref{eq:composition}
\begin{equation}\label{eq:dummy2}
q (\F_\Gamma f)  = \sum_{\gamma \in \Gamma} f \ast \wh{q} \, (\gamma)\rho(\gamma)^*
= \sum_{\gamma \in \Gamma} q (f) (\gamma)\rho(\gamma)^* = \F_\Gamma (q(f)).
\end{equation}

Now, if $f \in V$, then $q(f) = f$ and by \eqref{eq:dummy2}, $\F_\Gamma f = q(\F_\Gamma f)$. So $\F_\Gamma (f) \in qL^2(\vn(\Gamma))$, which shows that $\F_\Gamma (V) \subset q L^2(\vn(\Gamma))$. 
Conversely, if $F \in q L^2(\vn(\Gamma))$, then $qF = F$. If $f = \wh{F}$, we then have that $q (\F_\Gamma f) = \F_\Gamma f$, so by (\ref{eq:dummy2}), $f = q(f) \in V$. Thus $F \in \F_\Gamma(V)$.

Let us prove that $ii)$ implies $i)$ Let $f \in V$. Then $\F_\Gamma f = q G$ for some  $G \in L^2(\vn(\Gamma))$. By (\ref{eq:Fourierintertwining}), we have that, for each $\gamma\in\Gamma$, $\F_\Gamma \lambda(\gamma)f = (\F_\Gamma f) \rho(\gamma)^* = q G \rho(\gamma)^* \in q L^2(\vn(\Gamma))$. This implies that $\lambda(\gamma)f \in V$ for all $\gamma \in \Gamma$.
\end{proof}

The following result extends to general discrete groups a classical result attributed to Srinivasan \cite{Sri64} and Wiener \cite{Wie32} (see also \cite[Corollary 3.9]{BR15}).
\begin{cor}\label{cor:Helson}
Let $W \subset L^2(\vn(\Gamma))$ be a closed subspace. Then
$W \rho(\gamma) \subset W \ \forall\,\gamma \in \Gamma$ if and only if there exists an orthogonal projection $q \in \vn(\Gamma)$ such that $W = q L^2(\vn(\Gamma))$.
\end{cor}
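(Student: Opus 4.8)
The plan is to transport Theorem~\ref{theo:Fourierinvariance} through the group Fourier transform. By Proposition~\ref{prop:Helsonleft} the map $\F_\Gamma : \ell_2(\Gamma) \to L^2(\vn(\Gamma))$ is a surjective isometry, hence an isometric isomorphism whose inverse sends $F$ to its convolution kernel $\wh{F}$. Given a closed subspace $W \subset L^2(\vn(\Gamma))$, I would set $V = \{\wh{F} \, : \, F \in W\} = \F_\Gamma^{-1}(W)$, which is closed in $\ell_2(\Gamma)$ because $\F_\Gamma$ is unitary, and which satisfies $\F_\Gamma(V) = W$.

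The key step is to recognize that right-$\rho$-invariance of $W$ corresponds exactly to left-invariance of $V$. For this I would use the intertwining relation (\ref{eq:Fourierintertwining}), namely $\F_\Gamma\lambda(\gamma)f = (\F_\Gamma f)\rho(\gamma)^*$, together with the observation that $\rho(\gamma)^* = \rho(\gamma^{-1})$, so that $\{\rho(\gamma)\}_{\gamma \in \Gamma}$ and $\{\rho(\gamma)^*\}_{\gamma \in \Gamma}$ are the same family. Concretely, for $f \in V$ and $\gamma \in \Gamma$ one has $\F_\Gamma(\lambda(\gamma)f) = (\F_\Gamma f)\rho(\gamma)^* = (\F_\Gamma f)\rho(\gamma^{-1})$; thus, if $W\rho(\gamma) \subset W$ for all $\gamma \in \Gamma$, then $(\F_\Gamma f)\rho(\gamma^{-1}) \in W$, whence $\lambda(\gamma)f \in V$, and $V$ is left-invariant. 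The converse is the same computation read backwards: for $F \in W$ write $F = \F_\Gamma f$ with $f \in V$, so that $F\rho(\gamma) = \F_\Gamma(\lambda(\gamma^{-1})f) \in W$ whenever $V$ is left-invariant.

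Having established this equivalence, I would simply invoke Theorem~\ref{theo:Fourierinvariance}: the subspace $V$ is left-invariant if and only if there is an orthogonal projection $q \in \vn(\Gamma)$ with $\F_\Gamma(V) = qL^2(\vn(\Gamma))$. Since $\F_\Gamma(V) = W$, this reads $W = qL^2(\vn(\Gamma))$, which is exactly the desired conclusion.

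I do not expect any serious obstacle, as the statement is obtained by conjugating Theorem~\ref{theo:Fourierinvariance} with the Fourier isometry. The only points requiring a little care are the closedness of $V$, which is immediate from the unitarity of $\F_\Gamma$, and the bookkeeping between $\rho(\gamma)$ and its adjoint $\rho(\gamma^{-1})$, which is precisely what makes the right-action invariance on the von Neumann algebra side match the left-regular invariance on $\ell_2(\Gamma)$.
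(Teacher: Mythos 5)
Your proposal is correct and follows essentially the same route as the paper: transport the subspace $W$ to $V = \F_\Gamma^{-1}(W) \subset \ell_2(\Gamma)$, use the intertwining relation (\ref{eq:Fourierintertwining}) to identify right-$\rho$-invariance of $W$ with left-invariance of $V$, and then apply Theorem \ref{theo:Fourierinvariance}. The only addition is your explicit remark that $\rho(\gamma)^* = \rho(\gamma^{-1})$, which the paper leaves implicit; this is a harmless and correct piece of bookkeeping.
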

\begin{proof}
By Theorem \ref{theo:Fourierinvariance}, we know that there exists an orthogonal projection $q\in \vn(\Gamma)$ such that  $W = q L^2(\vn(\Gamma))$ if and only if $W = \F_\Gamma V$ for some left-invariant  subspace $V \subset \ell_2(\Gamma)$. On the other hand, by (\ref{eq:Fourierintertwining}) we have that $\F_\Gamma \lambda(\gamma) f = (\F_\Gamma f)\rho(\gamma)^*$ for all $f \in \ell_2(\Gamma)$ and all $\gamma \in \Gamma$. Thus, for all $\gamma \in \Gamma$, we have that $\lambda(\gamma) v \in V$ if and only if $(\F_\Gamma v)\rho(\gamma)^* \in W$ for all $v\in V$.
\end{proof}

We now prove that every closed subspace of $\ell_2(\Gamma)$ which is invariant under the left regular representation is principal, and it can be generated by a Parseval frame gnerator.

\begin{prop}\label{prop:Parsevalframeleft}
Every left-invariant closed subspace $V \subset \ell_2(\Gamma)$ is principal, i.e. there exists $\psi \in \ell_2(\Gamma)$ such that
$$
V = \ol{\vsp\{\lambda(\gamma)\psi\}_{\gamma \in \Gamma}}^{\ell_2(\Gamma)}.
$$
Moreover, for $p = \wh{\Proj_V} \in \ell_2(\Gamma)$, the system $\{\lambda(\gamma) p \}_{\gamma \in \Gamma}$ is a Parseval frame for $V$.
\end{prop}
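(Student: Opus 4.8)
The plan is to transport the entire problem through the Fourier transform $\F_\Gamma$, which by Proposition \ref{prop:Helsonleft} is a surjective isometry from $\ell_2(\Gamma)$ onto $L^2(\vn(\Gamma))$ intertwining $\lambda(\gamma)$ with right multiplication by $\rho(\gamma)^*$, and then to invoke the elementary fact that the orthogonal projection of an orthonormal basis is a Parseval frame for the range. Since $V$ is left-invariant, Lemma \ref{lem:rightprojections} gives $\Proj_V \in \vn(\Gamma)$, so $\Proj_V$ has a convolution kernel and $p = \wh{\Proj_V} \in \ell_2(\Gamma)$ is the candidate generator. The two facts to record at the outset are $\F_\Gamma p = \Proj_V =: q$ (since $F = \F_\Gamma \wh{F}$) and, by Theorem \ref{theo:Fourierinvariance}, $\F_\Gamma(V) = q L^2(\vn(\Gamma))$.

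First I would apply $\F_\Gamma$ to the orbit $\{\lambda(\gamma)p\}_{\gamma \in \Gamma}$; by the intertwining identity (\ref{eq:Fourierintertwining}) this produces the family $\{q\rho(\gamma)^*\}_{\gamma \in \Gamma}$ inside $q L^2(\vn(\Gamma))$. Because $\F_\Gamma$ is unitary, it then suffices to prove that $\{q\rho(\gamma)^*\}_{\gamma \in \Gamma}$ is a Parseval frame for $q L^2(\vn(\Gamma))$. The key preliminary observation is that left multiplication by $q$ is the orthogonal projection of $L^2(\vn(\Gamma))$ onto $q L^2(\vn(\Gamma))$: it is idempotent since $q^2 = q$, and self-adjoint for $\langle \cdot, \cdot\rangle_2$ since $q^* = q$ and $\tau$ is tracial. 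Given that $\{\rho(\gamma)^*\}_{\gamma \in \Gamma}$ is an orthonormal basis of $L^2(\vn(\Gamma))$ (it coincides as a set with $\{\rho(\gamma)\}_{\gamma}$ because $\rho(\gamma)^* = \rho(\gamma^{-1})$), the frame property reduces to a one-line computation: for $G \in q L^2(\vn(\Gamma))$ one has $\langle G, q\rho(\gamma)^*\rangle_2 = \langle qG, \rho(\gamma)^*\rangle_2 = \langle G, \rho(\gamma)^*\rangle_2$, so summing squared moduli over $\gamma$ yields $\|G\|_2^2$ by Parseval's identity. Transporting this back through the isometry $\F_\Gamma$ gives $\sum_{\gamma \in \Gamma} |\langle f, \lambda(\gamma)p\rangle_{\ell_2}|^2 = \|f\|_{\ell_2}^2$ for every $f \in V$, which is exactly the Parseval frame condition.

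Finally, principality comes for free: a Parseval frame is automatically complete in the space for which it is a frame, so any $f \in V$ orthogonal to every $\lambda(\gamma)p$ has zero norm, forcing $\ol{\vsp\{\lambda(\gamma)p\}_{\gamma \in \Gamma}}^{\ell_2(\Gamma)} = V$; thus $\psi = p$ exhibits $V$ as a principal left-invariant subspace. The only step I expect to require genuine care is the verification that left multiplication by $q$ is truly the orthogonal projection onto $q L^2(\vn(\Gamma))$, together with the bookkeeping of adjoints in $\langle F_1, F_2\rangle_2 = \tau(F_2^* F_1)$; everything else is a direct transfer along the unitary $\F_\Gamma$ and the standard ``projection of an orthonormal basis is a Parseval frame'' lemma.
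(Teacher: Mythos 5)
Your argument is correct, but it takes a genuinely different route from the paper's. For the Parseval property the paper computes the bracket $[p,p]=(\F_\Gamma p)^*(\F_\Gamma p)=\Proj_V^*\Proj_V=\Proj_V$ and then invokes the external characterization of Parseval frame generators from \cite[Th. A]{BHP15a}, whereas you prove it from scratch on the Fourier side: you identify left multiplication $F\mapsto qF$ as the orthogonal projection of $L^2(\vn(\Gamma))$ onto $qL^2(\vn(\Gamma))$ and apply the elementary ``compression of an orthonormal basis is a Parseval frame for the range'' lemma to the basis $\{\rho(\gamma)^*\}_{\gamma\in\Gamma}$ (incidentally, traciality of $\tau$ is not needed there: $\langle F,qG\rangle_2=\tau(G^*q^*F)=\tau(G^*qF)=\langle qF,G\rangle_2$ already follows from $q^*=q$). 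This makes your proof self-contained. For principality the paper instead uses the convolution identity $f=\Proj_V f=f\ast p=\sum_{\gamma}f(\gamma)\lambda(\gamma)p$, which yields an explicit reconstruction of every $f\in V$ with coefficients $f(\gamma)$; you deduce completeness of the orbit in $V$ from the Parseval identity, which gives the inclusion $V\subset\ol{\vsp\{\lambda(\gamma)p\}_{\gamma\in\Gamma}}^{\ell_2(\Gamma)}$. For the reverse inclusion one must still check $\lambda(\gamma)p\in V$, i.e. $p\in V$; this is where the paper explicitly notes $\Proj_V=\Proj_V\cdot\Proj_V\in\Proj_V L^2(\vn(\Gamma))=\F_\Gamma(V)$. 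You only record this implicitly when you say the family $\{q\rho(\gamma)^*\}$ lies ``inside $qL^2(\vn(\Gamma))$'', so it is worth making that one line explicit; with it, both inclusions are in place and the proof is complete. In summary, the paper's version buys an explicit expansion formula and consistency with its bracket-map machinery, while yours buys independence from \cite[Th. A]{BHP15a} and a transparent Hilbert-space mechanism.
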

\begin{proof}
Let $V \subset \ell_2(\Gamma)$ be left-invariant. Then, for $f \in V$, using (\ref{eq:Risconvolution})
$$
f = \Proj_V f = f \ast p = \sum_{\gamma \in \Gamma} f(\gamma) \lambda(\gamma)p \in \ol{\vsp\{\lambda(\gamma) p \}_{\gamma \in \Gamma}}^{\ell_2(\Gamma)} ,
$$
which proves that $V \subset \ol{\vsp\{\lambda(\gamma) p \}_{\gamma \in \Gamma}}^{\ell_2(\Gamma)}$. Now, observe that $\Proj_V \in \Proj_V L^2(\vn(\Gamma))$ which coincides with $\F_\Gamma V$ by Theorem \ref{theo:Fourierinvariance}. Then,  $p \in V$ and thus $\ol{\vsp\{\lambda(\gamma) p \}_{\gamma \in \Gamma}}\subset V$, proving the other inclusion.
Then,  we can choose $\psi = p$. 

 Let us see now that the system $\{\lambda(\gamma) p \}_{\gamma \in \Gamma}$ is a Parseval frame for $V$.
For this, note that  by (\ref{eq:bracketleft}) in Proposition \ref{prop:Helsonleft}, the bracket map for $\lambda$ is given by $[f,g] = (\F_\Gamma g)^* (\F_\Gamma f), f,g, \in \ell_2(\Gamma)$. Then, since $\F_\Gamma p =  \Proj_V$, one has $[p,p] = \Proj_V^* \Proj_V = \Proj_V$. So, by \cite[Th. A]{BHP15a}, the system $\{\lambda(\gamma)p\}_{\gamma \in \Gamma}$ is a Parseval frame.
\end{proof}

\section{Invariant subspaces of unitary representations}\label{sec:INVARIANT}

The following result gives a characterization of invariant subspaces in terms in the invariance of its image under a Helson map.
 
\begin{theo}\label{theo:multiplicatively}
Let $(\Gamma,\Pi,\Hil)$ be a dual integrable triple with associated Helson map $\iso$, and let $V \subset \Hil$ be a closed subspace. Then, the following are equivalent
\begin{itemize}
\item[i)] $V$ is $\Pi$-invariant
\item[ii)] $\iso[V]\rho(\gamma) \subset \iso[V]$ for all $\gamma \in \Gamma$
\item[iii)] $\iso[V]F \subset \iso[V]$ for all $F \in \vn(\Gamma)$
\end{itemize}
\end{theo}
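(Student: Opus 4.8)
The plan is to prove the two easy equivalences i) $\Leftrightarrow$ ii) and the trivial implication iii) $\Rightarrow$ ii) directly, and to concentrate the real work on ii) $\Rightarrow$ iii). Throughout I use that, since $\iso$ is an isometry and $V$ is closed, the image $\iso[V]$ is a closed subspace of $L^2((\M,\nu),L^2(\vn(\Gamma)))$, and that $\iso$ is injective, so $\iso[\eta]\in\iso[V]$ forces $\eta\in V$.

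For i) $\Rightarrow$ ii), fix $\gamma\in\Gamma$ and $\varphi\in V$. Since $\rho$ is unitary, $\rho(\gamma)=\rho(\gamma^{-1})^*$, so the Helson property (\ref{eq:Helsonproperty}) gives $\iso[\varphi]\rho(\gamma)=\iso[\varphi]\rho(\gamma^{-1})^*=\iso[\Pi(\gamma^{-1})\varphi]$; as $\Pi(\gamma^{-1})\varphi\in V$, this lies in $\iso[V]$. Reading the same identity in reverse proves ii) $\Rightarrow$ i): from $\iso[\varphi]\rho(\gamma)\in\iso[V]$ and $\iso[\varphi]\rho(\gamma)=\iso[\Pi(\gamma^{-1})\varphi]$, injectivity yields $\Pi(\gamma^{-1})\varphi\in V$, and letting $\gamma$ run over $\Gamma$ gives $\Pi$-invariance. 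Finally iii) $\Rightarrow$ ii) is immediate since each $\rho(\gamma)\in\vn(\Gamma)$.

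It remains to prove ii) $\Rightarrow$ iii), which is the heart of the matter. Fix $\varphi\in V$, write $\varPhi=\iso[\varphi]\in\iso[V]$, and fix $F\in\vn(\Gamma)$; I must show $\varPhi F\in\iso[V]$. Invariance under all $\rho(\gamma)$ together with linearity shows $\varPhi G\in\iso[V]$ for every $G$ in the $*$-algebra $\vsp\{\rho(\gamma)\}_{\gamma\in\Gamma}$. The idea is then to approximate $F$ by such $G$ and pass to the limit using that $\iso[V]$ is closed. Plain operator-norm approximation is unavailable, so I would invoke the Kaplansky density theorem: since $\Gamma$ is countable, $\ell_2(\Gamma)$ is separable, and there is a sequence $F_n\in\vsp\{\rho(\gamma)\}_{\gamma\in\Gamma}$ with $\|F_n\|_\infty\le\|F\|_\infty$ converging to $F$ in the strong-$*$ operator topology, so that both $F_n\to F$ and $F_n^*\to F^*$ strongly, with a uniform bound.

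The main obstacle is to upgrade this bounded strong-$*$ convergence of the $F_n$ to norm convergence of $\varPhi F_n\to\varPhi F$ in $L^2((\M,\nu),L^2(\vn(\Gamma)))$. I would argue fiberwise: for a.e. $x$, setting $G_n=F_n-F$ and using the tracial property,
\begin{equation*}
\|\varPhi(x)\,G_n\|_2^2=\tau\big(G_n^*\,|\varPhi(x)|^2\,G_n\big)=\tau\big(|\varPhi(x)|^2\,G_nG_n^*\big),
\end{equation*}
where $|\varPhi(x)|^2\in L^1(\vn(\Gamma))$. Since $G_n,G_n^*\to0$ in the strong operator topology with a uniform norm bound, $G_nG_n^*\to0$ in the $\sigma$-weak topology, and because $|\varPhi(x)|^2\in L^1(\vn(\Gamma))$ the functional $B\mapsto\tau(|\varPhi(x)|^2B)$ is normal; hence the right-hand side tends to $0$ for a.e. $x$. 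The pointwise bound $\|\varPhi(x)\,G_n\|_2\le 2\|F\|_\infty\,\|\varPhi(x)\|_2$, which is square-integrable over $\M$ because $\norm{\varPhi}^2=\int_\M\|\varPhi(x)\|_2^2\,d\nu(x)<\infty$, lets me apply dominated convergence to conclude $\norm{\varPhi F_n-\varPhi F}\to0$. Each $\varPhi F_n\in\iso[V]$ and $\iso[V]$ is closed, so $\varPhi F\in\iso[V]$, completing ii) $\Rightarrow$ iii) and with it the chain of equivalences. The delicate points, and where I would be most careful, are the legitimacy of the Kaplansky approximation inside $\vsp\{\rho(\gamma)\}_{\gamma\in\Gamma}$ and the normality argument turning $\sigma$-weak convergence of $G_nG_n^*$ into convergence of the traces.
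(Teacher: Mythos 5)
Your proof is correct, but it takes a genuinely different route from the paper's for the nontrivial implication. The paper proves $i)\Rightarrow iii)$ by passing through the principal invariant subspace machinery: for $\psi\in V$ it establishes the identity $\iso\big[S_\psi^{-1}(s_{[\psi,\psi]}F)\big]=\iso[\psi]F$ via the isometry $S_\psi:\psis{\psi}\to L^2(\vn(\Gamma),[\psi,\psi])$ of Proposition \ref{prop:isometry}, approximating $F$ by trigonometric polynomials $F_n$ with $F_n^*\to F^*$ strongly and controlling both sides in the weighted norm $\|\cdot\|_{2,[\psi,\psi]}$ (using $\norm{\iso[\psi](F-F_n)}=\|F-F_n\|_{2,[\psi,\psi]}$ from Proposition \ref{prop:propertyii}); this exhibits $\iso[\psi]F$ as the image of an explicit element of $\psis{\psi}\subset V$. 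You instead prove $ii)\Rightarrow iii)$ directly, observing that $\iso[V]$ is a closed subspace stable under right multiplication by the $*$-algebra $\vsp\{\rho(\gamma)\}_{\gamma\in\Gamma}$, and that right multiplication by a general $F\in\vn(\Gamma)$ is reached by a Kaplansky-bounded strong-$*$ approximating sequence, with the fiberwise normality-plus-dominated-convergence argument supplying $\norm{\varPhi F_n-\varPhi F}\to 0$; closedness of $\iso[V]$ finishes. All the delicate steps you flag are legitimate: the span of $\{\rho(\gamma)\}$ is a unital $*$-subalgebra whose SOT closure is $\vn(\Gamma)$, separability of $\ell_2(\Gamma)$ yields sequences, $|\varPhi(x)|^2\in L^1(\vn(\Gamma))$ induces a normal functional, and $\|\varPhi(x)G_n\|_2\le\|G_n\|_\infty\|\varPhi(x)\|_2$ gives the integrable dominant. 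Your argument is more self-contained for this theorem, needing neither $S_\psi$ nor the weighted spaces; the paper's detour has the advantage of producing the explicit identity (\ref{eq:crucial}), which is reused later to prove Proposition \ref{prop:BDR} and Corollary \ref{cor:principal}, so with your proof that identity would still have to be established separately.
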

\begin{proof}
The equivalence of $i)$ and $ii)$ is a direct consequence of the definition of Helson map, while $iii) \Rightarrow ii)$ is trivial.
We only need to prove $i) \Rightarrow iii)$ To see this, let us first see that
\begin{equation}\label{eq:crucial}
\iso\Big[S_\psi^{-1}(s_{[\psi,\psi]} F)\Big] = \iso[\psi] F
\end{equation}
for every $\psi \in \Hil$ and every $F \in \vn(\Gamma)$, where $S_\psi$ is the isometry given by Proposition \ref{prop:isometry}. To see this, observe first that (\ref{eq:crucial}) holds for trigonometric polynomials as a consequence of (\ref{eq:Helsonproperty}). Let then $F \in \vn(\Gamma)$ and let $\{F_n\}_{n \in \N}$ be a sequence of trigonometric polynomials such that $\{F_n^*\}_{n \in \N}$ converges strongly to $F^*$, i.e.
$$
\|F_n^* u - F^* u\|_{\ell_2(\Gamma)} \to 0, \quad \forall \, u \in \ell_2(\Gamma).
$$
Observe that such a sequence always exists because $\vn(\Gamma)$ coincides with the SOT-closure of trigonometric polynomials by von Neumann's Double Commutant Theorem (see e.g. \cite{Con00}). This implies that for all $\psi \in \Hil$
\begin{equation}\label{eq:convergence}
\|F_n - F\|_{2,[\psi,\psi]} \to 0 .
\end{equation}
Indeed, by definition of the weighted norm we have
\begin{align*}
\|F_n - F\|_{2,[\psi,\psi]}^2 & = \|[\psi,\psi]^\frac12 (F_n - F)\|_2^2  = \tau((F_n - F) (F_n - F)^* [\psi,\psi])\\
& = \langle (F_n - F)^* [\psi,\psi] \delta_\id, (F_n - F)^* \delta_\id\rangle_{\ell_2(\Gamma)}\\
& \leq \|(F_n - F)^* [\psi,\psi] \delta_\id\|_{\ell_2(\Gamma)} \|(F_n - F)^* \delta_\id\|_{\ell_2(\Gamma)}
\end{align*}
where $[\psi,\psi] \delta_\id \in \ell_2(\Gamma)$ because the domain of $[\psi,\psi] \in L^1(\vn(\Gamma)$ contains finite sequences (see e.g. \cite[Section 2]{BHP15a}). Then (\ref{eq:convergence}) follows because $\{F_n^*\}_{n \in \N}$ converges strongly to $F^*$. Now, by Proposition \ref{prop:isometry}, we have
\begin{equation}\label{eq:dummyisometry}
\|S_\psi^{-1}(s_{[\psi,\psi]} F) - S_\psi^{-1}(s_{[\psi,\psi]} F_n) \|_\Hil = \| F - F_n\|_{2,[\psi,\psi]} 
\end{equation}
for all $\psi\in\Hil$ and thus (\ref{eq:convergence}) implies that $S_\psi^{-1}(s_{[\psi,\psi]} F_n)$ converges to $S_\psi^{-1}(s_{[\psi,\psi]} F)$ in $\Hil$. As a consequence, since $\iso$ is continuous, we obtain 
$$
\norm{\iso[S_\psi^{-1}(s_{[\psi,\psi]} F_n)] - \iso[S_\psi^{-1}(s_{[\psi,\psi]} F)]} \to 0 \quad \forall \, \psi \in \Hil.
$$
Since $\iso[S_\psi^{-1}(s_{[\psi,\psi]} F_n)] = \iso[\psi]F_n$, the identity (\ref{eq:crucial}) is proved by showing that $\iso[\psi]F_n$ converges to $\iso[\psi]F$ in $L^2\big((\M,\nu),L^2(\vn(\Gamma))\big)$. Now we have
\begin{align}\label{eq:weightedconvergence}
& \norm{\iso[\psi]F - \iso[\psi]F_n}^2 = \tau \left(\int_\M |\isom{\psi}(F - F_n)|^2 d\nu(x)\right)\nonumber\\
& = \tau \left(|(F - F_n)^*|^2\int_\M |\isom{\psi}|^2 d\nu(x)\right) = \|F - F_n\|^2_{2,[\psi,\psi]} \, ,
\end{align}
where the last identity is due to Proposition \ref{prop:propertyii}. Therefore convergence is provided by (\ref{eq:convergence}). 

Assume that $V$ is $\Pi$-invariant, and take $\psi \in V$ and $F \in \vn(\Gamma)$. Then, by (\ref{eq:crucial}) and Proposition \ref{prop:isometry}, we have 
\begin{displaymath}
\iso[\psi] F = \iso[S_\psi^{-1}(s_{[\psi,\psi]} F)] \in \iso[\psis{\psi}] \subset \iso[V]. \qedhere
\end{displaymath}
\end{proof}

We observe that  a subspace $M$ of $L^2\big((\M,\nu),L^2(\vn(\Gamma))\big)$ satisfying  condition $iii)$ in Theorem \ref{theo:multiplicatively} is what in the abelian case is called {\it multiplicatively invariant} space (see e.g. \cite{BR15}). Then, Theorem \ref{theo:multiplicatively} is a version of \cite[Theorem 3.8]{BR15} in the noncommutative setting, for a discrete  group and general representations.

The next corollary follows directly from the properties of a Helson map.

\begin{cor}
Let $(\Gamma,\Pi,\Hil)$ be a dual integrable triple with associated Helson map $\iso$, and let $V \subset \Hil$ be a $\Pi$-invariant subspace generated by $\{\psi_j\}_{j \in \ind} \subset \Hil$, that is
$$
V = \ol{\vsp\{\Pi(\gamma)\psi_j : j \in \ind, \gamma \in \Gamma}\}^\Hil.
$$
Then
$$
\iso[V] = \ol{\vsp\{\iso[\psi_j]\rho(\gamma) : j \in \ind, \gamma \in \Gamma\}}^{L^2\big((\M,\nu),L^2(\vn(\Gamma))\big)}.
$$
\end{cor}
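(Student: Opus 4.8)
The plan is to exploit the fact that a Helson map is an isometry (Definition \ref{def:Helsonmap}), hence a homeomorphism onto its closed range, and that it is linear; consequently it carries the operation of taking closed linear spans to itself. Concretely, I would first establish the elementary principle that for any subset $W \subset \Hil$ one has
\[
\iso\big[\ol{\vsp\{W\}}^\Hil\big] = \ol{\vsp\{\iso[w] : w \in W\}}^{L^2\big((\M,\nu),L^2(\vn(\Gamma))\big)},
\]
and then apply it to the generating family $W = \{\Pi(\gamma)\psi_j : j \in \ind, \gamma \in \Gamma\}$, for which $V = \ol{\vsp\{W\}}^\Hil$ holds by hypothesis.

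First I would record the closure principle. Since $\iso$ is linear and satisfies $\norm{\iso[\varphi]} = \|\varphi\|_\Hil$, it is bounded and injective, so continuity gives the inclusion $\iso[\ol{\vsp\{W\}}] \subseteq \ol{\vsp\{\iso[w]\}}$. For the reverse inclusion, any element of the right-hand closure is a limit of a sequence in $\vsp\{\iso[w]\}$; because $\iso$ is isometric, the preimages of the terms of this sequence form a Cauchy sequence in $\vsp\{W\}$, whose limit lies in $\ol{\vsp\{W\}}$ and whose image under $\iso$ is the prescribed limit. This is the only point where a (routine) convergence argument is used. Next, by linearity of $\iso$ together with the defining Helson property \eqref{eq:Helsonproperty}, the image of each generator is
\[
\iso[\Pi(\gamma)\psi_j] = \iso[\psi_j]\rho(\gamma)^* , \qquad j \in \ind,\ \gamma \in \Gamma,
\]
so that $\vsp\{\iso[w] : w \in W\} = \vsp\{\iso[\psi_j]\rho(\gamma)^* : j \in \ind, \gamma \in \Gamma\}$.

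Finally I would observe that $\rho$ is a unitary representation, so $\rho(\gamma)^* = \rho(\gamma^{-1})$; as $\gamma$ ranges over $\Gamma$, the family $\{\rho(\gamma)^*\}_{\gamma \in \Gamma}$ coincides with $\{\rho(\gamma)\}_{\gamma \in \Gamma}$. Reindexing $\gamma \mapsto \gamma^{-1}$ therefore leaves the spanning set unchanged, and combining this with the closure principle yields exactly
\[
\iso[V] = \ol{\vsp\{\iso[\psi_j]\rho(\gamma) : j \in \ind, \gamma \in \Gamma\}}^{L^2\big((\M,\nu),L^2(\vn(\Gamma))\big)} .
\]
There is no genuine obstacle: the statement is a formal consequence of linearity, the intertwining relation \eqref{eq:Helsonproperty}, and the reindexing remark. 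The only mild care needed is in the closure principle, where it is the isometry property of $\iso$ (rather than mere boundedness) that makes closures correspond exactly, so that no extra limit points are created or lost when passing through $\iso$.
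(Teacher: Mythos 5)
Your proof is correct and follows exactly the route the paper intends: the paper gives no written proof, stating only that the corollary ``follows directly from the properties of a Helson map,'' and your argument---isometries exchange closed linear spans, the intertwining relation \eqref{eq:Helsonproperty} identifies the images of the generators, and the substitution $\gamma \mapsto \gamma^{-1}$ replaces $\rho(\gamma)^*$ by $\rho(\gamma)$---is precisely the direct verification being alluded to.
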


The following result gives a characterization of the elements belonging to $\psis{\psi}$ in terms of a multiplier that belongs to $L^2(\vn(\Gamma),[\psi,\psi])$.
This extends to the noncommutative setting  \cite[Th. 2.14]{BDR94b}, that is one of the fundamental results in the theory of shift-invariant spaces.
\begin{prop}\label{prop:BDR}
Let $(\Gamma,\Pi,\Hil)$ be a dual integrable triple with  associated Helson map $\iso$ and let $\psi \in \Hil$. Then the following hold:
\begin{itemize}
\item[i)] the mapping $F \mapsto \iso[\psi]F$ from $\hh([\psi,\psi])$ to $L^2\big((\M,\nu),L^2(\vn(\Gamma))\big)$ can be extended by density to an isometry on the whole $L^2(\vn(\Gamma),[\psi,\psi])$;
\item[ii)] $\varphi \in \psis{\psi}$ if and only if there exists $F \in L^2(\vn(\Gamma),[\psi,\psi])$ satisfying
$$
\iso[\varphi] = \iso[\psi]F
$$
and in this case one has $[\varphi,\psi] = [\psi,\psi] F$.
\end{itemize}
\end{prop}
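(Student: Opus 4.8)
The plan is to reduce both parts to a density argument, using the surjective isometry $S_\psi$ of Proposition \ref{prop:isometry} together with the identity (\ref{eq:crucial}) already established while proving Theorem \ref{theo:multiplicatively}. For part i), I would start from the observation that for $F \in \hh([\psi,\psi]) \subset \vn(\Gamma)$ the computation performed in (\ref{eq:weightedconvergence}) yields at once
$$
\norm{\iso[\psi]F}^2 = \tau\Big(|F^*|^2 \int_\M |\isom{\psi}|^2 \, d\nu(x)\Big) = \tau\big(|F^*|^2 [\psi,\psi]\big) = \|F\|_{2,[\psi,\psi]}^2,
$$
where the middle equality uses the bracket formula (\ref{eq:Helsonbracket}). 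Hence $F \mapsto \iso[\psi]F$ is isometric on $\hh([\psi,\psi])$, and since $L^2(\vn(\Gamma),[\psi,\psi])$ is by definition the completion of $\hh([\psi,\psi])$ in the $\|\cdot\|_{2,[\psi,\psi]}$ norm, this map extends by density to an isometry on the whole weighted space, which is part i).

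For the equivalence in part ii), the first step is to upgrade (\ref{eq:crucial}) to an identity on all of $L^2(\vn(\Gamma),[\psi,\psi])$. For $F \in \hh([\psi,\psi])$ one has $s_{[\psi,\psi]}F = F$, so (\ref{eq:crucial}) reads $\iso\big[S_\psi^{-1}F\big] = \iso[\psi]F$. Both sides are continuous on $L^2(\vn(\Gamma),[\psi,\psi])$ --- the left because $\iso \circ S_\psi^{-1}$ is a composition of the isometry $S_\psi^{-1}$ with the Helson map, the right by part i) --- and they agree on the dense subspace $\hh([\psi,\psi])$, so they coincide everywhere. With this identity available, the equivalence follows immediately: if $\varphi \in \psis{\psi}$ then $F := S_\psi[\varphi]$ satisfies $\iso[\varphi] = \iso\big[S_\psi^{-1}F\big] = \iso[\psi]F$; conversely, if $\iso[\varphi] = \iso[\psi]F = \iso\big[S_\psi^{-1}F\big]$ for some $F \in L^2(\vn(\Gamma),[\psi,\psi])$, then injectivity of the isometry $\iso$ forces $\varphi = S_\psi^{-1}F \in \psis{\psi}$.

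It remains to identify the bracket. For $F \in \hh([\psi,\psi])$ this is direct: substituting $\isom{\varphi} = \isom{\psi}F$ into (\ref{eq:Helsonbracket}) and pulling the $x$-independent factor $F$ out of the integral gives $[\varphi,\psi] = \big(\int_\M \isom{\psi}^*\isom{\psi}\,d\nu(x)\big)F = [\psi,\psi]F$. The delicate point --- which I expect to be the main obstacle --- is to extend this to an arbitrary $F \in L^2(\vn(\Gamma),[\psi,\psi])$, since such $F$ need not belong to $\vn(\Gamma)$ and the product $[\psi,\psi]F$ must first be given meaning through the factorization $[\psi,\psi]F = [\psi,\psi]^{1/2}\,\omega(F)$, where $\omega(F) = [\psi,\psi]^{1/2}F \in s_{[\psi,\psi]}L^2(\vn(\Gamma))$ is the isometric image furnished by Proposition \ref{prop:weightmap} and Remark \ref{rem:extensionomega}. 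To close the argument I would choose $F_n \in \hh([\psi,\psi])$ with $F_n \to F$ in $\|\cdot\|_{2,[\psi,\psi]}$ and set $\varphi_n := S_\psi^{-1}F_n \to \varphi$ in $\Hil$. The left-hand sides converge since $\varphi \mapsto [\varphi,\psi]$ is continuous into $L^1(\vn(\Gamma))$ (by the bound $\|[\varphi,\psi]\|_1 \leq \|\varphi\|_\Hil\|\psi\|_\Hil$ obtained in the proof of Proposition \ref{prop:propertyii}), while the right-hand sides converge by H\"older's inequality, since $\|[\psi,\psi]F_n - [\psi,\psi]F\|_1 \leq \|[\psi,\psi]^{1/2}\|_2\,\|\omega(F_n)-\omega(F)\|_2 = \|\psi\|_\Hil\,\|F_n - F\|_{2,[\psi,\psi]} \to 0$. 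Passing to the limit in the identity $[\varphi_n,\psi] = [\psi,\psi]F_n$ then yields $[\varphi,\psi] = [\psi,\psi]F$.
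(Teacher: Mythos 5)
Your proposal is correct and follows essentially the same route as the paper: part i) via the isometry identity $\norm{\iso[\psi]F}=\|F\|_{2,[\psi,\psi]}$ from (\ref{eq:weightedconvergence}), and part ii) by upgrading (\ref{eq:crucial}) to all of $L^2(\vn(\Gamma),[\psi,\psi])$ by density and then invoking the surjective isometry $S_\psi$ together with injectivity of $\iso$. The only difference is in the final bracket identity $[\varphi,\psi]=[\psi,\psi]F$, where the paper simply pulls $F$ out of the integral while you justify this limit carefully via $\omega(F)$ and H\"older's inequality; your extra step is sound and fills in a detail the paper leaves implicit.
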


\begin{proof}
In order to see $i)$, it is enough to note that, by (\ref{eq:weightedconvergence}), we have that 
$\norm{\iso[\psi] F}^2 = \|F\|_{2,[\psi,\psi]}^2$ for all $F \in \hh([\psi,\psi])$. Therefore, the conclusion follows.

Let us then prove $ii)$. Observe first that what we have just proved allows us to extend (\ref{eq:crucial}) to
\begin{equation}\label{eq:crucialweight}
\iso[S_\psi^{-1} F] = \iso[\psi] F \quad \forall \ \psi \in \Hil \, , \ F \in L^2(\vn(\Gamma),[\psi,\psi]) .
\end{equation}
Indeed, for $\{F_n\}_{n \in \N} \subset \hh([\psi,\psi])$ a sequence converging to $F \in L^2(\vn(\Gamma),[\psi,\psi])$, we know by (\ref{eq:crucial}) that
$$
\iso[S_\psi^{-1} F_n] = \iso[\psi] F_n \quad \forall \ n \in \N
$$
and, by (\ref{eq:weightedconvergence}), we have that the right hand side converges to $\iso[\psi] F$. By the continuity of $\iso$, in order to show (\ref{eq:crucialweight}) we then need only to prove that $\{S_\psi^{-1} F_n\}_{n \in \N}$ converges to $S_\psi^{-1} F$ in $\Hil$, which is true by (\ref{eq:dummyisometry}).

Now, by Proposition \ref{prop:isometry}, we have that (\ref{eq:crucialweight}) implies that $\varphi \in \psis{\psi}$ if and only if there exists $F \in L^2(\vn(\Gamma),[\psi,\psi])$ satisfying $\iso[\varphi] = \iso[\psi]F$.

As a consequence, by using (\ref{eq:Helsonbracket}), we have that
\[
[\varphi,\psi] = \int_\M \iso[\psi](x)^*\iso[\varphi](x) dx  = \Big(\int_\M \iso[\psi](x)^*\iso[\psi](x) dx \Big) F = [\psi,\psi] F. \qedhere
\]
\end{proof}

Proposition \ref{prop:BDR} extends to finitely generated invariant spaces as follows, generalizing \cite[Theorem 1.7]{BDR94a}
\begin{cor}\label{cor:finitely}
Let $(\Gamma,\Pi,\Hil)$ be a dual integrable triple with associated Helson map $\iso$, and let $V \subset \Hil$ be a $\Pi$-invariant subspace generated by the finite family $\{\psi_j\}_{j = 1}^k \subset \Hil$, that is
$$
V = \ol{\vsp\{\Pi(\gamma)\psi_j : j \in \{1,\dots,k\}, \gamma \in \Gamma\}}^\Hil.
$$
If, for each $j \in \{1,\dots,k\}$, there exists $F_j \in L^2(\vn(\Gamma),[\psi_j,\psi_j])$ such that
\begin{equation}\label{eq:sumclosed}
\iso[\varphi] = \sum_{j=1}^k \iso[\psi_j]F_j ,
\end{equation}
then $\varphi \in V$. Conversely, if $\displaystyle\sum_{j = 1}^k\langle \psi_j \rangle_\Gamma$ is closed and $\varphi \in V$, then there exists $F_j \in L^2(\vn(\Gamma),[\psi_j,\psi_j])$ such that (\ref{eq:sumclosed}) holds.
\end{cor}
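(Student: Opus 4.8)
The plan is to reduce both implications to Proposition \ref{prop:BDR}, applied separately to each generator $\psi_j$, and to exploit the linearity and injectivity of the Helson map $\iso$ together with the closedness hypothesis. The starting observation, used throughout, is that by definition $V = \ol{\vsp\{\Pi(\gamma)\psi_j\}}^\Hil = \ol{\sum_{j=1}^k \psis{\psi_j}}^\Hil$, since the algebraic span of the orbits is dense in the algebraic sum of the principal subspaces, and each $\psis{\psi_j}$ is contained in the closed space $V$.

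For the sufficiency direction, I would begin from the hypothesis $\iso[\varphi] = \sum_{j=1}^k \iso[\psi_j] F_j$ with $F_j \in L^2(\vn(\Gamma),[\psi_j,\psi_j])$. By the extended identity (\ref{eq:crucialweight}) established in the proof of Proposition \ref{prop:BDR}, each summand satisfies $\iso[\psi_j] F_j = \iso[S_{\psi_j}^{-1} F_j]$, where $\varphi_j := S_{\psi_j}^{-1} F_j \in \psis{\psi_j} \subset V$ by Proposition \ref{prop:isometry}. Using linearity of $\iso$ one then gets $\iso[\varphi] = \iso\big[\sum_{j=1}^k \varphi_j\big]$, and since $\iso$ is an isometry, hence injective, this forces $\varphi = \sum_{j=1}^k \varphi_j$. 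As this is a finite sum of elements of $V$, we conclude $\varphi \in V$.

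For the necessity direction, I would invoke the closedness hypothesis to upgrade the topological sum to an algebraic one: if $\sum_{j=1}^k \psis{\psi_j}$ is closed, then by the starting observation $V = \sum_{j=1}^k \psis{\psi_j}$ as an algebraic sum. Hence any $\varphi \in V$ admits a decomposition $\varphi = \sum_{j=1}^k \varphi_j$ with $\varphi_j \in \psis{\psi_j}$. Applying Proposition \ref{prop:BDR}(ii) to each $\varphi_j \in \psis{\psi_j}$ yields $F_j \in L^2(\vn(\Gamma),[\psi_j,\psi_j])$ with $\iso[\varphi_j] = \iso[\psi_j] F_j$, and summing over $j$ using the linearity of $\iso$ produces exactly (\ref{eq:sumclosed}).

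The main obstacle is entirely located in the necessity direction, and is precisely the role of the closedness hypothesis. Without it, a general $\varphi \in V$ lies only in the \emph{closure} of $\sum_j \psis{\psi_j}$ and need not be expressible as a finite sum $\sum_j \varphi_j$ with $\varphi_j \in \psis{\psi_j}$; the term-by-term application of Proposition \ref{prop:BDR} would then be unavailable, and one could at best approximate $\iso[\varphi]$ rather than represent it. Closedness is exactly what guarantees that the algebraic sum already equals $V$, making the finite decomposition legitimate. All remaining steps are routine consequences of the linearity and injectivity of $\iso$ and of Proposition \ref{prop:BDR}.
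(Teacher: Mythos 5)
Your proof is correct and follows essentially the same route as the paper: both directions reduce to Proposition \ref{prop:BDR} applied generator by generator, with the closedness hypothesis used exactly to identify $V$ with the algebraic sum $\sum_{j=1}^k\psis{\psi_j}$ in the necessity direction. Your version merely makes explicit two steps the paper leaves implicit (the injectivity of $\iso$ forcing $\varphi=\sum_j\varphi_j$, and the density of $\sum_j\psis{\psi_j}$ in $V$), which is a welcome clarification but not a different argument.
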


\begin{proof}
Assume first that (\ref{eq:sumclosed}) holds. Then, by Proposition (\ref{prop:BDR}), $\iso^{-1}[\iso[\psi_j]F_j] \in \langle \psi_j \rangle_\Gamma$ for all $j=1,\dots,k$, so $\varphi \in \displaystyle\sum_{j = 1}^k \langle \psi_j \rangle_\Gamma \subset V$.

Conversely, if $\displaystyle\sum_{j = 1}^k \langle \psi_j \rangle_\Gamma$ is closed, we have that $\displaystyle\sum_{j = 1}^k \langle \psi_j \rangle_\Gamma =V$. Then, $\varphi \in V$ implies that $\varphi = \displaystyle\sum_{j=1}^k \varphi_j$, where $\varphi_j \in \langle \psi_j\rangle_\Gamma$ for all $j = 1, \dots, k$. So, again the conclusion follows by Proposition (\ref{prop:BDR}).
\end{proof}

Recall that conditions for a sum of subspaces of a Hilbert space to be closed can be found in \cite{Deu95}.

\subsection{Minimality and biorthogonal systems}

In this section we characterize minimal systems, or equivalently biorthogonal systems, in terms of a condition on the bracket map. We recall that, for $\psi \in \Hil$, the system $\{\Pi(\gamma)\psi\}_{\gamma \in \Gamma}$ is said to be minimal if, for all $\gamma_0 \in \Gamma$, it holds
$$
\Pi(\gamma_0)\psi \notin \ol{\vsp\{\Pi(\gamma)\psi \, : \, \gamma \in \Gamma , \gamma \neq \gamma_0\}}^\Hil .
$$
Note that, by the same argument provided in \cite{HSWW10b}, it can be proved that $\{\Pi(\gamma)\psi\}_{\gamma \in \Gamma}$ is minimal if and only if
$$
\psi \notin \ol{\vsp\{\Pi(\gamma)\psi \, : \, \gamma \in \Gamma , \gamma \neq \id\}}^\Hil .
$$
\begin{prop}\label{prop:minimal}
Let $(\Gamma,\Pi,\Hil)$ be a dual integrable triple, and let $0 \neq \psi \in \Hil$. The following are equivalent.
\begin{itemize}
\item[i)] $\{\Pi(\gamma)\psi\}_{\gamma \in \Gamma}$ is minimal.
\item[ii)] There exists $\wt{\psi} \in \psis{\psi}$ such that $\{\Pi(\gamma)\psi\}_{\gamma \in \Gamma}$ and $\{\Pi(\gamma)\wt{\psi}\}_{\gamma \in \Gamma}$ are biorthogonal systems
\item[iii)] $[\psi,\psi]$ is invertible in $\ell_2(\Gamma)$ and $[\psi,\psi]^{-1} \in L^1(\vn(\Gamma))$.
\end{itemize}
\end{prop}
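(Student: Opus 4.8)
Write $\Omega:=[\psi,\psi]$. The plan is to route everything through a single algebraic identity characterizing biorthogonality of orbits. \emph{First} I would record the key computation: for any $\eta\in\psis{\psi}$, the systems $\{\Pi(\gamma)\psi\}_{\gamma\in\Gamma}$ and $\{\Pi(\gamma)\eta\}_{\gamma\in\Gamma}$ are biorthogonal if and only if $[\psi,\eta]=\Id$. Indeed, using Definition \ref{noncommutativeBracket} and property II) of the bracket,
\begin{equation*}
\langle \Pi(\gamma)\psi, \Pi(\gamma')\eta\rangle_\Hil = \tau\big([\psi,\eta]\rho(\gamma)^*\rho(\gamma')\big) = \tau\big([\psi,\eta]\rho(\gamma^{-1}\gamma')\big) = \wh{[\psi,\eta]}(\gamma^{-1}\gamma'),
\end{equation*}
and since $\wh{\Id}(\mu)=\delta_{\mu,\id}$, biorthogonality is equivalent to $\wh{[\psi,\eta]}(\mu)=\delta_{\mu,\id}$ for all $\mu$, i.e. to $[\psi,\eta]=\Id$, by $L^1(\vn(\Gamma))$ uniqueness of Fourier coefficients.

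\emph{Next} I would prove $i)\Leftrightarrow ii)$ by the standard minimality-versus-biorthogonality argument, taking care that the dual generator lands in $\psis{\psi}$ and that its orbit is again biorthogonal. For $ii)\Rightarrow i)$, pairing a hypothetical expansion of $\Pi(\gamma_0)\psi$ by elements of $\{\Pi(\gamma)\psi\}_{\gamma\neq\gamma_0}$ against $\Pi(\gamma_0)\wt{\psi}$ yields $1=0$. For $i)\Rightarrow ii)$, set $W=\ol{\vsp\{\Pi(\gamma)\psi:\gamma\neq\id\}}^\Hil$ and $\wt{\psi}=\|\psi-\Proj_W\psi\|^{-2}(\psi-\Proj_W\psi)$, which is nonzero by minimality, lies in $\psis{\psi}$, and satisfies $\langle\Pi(\gamma)\psi,\wt{\psi}\rangle_\Hil=\delta_{\gamma,\id}$; homogeneity under $\Pi$ (applying the unitary $\Pi(\gamma'^{-1})$) then upgrades this to full biorthogonality of the two orbits.

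\emph{The crux is $ii)\Leftrightarrow iii)$}, where I would translate the identity $[\psi,\wt{\psi}]=\Id$ through the isometry $S_\psi$ of Proposition \ref{prop:isometry} and the weight map $\omega$ of Proposition \ref{prop:weightmap}. Given $\wt{\psi}\in\psis{\psi}$, Proposition \ref{prop:BDR} produces $G\in L^2(\vn(\Gamma),\Omega)$ with $\iso[\wt{\psi}]=\iso[\psi]G$ and $[\wt{\psi},\psi]=\Omega G$; taking adjoints and using property I), the biorthogonality condition $[\psi,\wt{\psi}]=\Id$ becomes $\Omega G=\Id$. Applying $\omega$ and writing $H:=\Omega^{\frac12}G\in L^2(\vn(\Gamma))$, this reads $\Omega^{\frac12}H=\Id$, so $\Omega^{\frac12}$ is invertible with $L^2$-inverse $H=\Omega^{-\frac12}$, whence $\Omega^{-1}\in L^1(\vn(\Gamma))$ since $\|\Omega^{-1}\|_1=\tau(\Omega^{-1})=\|H\|_2^2<\infty$. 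Conversely, if $\Omega$ is invertible with $\Omega^{-1}\in L^1(\vn(\Gamma))$, then $\Omega^{-\frac12}\in L^2(\vn(\Gamma))$, so $G:=\omega^{-1}(\Omega^{-\frac12})=\Omega^{-1}$ defines an element of $L^2(\vn(\Gamma),\Omega)$, and $\wt{\psi}:=S_\psi^{-1}(G)\in\psis{\psi}$ satisfies $[\wt{\psi},\psi]=\Omega G=\Id$, hence $[\psi,\wt{\psi}]=\Id$ and the orbits are biorthogonal.

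\emph{The main obstacle} is the operator-algebraic bookkeeping in this last step: $\Omega$ and $\Omega^{-1}$ are in general unbounded operators affiliated to $\vn(\Gamma)$, so I must make the products $\Omega G=\Omega^{\frac12}(\Omega^{\frac12}G)$ precise as elements of $L^1(\vn(\Gamma))$ via H\"older, justify that a one-sided inverse of $\Omega^{\frac12}$ is automatically two-sided (using that $\tau$ is finite, so the algebra of affiliated operators is directly finite), and verify that $\Omega^{-1}$ genuinely represents an element of $L^2(\vn(\Gamma),\Omega)$ through $\omega$. These same points pin down the exact meaning of ``$[\psi,\psi]$ is invertible in $\ell_2(\Gamma)$'' in $iii)$, namely that $\Omega$ is injective with dense range and affiliated inverse.
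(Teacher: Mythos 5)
Your proposal is correct and follows essentially the same route as the paper: the core equivalence $ii)\Leftrightarrow iii)$ is obtained exactly as in the paper's proof, namely via Proposition \ref{prop:BDR} to produce the multiplier $F$ with $[\psi,\psi]F=\Id$ (resp.\ Proposition \ref{prop:isometry} to pull $[\psi,\psi]^{-1}$ back to a dual generator $\wt{\psi}\in\psis{\psi}$) together with the trace computation showing $[\psi,\psi]^{-1}\in L^1(\vn(\Gamma))$. The only differences are that you spell out the minimality-versus-biorthogonality argument for $i)\Leftrightarrow ii)$, which the paper delegates to a citation of \cite{HSWW10a}, and that you are somewhat more explicit about the unbounded-operator bookkeeping.
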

\newpage
\begin{proof}
Recall that $\{\Pi(\gamma)\psi\}_{\gamma \in \Gamma}$ and $\{\Pi(\gamma)\wt{\psi}\}_{\gamma \in \Gamma}$ are biorthogonal systems if
$$
\langle \Pi(\gamma)\psi, \Pi(\gamma')\wt{\psi}\rangle_{\Hil} = \delta_{\gamma,\gamma'} \quad \forall \ \gamma, \gamma' \in \Gamma.
$$
The equivalence of $i)$ and $ii)$ can be carried out following the same argument provided in \cite[Th. 6.1]{HSWW10a}.

Let us prove $ii) \Rightarrow iii)$ Since $\wt{\psi} \in \psis{\psi}$, by Proposition \ref{prop:BDR} there exists $F = F_{\wt{\psi}} \in L^2(\vn(\Gamma),[\psi,\psi])$ such that $\iso[\wt{\psi}] = \iso[\psi]F$ and $[\wt{\psi},\psi] = [\psi,\psi]F$. Moreover, using the definition of dual integrability, it follows that $\{\Pi(\gamma)\psi\}_{\gamma \in \Gamma}$ and $\{\Pi(\gamma)\wt{\psi}\}_{\gamma \in \Gamma}$ are biorthogonal if and only if $[\wt{\psi},\psi] = \Id_{\ell_2(\Gamma)}$. Thus $[\psi,\psi]F = \Id_{\ell_2(\Gamma)}$, which shows that $[\psi,\psi]$ is invertible. Its inverse belongs to $L^1(\vn(\Gamma))$ because
\begin{align*}
\|[\psi,\psi]^{-1}\|_1 & = \tau([\psi,\psi]^{-1}) = \tau(F) = \tau(F[\psi,\psi]F) = \|F\|_{2,[\psi,\psi]} = \|\iso[\psi]F\|_\oplus\\
& = \|\iso[\wt{\psi}]\|_\oplus = \|\wt{\psi}\|_{\Hil} .
\end{align*}

Let us now prove $iii) \Rightarrow ii)$ Since $[\psi,\psi]^{-1} \in L^1(\vn(\Gamma))$, it follows that $[\psi,\psi]^{-1} \in L^2(\vn(\Gamma),[\psi,\psi])$. In fact
$$
\tau(|[\psi,\psi]^{-1}|^2[\psi,\psi]) = \tau([\psi,\psi]^{-1}) = \|[\psi,\psi]^{-1}\|_1 < \infty.
$$
Then, by Proposition \ref{prop:isometry}, there exists $\wt{\psi} \in \psis{\psi}$ such that $S_\psi[\wt{\psi}] = [\psi,\psi]^{-1}$. Since $S_\psi$ is an isometry, for all $\gamma \in \Gamma$ we have
\begin{align*}
\langle \Pi(\gamma)\psi,\wt{\psi}\rangle_{\Hil} & = \langle S_\psi[\Pi(\gamma)\psi],S_\psi[\wt{\psi}]\rangle_{2,[\psi,\psi]} = \langle \rho(\gamma)^*,[\psi,\psi]^{-1}\rangle_{2,[\psi,\psi]}\\
& = \tau(\rho(\gamma)^*[\psi,\psi]^{-1}[\psi,\psi]) = \tau(\rho(\gamma)^*) = \delta_{\gamma,0}
\end{align*}
which shows biorthogonality.
\end{proof}

\section{Frames of orbits}\label{sec:REPRODUCING}

In this section we study reproducing properties of systems of the form 
\begin{equation}\label{eq:systemsoftheform}
E = \{\Pi(\gamma)\phi_i : \gamma \in \Gamma, i \in \ind\}
\end{equation}
where $\{\phi_i\}_{i \in \ind} \subset \Hil$ is a countable family, $(\Gamma,\Pi,\Hil)$ is a dual integrable triple, and $\ind$ is a countable index set. We first show existence of Parseval frames sequences of that form, and then we characterize families $\{\phi_i\}_{i \in \ind}$ for which the system $E$ of their orbits is a Riesz or a frame sequence.

\subsection{Existence of Parseval frames}

The purpose of this subsection is to prove that every $\Pi$-invariant space has a Parseval frame of orbits. We start by doing so for principal spaces, extending \cite[Th. 2.21]{BDR94a} and \cite[Cor. 3.8]{KT08}.

\begin{theo}\label{theo:Parsevalframeprincipal}
Let $(\Gamma,\Pi,\Hil)$ be a dual integrable triple, and let $0 \neq \psi \in \Hil$. Then there exists $\phi \in \Hil$ such that $\{\Pi(\gamma)\phi\}_{\gamma \in \Gamma}$ is a Parseval frame for $\psis{\psi}$.
\end{theo}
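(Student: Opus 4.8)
The plan is to transport the problem, by means of a surjective isometry, to the concrete model space $W := s_{[\psi,\psi]} L^2(\vn(\Gamma))$, inside which an explicit Parseval frame generator is readily available. Write $\Omega = [\psi,\psi]$ and recall from Proposition \ref{prop:isometry} the surjective isometry $S_\psi : \psis{\psi} \to L^2(\vn(\Gamma),\Omega)$, and from Proposition \ref{prop:weightmap} the surjective isometry $\omega : L^2(\vn(\Gamma),\Omega) \to s_\Omega L^2(\vn(\Gamma)) = W$, given by $\omega(F) = \Omega^{1/2}F$. I would consider their composition $\Theta := \omega \circ S_\psi : \psis{\psi} \to W$, which is again a surjective isometry. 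Combining the intertwining relation (\ref{eq:intertwining}) with the elementary identity $\omega(F\rho(\gamma)^*) = (\Omega^{1/2}F)\rho(\gamma)^* = \omega(F)\rho(\gamma)^*$ (the left action of $\Omega^{1/2}$ commutes with the right action of $\rho(\gamma)^*$), one obtains that $\Theta$ intertwines $\Pi$ with the right action, namely $\Theta[\Pi(\gamma)\varphi] = \Theta[\varphi]\rho(\gamma)^*$ for all $\varphi \in \psis{\psi}$ and $\gamma \in \Gamma$. Since a Parseval frame is preserved under a surjective isometry, it then suffices to exhibit an element $G \in W$ whose right orbit $\{G\rho(\gamma)^*\}_{\gamma\in\Gamma}$ is a Parseval frame for $W$, and to set $\phi := \Theta^{-1}(G)$.

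The natural candidate is $G = s_\Omega$ itself. Here the finiteness of the trace $\tau$ is what makes the construction work: since $\tau(\Id) = 1$, every orthogonal projection of $\vn(\Gamma)$ has finite trace, so $s_\Omega \in L^2(\vn(\Gamma))$, and clearly $s_\Omega = s_\Omega s_\Omega \in W$. To verify the Parseval identity I would take an arbitrary $G \in W$, that is $G = s_\Omega G$, and compute its frame coefficients: using $\langle F_1,F_2\rangle_2 = \tau(F_2^* F_1)$, the selfadjointness of $s_\Omega$, the traciality of $\tau$, and finally $s_\Omega G = G$, one finds
\[
\langle G, s_\Omega \rho(\gamma)^*\rangle_2 = \tau(\rho(\gamma) s_\Omega G) = \tau(\rho(\gamma) G) = \tau(G\rho(\gamma)) = \wh{G}(\gamma).
\]
Since $\{\rho(\gamma)\}_{\gamma\in\Gamma}$ is an orthonormal basis of $L^2(\vn(\Gamma))$, Plancherel's identity gives $\sum_{\gamma\in\Gamma}|\langle G, s_\Omega\rho(\gamma)^*\rangle_2|^2 = \sum_{\gamma\in\Gamma}|\wh{G}(\gamma)|^2 = \|G\|_2^2$ for every $G \in W$, which is exactly the assertion that $\{s_\Omega\rho(\gamma)^*\}_{\gamma\in\Gamma}$ is a Parseval frame for $W$.

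Finally, I would set $\phi := \Theta^{-1}(s_\Omega) \in \psis{\psi}$. Because $\Theta$ is a surjective isometry and $\Theta[\Pi(\gamma)\phi] = \Theta[\phi]\rho(\gamma)^* = s_\Omega\rho(\gamma)^*$, the system $\{\Pi(\gamma)\phi\}_{\gamma\in\Gamma}$ is the $\Theta$-preimage of a Parseval frame for $W$, and is therefore a Parseval frame for $\psis{\psi}$. I want to stress that this simultaneously settles the two things one must check: not only the two-sided frame bound, but also that $\phi$ generates all of $\psis{\psi}$ (a Parseval frame is in particular complete), so that no separate argument for $\psis{\phi} = \psis{\psi}$ is required. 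The only genuinely delicate point — and the place where an analogue could fail in the presence of a non-finite trace — is the square-integrability of $s_\Omega$; everything else is formal once the intertwining isometry $\Theta$ is in place. If one prefers an intrinsic description, the same $\phi$ is characterized by $[\phi,\phi] = s_{[\psi,\psi]}$, recovering the bracket-projection characterization of Parseval frame generators.
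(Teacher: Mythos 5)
Your proof is correct and follows essentially the same route as the paper: the paper likewise transports $\psis{\psi}$ onto $s_{[\psi,\psi]}L^2(\vn(\Gamma))$ via the composition $[\psi,\psi]^{1/2}S_\psi$ (Propositions \ref{prop:isometry} and \ref{prop:weightmap}) and takes the Parseval generator to be the preimage of the projection $s_{[\psi,\psi]}$. The only difference is cosmetic: the paper passes further to $\ell_2(\Gamma)$ by taking Fourier coefficients and cites Proposition \ref{prop:Parsevalframeleft} for the Parseval property of $\{\lambda(\gamma)\wh{s_{[\psi,\psi]}}\}_{\gamma\in\Gamma}$, whereas you verify the Parseval identity for $\{s_{[\psi,\psi]}\rho(\gamma)^*\}_{\gamma\in\Gamma}$ directly by a Plancherel computation, which is a harmless self-contained variant.
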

\begin{proof}
Let $p = \wh{s_{[\psi,\psi]}} \in \ell_2(\Gamma)$, that is $p(\gamma) = \tau(s_{[\psi,\psi]}\rho(\gamma))$ for every $\gamma \in \Gamma$, and observe that
$$
\begin{array}{rccl}
H_\psi : & \psis{\psi} & \to & \ol{\vsp\{\lambda(\gamma)p\}_{\gamma \in \Gamma}}^{\ell_2(\Gamma)}\vspace{.5ex}\\
& \varphi & \mapsto & \Big\{\tau\Big([\psi,\psi]^\frac12 S_\psi[\varphi]\rho(\gamma)\Big)\Big\}_{\gamma \in \Gamma}
\end{array}
$$
is an isometric isomorphism of Hilbert spaces satisfying
\begin{equation}\label{eq:intertwiningH}
H_\psi[\Pi(\gamma)\varphi] = \lambda(\gamma) H_\psi[\varphi] \quad \forall \ \gamma \in \Gamma, \varphi \in \psis{\psi}.
\end{equation}
Indeed, $[\psi,\psi]^\frac12 S_\psi : \psis{\psi} \to s_{[\psi,\psi]}L^2(\vn(\Gamma))$ is an isometric isomorphism
by Propositions \ref{prop:isometry} and \ref{prop:weightmap}. Now, by Theorem \ref{theo:Fourierinvariance} we know that $V = \big(s_{[\psi,\psi]}L^2(\vn(\Gamma))\big)^\wedge$ is a left-invariant subspace of $\ell_2(\Gamma)$ such that $\Proj_V = s_{[\psi,\psi]} = \F_\Gamma p$, and, by Proposition \ref{prop:Parsevalframeleft}, we have that $V = \ol{\vsp\{\lambda(\gamma)\wh{\Proj_V}\}_{\gamma \in \Gamma}}^{\ell_2(\Gamma)}$. This implies that 
$$ H_\psi:\psis{\psi} \to \ol{\vsp\{\lambda(\gamma)p\}_{\gamma \in \Gamma}}^{\ell_2(\Gamma)}$$ is an isometric isomorphism.
Additionally, by (\ref{eq:intertwining}) it follows that $$[\psi,\psi]^\frac12 S_\psi[\Pi(\gamma)\varphi] = [\psi,\psi]^\frac12 S_\psi[\varphi]\rho(\gamma)^* \quad \forall \ \gamma \in \Gamma, \varphi \in \psis{\psi}.$$ Thus, for $\gamma, \gamma' \in \Gamma$, we have
\begin{align*}
H_\psi[\Pi(\gamma)\varphi](\gamma') & = \tau\Big([\psi,\psi]^\frac12 S_\psi[\Pi(\gamma)\varphi]\rho(\gamma')\Big) = \tau\Big([\psi,\psi]^\frac12 S_\psi[\varphi]\rho(\gamma)^*\rho(\gamma')\Big)\\
& = \tau\Big([\psi,\psi]^\frac12 S_\psi[\varphi]\rho(\gamma^{-1}\gamma')\Big) = H_\psi[\varphi](\gamma^{-1}\gamma') = \lambda(\gamma) H_\psi[\varphi](\gamma')
\end{align*}
hence proving (\ref{eq:intertwiningH}).

Let now $\phi = H_\psi^{-1}[p]$. Then, for $\varphi \in \psis{\psi}$, since $\{\lambda(\gamma)p\}_{\gamma \in \Gamma}$ is a Parseval frame sequence by Proposition \ref{prop:Parsevalframeleft}, we have
\begin{align*}
\sum_{\gamma \in \Gamma} |\langle \varphi, \Pi(\gamma)\phi\rangle_\Hil|^2 & = \sum_{\gamma \in \Gamma} |\langle H_\psi[\varphi], H_\psi[\Pi(\gamma)\phi]\rangle_{\ell_2(\Gamma)}|^2 = \sum_{\gamma \in \Gamma} |\langle H_\psi[\varphi], \lambda(\gamma) p\rangle_{\ell_2(\Gamma)}|^2\\
& = \|H_\psi[\varphi]\|_{\ell_2(\Gamma)}^2 = \|\varphi\|_\Hil^2 \ ,
\end{align*}
showing that $\{\Pi(\gamma)\phi\}_{\gamma \in \Gamma}$ is a Parseval frame for $\psis{\psi}$.
\end{proof}

\begin{cor}\label{cor:Parsevalframe}
Let $V \subset \Hil$ be a $\Pi$-invariant subspace. Then there exist a countable family $\{\phi_i\}_{i \in \ind} \subset \Hil$ such that $E = \{\Pi(\gamma)\phi_i : \gamma \in \Gamma, i \in \ind\}$ is a Parseval frame for $V$.
\end{cor}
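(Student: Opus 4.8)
The plan is to reduce the statement to the principal case already settled in Theorem~\ref{theo:Parsevalframeprincipal}, using the orthogonal decomposition furnished by Lemma~\ref{lem:generators}.

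First I would apply Lemma~\ref{lem:generators} to the $\Pi$-invariant subspace $V$, obtaining a countable family $\{\psi_i\}_{i \in \ind} \subset V$ of nonzero elements with $\psis{\psi_i} \perp \psis{\psi_j}$ for $i \neq j$ and $V = \bigoplus_{i \in \ind} \psis{\psi_i}$. For each $i \in \ind$ I would then invoke Theorem~\ref{theo:Parsevalframeprincipal}, which applies since $\psi_i \neq 0$, to produce an element $\phi_i \in \psis{\psi_i}$ such that $\{\Pi(\gamma)\phi_i\}_{\gamma \in \Gamma}$ is a Parseval frame for the principal invariant subspace $\psis{\psi_i}$.

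It then remains to check that the combined system $E = \{\Pi(\gamma)\phi_i : \gamma \in \Gamma, i \in \ind\}$, which is countable since both $\Gamma$ and $\ind$ are, forms a Parseval frame for $V$. This is the standard gluing of Parseval frames across mutually orthogonal subspaces. Fixing $\varphi \in V$ and using that $\Pi(\gamma)\phi_i \in \psis{\psi_i}$, one has $\langle \varphi, \Pi(\gamma)\phi_i\rangle_\Hil = \langle \Proj_{\psis{\psi_i}}\varphi, \Pi(\gamma)\phi_i\rangle_\Hil$, so summing over $\gamma$ and applying the Parseval property on each summand gives
\begin{equation*}
\sum_{\gamma \in \Gamma} |\langle \varphi, \Pi(\gamma)\phi_i\rangle_\Hil|^2 = \|\Proj_{\psis{\psi_i}}\varphi\|_\Hil^2 \quad \forall\, i \in \ind.
\end{equation*}
Summing over $i$ and using that the orthogonality of the decomposition yields $\|\varphi\|_\Hil^2 = \sum_{i \in \ind} \|\Proj_{\psis{\psi_i}}\varphi\|_\Hil^2$ produces $\sum_{i \in \ind}\sum_{\gamma \in \Gamma} |\langle \varphi, \Pi(\gamma)\phi_i\rangle_\Hil|^2 = \|\varphi\|_\Hil^2$, which is precisely the Parseval frame identity for $E$.

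I do not expect a genuinely hard step here: essentially all of the content is packaged into Theorem~\ref{theo:Parsevalframeprincipal} and Lemma~\ref{lem:generators}. The only point deserving care is the interchange of the two summations together with the appeal to orthogonality to pass from the per-subspace Parseval identities to the global one. This is elementary but should be stated cleanly, as it is exactly what allows the \emph{Parseval} normalization (rather than merely a frame with unspecified bounds) to survive the gluing.
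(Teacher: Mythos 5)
Your proposal is correct and follows exactly the paper's own argument: decompose $V$ via Lemma \ref{lem:generators}, apply Theorem \ref{theo:Parsevalframeprincipal} to each principal summand, and glue the Parseval frames across the mutually orthogonal subspaces. The only difference is that you spell out the gluing computation explicitly, which the paper leaves as a one-line remark.
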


\begin{proof}
Consider a family $\{\psi_i\}_{i \in \ind}$ as in Lemma \ref{lem:generators}. Now, for each $i \in \ind$, let $\phi_i$ be the Parseval frame generator of $\psis{\psi_i}$ given by Theorem \ref{theo:Parsevalframeprincipal}. Since $\psis{\phi_i} \perp \psis{\phi_j}$ for $i \neq j$, the system $E$ is a Parseval frame for $V$.
\end{proof}

We remark that this corollary extends to general discrete groups and unitary representations the following results \cite[Th. 3.3]{Bow00}, \cite[Th. 3.10]{KT08}, \cite[Th. 4.11]{CP10}, \cite[Th. 5.5]{BHP15b} (see also \cite[Th. 5.3]{BR15}).

\subsection{Characterization of frames and Riesz systems}

This subsection is devoted to characterize the reproducing properties of systems of the form (\ref{eq:systemsoftheform}).

For instance, we can easily see that $E$ is an orthonormal system if and only if
\begin{equation}\label{eq:onb}
[\phi_i, \phi_j] = \delta_{i,j} \Id_{\ell_2(\Gamma)}.
\end{equation}
Indeed, observe first that by definition of the bracket map we have that, for $i \neq j$
$$
\psis{\phi_i} \bot \psis{\phi_j} \iff [\phi_i,\phi_j] = 0
$$
because
$$
[\phi_i,\phi_j] = 0 \iff 0 = \tau([\phi_i,\phi_j]\rho(\gamma)) = \langle \phi_i, \Pi(\gamma)\phi_j\rangle_\Hil \quad \forall \ \gamma \in \Gamma.
$$
Moreover, for each $i \in \ind$, we have that $\{\Pi(\gamma)\phi_i\}_{\gamma \in \Gamma}$ is an orthonormal system if and only if $[\phi_i,\phi_i] = \Id_{\ell_2(\Gamma)}$ by the same argument as above (see also \cite[i), Th. A]{BHP15a}).

For the case of Riesz and frame sequences, the characterization is not as simple, and it will be the content of the next two theorems. The structure of their proofs is analogous to the one developed for the abelian cases in \cite[Th. 2.3]{Bow00} and \cite[Th. 4.1 and Th. 4.3]{CP10}.

\begin{theo}\label{th:riesz}
Let $(\Gamma,\Pi,\Hil)$ be a dual integrable triple, let $\{\phi_i\}_{i \in \ind} \subset \Hil$ be a countable family, and denote by $E$ the system
$$
E = \{\Pi(\gamma)\phi_i : \gamma \in \Gamma, i \in \ind\}.
$$
Given two constants $0 < A \leq B < \infty$, the following conditions are equivalent:
\begin{itemize}
\item[i)] $E$ is a Riesz sequence with frame bounds $A , B$.
\item[ii)] $\displaystyle A \sum_{i \in \ind} |F_i|^2 \leq \sum_{i,j \in \ind} F_j^*[\phi_i,\phi_j]F_i \leq B \sum_{i \in \ind} |F_i|^2$\\
for all finite sequence $\{F_i\}_{i \in \ind}$ in $\vn(\Gamma)$.
\end{itemize}
\end{theo}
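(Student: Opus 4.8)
The plan is to transport the Riesz inequalities through the Helson map $\iso$ so that they become operator inequalities in $\vn(\Gamma)$, and the conceptual heart of the argument will be upgrading scalar (trace) inequalities to the operator inequalities of condition ii). I would begin by recording a single identity linking both conditions. For a finite family $\{F_i\}_{i\in\ind}\subset\vn(\Gamma)$, inserting the expression \eqref{eq:Helsonbracket} of the bracket and applying Fubini (valid by normality of $\tau$, exactly as in Proposition \ref{prop:propertyii}) yields
\[
\tau\Big(\sum_{i,j\in\ind}F_j^*[\phi_i,\phi_j]F_i\Big)
=\int_\M\Big\|\sum_{i\in\ind}\isom{\phi_i}F_i\Big\|_2^2\,d\nu(x)
=\norm{\sum_{i\in\ind}\iso[\phi_i]F_i}^2 ,
\]
while $\tau\big(\sum_{i\in\ind}|F_i|^2\big)=\sum_{i\in\ind}\|F_i\|_2^2$. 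In particular both operators appearing in ii) are, after tracing, manifestly nonnegative. Specializing to a trigonometric polynomial $F_i=\sum_{\gamma}a_{i,\gamma}\rho(\gamma)^*$ and setting $\varphi=\sum_{i,\gamma}a_{i,\gamma}\Pi(\gamma)\phi_i$, the Helson property \eqref{eq:Helsonproperty} gives $\sum_i\iso[\phi_i]F_i=\iso[\varphi]$, so the right-hand side above equals $\norm{\iso[\varphi]}^2=\|\varphi\|_\Hil^2$, whereas $\sum_i\|F_i\|_2^2=\sum_{i,\gamma}|a_{i,\gamma}|^2$. Thus the Riesz estimate of i) is precisely the pair of inequalities of ii), tested against trigonometric polynomials and then traced.

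Given this, ii)$\Rightarrow$i) is immediate: the trace $\tau$ is positive and order preserving, so applying it to the operator inequalities of ii) with $F_i$ trigonometric polynomials gives, via the identities above, exactly the two-sided Riesz bound. The substance lies in i)$\Rightarrow$ii), which I would carry out in two steps. First, from i) the scalar inequalities $A\sum_i\|F_i\|_2^2\le\norm{\sum_i\iso[\phi_i]F_i}^2\le B\sum_i\|F_i\|_2^2$ hold for all trigonometric polynomials $F_i$; I would extend them to arbitrary finite families $\{F_i\}\subset\vn(\Gamma)$ by approximating each $F_i$ in $\|\cdot\|_2$ and each $F_i^*$ strongly by trigonometric polynomials, then passing to the limit using the continuity estimates \eqref{eq:convergence}--\eqref{eq:weightedconvergence} from the proof of Theorem \ref{theo:multiplicatively}.

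Second, to pass from these scalar inequalities to the operator inequalities of ii), I would use the substitution $F_i\mapsto F_iG$ for arbitrary $G\in\vn(\Gamma)$. Writing $S=\sum_i|F_i|^2$ and $T=\sum_{i,j}F_j^*[\phi_i,\phi_j]F_i$ (both self-adjoint, with $T$ positive by the identity above), one has $\sum_i|F_iG|^2=G^*SG$ and $\sum_{i,j}(F_jG)^*[\phi_i,\phi_j](F_iG)=G^*TG$, so the scalar lower bound applied to the family $\{F_iG\}$ reads $\tau\big(G^*(T-AS)G\big)\ge0$. By traciality this is $\tau\big((T-AS)GG^*\big)\ge0$, and since $GG^*$ ranges over all positive elements of $\vn(\Gamma)$ as $G$ ranges over $\vn(\Gamma)$ (take $G=P^{1/2}$), we obtain $\tau\big((T-AS)P\big)\ge0$ for every positive $P\in\vn(\Gamma)$. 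Faithfulness of the finite trace then forces $T-AS\ge0$: testing against the spectral projection $E$ of $T-AS$ onto $(-\infty,0)$ gives $(T-AS)E\le0$ together with $\tau\big((T-AS)E\big)\ge0$, whence $(T-AS)E=0$ and so $E=0$. The upper bound $T\le BS$ follows symmetrically.

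I expect the last upgrade, from trace inequalities to operator inequalities, to be the main obstacle: the scalar inequalities are strictly weaker, and it is the right $\vn(\Gamma)$-invariance of the problem (reflecting that the right action preserves $\iso[V]$, in the spirit of condition iii) of Theorem \ref{theo:multiplicatively}) combined with faithfulness of $\tau$ that bridges the gap. A secondary technical point is making the first-step approximation converge in the relevant weighted norms, which I would handle exactly as in the proof of Theorem \ref{theo:multiplicatively}.
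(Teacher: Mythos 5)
Your proposal is correct and follows essentially the same route as the paper: the same Helson-map identity reduces i) to a traced version of ii), the converse direction is the same triviality of positivity of $\tau$, and your upgrade from trace inequalities to operator inequalities via $F_i\mapsto F_iG$, traciality, and a spectral projection is a mild repackaging of the paper's contradiction argument, which right-multiplies the family by the spectral projection of the defect operator onto $(0,\infty)$ and applies faithfulness of $\tau$. Your explicit density step passing from trigonometric-polynomial $F_i$ to general $F_i\in\vn(\Gamma)$ is a detail the paper elides, and it is handled correctly.
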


\begin{proof}
Note first that, if $\iso$ is a Helson map associated to $(\Gamma,\Pi,\Hil)$, by Proposition \ref{prop:propertyii} we have
$$
\sum_{i,j \in \ind} F_j^*[\phi_i,\phi_j]F_i = \int_\M \Big| \sum_{i \in \ind} \isom{\phi_i}F_i\Big|^2d\nu(x) .
$$

Let $\{b(\gamma,i) : \gamma \in \Gamma, i \in \ind\}$ be a finite sequence. Then, by the properties of the Helson map, we have
\begin{align*}
\bigg\|\sum_{{\gamma \in \Gamma} \atop {i \in \ind}} b(\gamma, i) \Pi(\gamma)\phi_i\bigg\|_\Hil^2
& = \bigg\|\iso\Big[\sum_{{\gamma \in \Gamma} \atop {i \in \ind}} b(\gamma, i) \Pi(\gamma)\phi_i\Big]\bigg\|_{\oplus}^2\\
& = \int_\M \tau\bigg(\Big|\sum_{i \in \ind}\isom{\phi_i}\sum_{\gamma \in \Gamma} b(\gamma, i) \rho(\gamma)^*\Big|^2\bigg) d\nu(x).
\end{align*}
On the other hand, if we call $F_i = \displaystyle\sum_{\gamma \in \Gamma} b(\gamma, i) \rho(\gamma)^*$, by Plancherel Theorem we have
$$
\sum_{{\gamma \in \Gamma} \atop {i \in \ind}} |b(\gamma, i)|^2 = \sum_{i \in \ind} \tau(|F_i|^2) .
$$
Then, condition $i)$ of $E$ being a Riesz sequence is equivalent to the condition\vspace{4pt}\\
$iii) \quad \displaystyle A \tau\bigg(\sum_{i \in \ind} |F_i|^2\bigg) \leq \tau \bigg( \int_\M \Big|\sum_{i \in \ind}\isom{\phi_i}F_i\Big|^2 d\nu(x)\bigg) \leq B \tau\bigg(\sum_{i \in \ind} |F_i|^2\bigg)$. \vspace{4pt}\\
for all finite sequence $\{F_i\}_{i \in \ind} \subset \vn(\Gamma)$.

We then prove the equivalence of $ii)$ and $iii)$. The implication $ii) \Rightarrow iii)$ is trivial, since for all positive operators $P$ on $\ell^2(\Gamma)$ one has $\tau(P) \geq 0$.

In order to prove $iii) \Rightarrow ii)$ we proceed by contradiction.
Suppose indeed that the right inequality in $ii)$ does not hold for a finite sequence $\{F_i\}_{i \in \ind} \subset \vn(\Gamma)$, and define $\Proj$ to be the orthogonal projection
$$
\Proj = \chi_{{}_{(0,\infty)}}\Big(\int_\M\Big|\sum_{i \in \ind} \isom{\phi_i}F_i\Big|^2d\nu(x) - B \sum_{i \in \ind} |F_i|^2\Big)
$$
where $\chi_{{}_\Omega}(F)$ stands for the spectral projection of the selfadjoint operator $F$ over the Borel set $\Omega \subset \R$. By \cite[Theorem 5.3.4]{Sun97}, since we are defining a spectral projection of a closed and densely defined selfadjoint affiliated operator, then $\Proj \in \vn(\Gamma)$. Then $W = \Ran(\Proj)$ is the closed linear subspace of $\ell_2(\Gamma)$ where the right inequality in $ii)$ does not hold, and
$$
\langle \Big(\int_\M\Big|\sum_{i \in \ind} \isom{\phi_i}F_i\Big|^2d\nu(x) - B \sum_{i \in \ind} |F_i|^2\Big) u, u\rangle_{\ell^2(\Gamma)} > 0 \quad \forall \ u \in W .
$$
This means that
\begin{equation}\label{eq:dummy28}
\Proj \Big(\displaystyle{\int_\M}\Big|\sum_{i \in \ind} \isom{\phi_i}F_i\Big|^2d\nu(x) - B \sum_{i \in \ind} |F_i|^2\Big) \Proj > 0 . 
\end{equation}
We now write
\begin{align*}
\Proj \Big(\int_\M\Big|\sum_{i \in \ind} & \isom{\phi_i}F_i\Big|^2d\nu(x) - B \sum_{i \in \ind} |F_i|^2\Big) \Proj\\
& = \int_\M\sum_{i, j \in \ind} \Proj F_j^*\isom{\phi_j}^*\isom{\phi_i}F_i \Proj d\nu(x)  - B \sum_{i,j \in \ind} \Proj F_j^*F_i \Proj\\
& = \int_\M\sum_{i,j \in \ind} {F^W_j}^*\isom{\phi_j}^*\isom{\phi_i}F^W_i d\nu(x)  - B \sum_{i,j \in \ind} {F^W_j}^*F^W_i 
\end{align*}
where we have used the shorthand notation $F^W_i = F_i \Proj \in \vn(\Gamma)$. By the linearity of $\tau$, we can then deduce from (\ref{eq:dummy28}) that
$$
\tau \bigg( \int_\M \Big|\sum_{i \in \ind}\isom{\phi_i}F^W_i\Big|^2 d\nu(x)\bigg) > B \tau\bigg(\sum_{i \in \ind} |F^W_i|^2\bigg)
$$
which contradicts the right inequality of $iii)$. When the inequality at the left hand side fails, we can proceed analogously and obtain a similar contradiction.
\end{proof}

\begin{rem}
The characterization of orthonormal systems given by (\ref{eq:onb}) can be also deduced from Theorem \ref{th:riesz} as follows. Item $ii)$, Theorem \ref{th:riesz} for orthonormal systems reads
\begin{equation}\label{eq:onbdummy}
\sum_{i,j \in \ind} F_j^*[\phi_i,\phi_j]F_i = \sum_{i \in \ind} |F_i|^2
\end{equation}
for all finite sequence $\{F_i\}_{i \in \ind} \subset \vn(\Gamma)$. If (\ref{eq:onb}) holds, this identity is trivial. Conversely, for each $k \in \ind$ consider the finite sequence $\{\delta_{j,k} \Id_{\ell_2(\Gamma)}\}_{j \in \ind} \subset \vn(\Gamma)$ and apply (\ref{eq:onbdummy}) to obtain $[\phi_k,\phi_k] = \Id_{\ell_2(\Gamma)}$. Using this, and applying (\ref{eq:onbdummy}) to the sequence $\{(\delta_{j,k_1} + \delta_{j,k_2}) \Id_{\ell_2(\Gamma)}\}_{j \in \ind} \subset \vn(\Gamma)$ with $k_1 \neq k_2$ we then get
$$
2 \Id_{\ell_2(\Gamma)} + [\phi_{k_1},\phi_{k_2}] + [\phi_{k_2},\phi_{k_1}] = 2 \Id_{\ell_2(\Gamma)} .
$$
Analogously, for the sequence $\{(\delta_{j,k_1} + i \delta_{j,k_2}) \Id_{\ell_2(\Gamma)}\}_{j \in \ind} \subset \vn(\Gamma)$ with $k_1 \neq k_2$ we obtain
$$
2 \Id_{\ell_2(\Gamma)} - i( [\phi_{k_1},\phi_{k_2}] - [\phi_{k_2},\phi_{k_1}]) = 2 \Id_{\ell_2(\Gamma)} .
$$
Thus $[\phi_{k_1},\phi_{k_2}] = 0$.

\end{rem}

\begin{theo}\label{theo:frames}
Let $(\Gamma,\Pi,\Hil)$ be a dual integrable triple, let $\{\phi_i\}_{i \in \ind} \subset \Hil$ be a countable family, and denote with $E$ the system
$$
E = \{\Pi(\gamma)\phi_i : \gamma \in \Gamma, i \in \ind\}.
$$
Given two constants $0 < A \leq B < \infty$, the following conditions are equivalent:
\begin{itemize}
\item[i)] $E$ is a frame sequence with frame bounds $A , B$.
\item[ii)] $A [f,f] \leq \displaystyle \sum_{i \in \ind} |[f, \phi_i]|^2 \leq B [f,f]$ \ for all $f \in \ol{\vsp\, E}^\Hil$ .
\end{itemize}
\end{theo}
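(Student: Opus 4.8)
The plan is to follow closely the strategy of Theorem \ref{th:riesz}: first rewrite the frame inequalities of $i)$ as a trace inequality, observe that the operator inequality $ii)$ trivially implies it, and then recover $ii)$ from the trace inequality by a spectral projection (compression) argument. Throughout write $V = \ol{\vsp\, E}^\Hil$.

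First I would translate the frame condition into the language of the bracket. For $f \in V$ and $i \in \ind$, the definition of the bracket map gives $\langle f, \Pi(\gamma)\phi_i\rangle_\Hil = \tau([f,\phi_i]\rho(\gamma)) = \wh{[f,\phi_i]}(\gamma)$, so $\{\langle f, \Pi(\gamma)\phi_i\rangle_\Hil\}_{\gamma \in \Gamma}$ is exactly the sequence of Fourier coefficients of $[f,\phi_i]$. By Plancherel's Theorem between $\ell_2(\Gamma)$ and $L^2(\vn(\Gamma))$, this sequence lies in $\ell_2(\Gamma)$ if and only if $[f,\phi_i] \in L^2(\vn(\Gamma))$, and in that case
$$
\sum_{\gamma \in \Gamma} |\langle f, \Pi(\gamma)\phi_i\rangle_\Hil|^2 = \|[f,\phi_i]\|_2^2 = \tau(|[f,\phi_i]|^2).
$$
Summing over $i$ and using $\|f\|_\Hil^2 = \|[f,f]\|_1 = \tau([f,f])$ (Property III) and the positivity of $[f,f]$), condition $i)$ is equivalent, with both sides simultaneously finite or infinite, to the trace inequality
$$
iii)\quad A\,\tau([f,f]) \leq \tau\Big(\sum_{i \in \ind}|[f,\phi_i]|^2\Big) \leq B\,\tau([f,f]), \quad \forall\, f \in V.
$$

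The implication $ii)\Rightarrow iii)$ is then immediate, since $\tau$ is positive and linear. For the converse $iii)\Rightarrow ii)$ I would exploit multiplicative invariance to localize by a projection. Given $f \in V$ and an orthogonal projection $q \in \vn(\Gamma)$, Theorem \ref{theo:multiplicatively} yields $\iso[f]q \in \iso[V]$, so there is a unique $g = \iso^{-1}(\iso[f]q) \in V$. Using \eqref{eq:rightaction} and the bracket formula \eqref{eq:Helsonbracket} one computes $[g,\phi_i] = [f,\phi_i]q$ and $[g,g] = q[f,f]q$, hence $|[g,\phi_i]|^2 = q\,|[f,\phi_i]|^2\,q$. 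Applying $iii)$ to $g$ gives, for every such projection $q$,
$$
A\,\tau\big(q[f,f]q\big) \leq \tau\Big(q\big(\sum_{i \in \ind}|[f,\phi_i]|^2\big)q\Big) \leq B\,\tau\big(q[f,f]q\big).
$$

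The main obstacle is to upgrade these compressed scalar inequalities to the operator inequality $ii)$, and this is exactly where the spectral projection argument enters. Since $iii)$ forces $\tau(\sum_{i\in\ind}|[f,\phi_i]|^2) \leq B\|f\|_\Hil^2 < \infty$, the positive operator $\sum_{i\in\ind}|[f,\phi_i]|^2$ lies in $L^1(\vn(\Gamma))$, so $P := \sum_{i\in\ind}|[f,\phi_i]|^2 - A[f,f]$ is a densely defined closed self-adjoint operator affiliated to $\vn(\Gamma)$, whose spectral projections belong to $\vn(\Gamma)$ by \cite[Theorem 5.3.4]{Sun97}. If the left inequality of $ii)$ failed for some $f \in V$, then $P \not\geq 0$, and taking $q = \chi_{(-\infty,0)}(P) \neq 0$ gives $qPq \leq 0$ with $qPq \neq 0$, whence $\tau(qPq) < 0$ by faithfulness of $\tau$. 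But $\tau(qPq) = \tau(q(\sum_{i\in\ind}|[f,\phi_i]|^2)q) - A\,\tau(q[f,f]q) \geq 0$ by the compressed inequality above, a contradiction. The right-hand inequality of $ii)$ is handled identically with $Q := B[f,f] - \sum_{i\in\ind}|[f,\phi_i]|^2$ and $q = \chi_{(-\infty,0)}(Q)$, completing the equivalence.
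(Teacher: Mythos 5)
Your proposal is correct and follows essentially the same route as the paper: reduce $i)$ to the trace inequality via Plancherel and Property III), note $ii)\Rightarrow iii)$ is trivial, and for $iii)\Rightarrow ii)$ use Theorem \ref{theo:multiplicatively} to realize $\iso[f]q$ as $\iso[g]$ for some $g$ in the span, compute the compressed brackets $[g,\phi_i]=[f,\phi_i]q$ and $[g,g]=q[f,f]q$, and contradict the trace inequality on the spectral projection where the operator inequality would fail. The only difference is cosmetic (you compress by an arbitrary projection first and then specialize to the negative spectral projection of the signed difference, whereas the paper goes directly to the spectral projection of the failure set), so the two arguments coincide.
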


\begin{proof}
The structure of the proof is similar to that of the previous theorem.

By the definition of bracket map and Plancherel Theorem, for all $f \in \ol{\vsp\, E}^\Hil$ we have
$$
\sum_{\gamma \in \Gamma} |\langle f, \Pi(\gamma)\phi_i\rangle_\Hil|^2 = \sum_{\gamma \in \Gamma} |\tau(\rho(\gamma)[f,\phi_i])|^2 = \tau \big(|[f,\phi_i]|^2\big) \quad \forall \ i \in \ind
$$
so that the condition $i)$ of $E$ being a frame system is equivalent to the condition\vspace{4pt}\\
$iii) \quad A \tau([f,f]) \leq \displaystyle \sum_{i \in \ind} \tau \big(|[f,\phi_i]|^2\big) \leq B \tau([f,f])$ \ for all $f \in \ol{\vsp\,E}^\Hil$\vspace{4pt}\\
since by property III) of the bracket map $\tau([f,f]) = \|f\|_\Hil^2$.
We then prove the equivalence of $ii)$ and $iii)$. As for Theorem \ref{th:riesz}, the implication $ii) \Rightarrow iii)$ is trivial. In order to prove that $iii)$ implies $ii)$ we proceed by contradiction.
Suppose indeed that the right inequality in $ii)$ does not hold for some $f_0 \in \ol{\vsp\,E}^\Hil$, and let us define the orthogonal projection of $\vn(\Gamma)$
$$
\Proj = \chi_{{}_{(0,\infty)}}\Big(\sum_{i \in \ind} |[f_0, \phi_i]|^2 - B [f_0,f_0]\Big) .
$$

Let $W = \Ran(\Proj)$, and note that $W$ is the closed linear subspace of $\ell_2(\Gamma)$ where the right inequality in $ii)$ does not hold for $f_0$. Then
$$
\langle \Big(\sum_{i \in \ind} |[f_0, \phi_i]|^2 - B [f_0,f_0]\Big) u, u\rangle_{\ell^2(\Gamma)} > 0 \quad \forall \ u \in W
$$
which means that
$$
0 < \Proj \Big(\sum_{i \in \ind} |[f_0, \phi_i]|^2 - B [f_0,f_0]\Big) \Proj = \sum_{i \in \ind} \Proj [\phi_i , f_0][f_0,\phi_i]\Proj  - B \Proj [f_0,f_0]\Proj.
$$
Now, by $iii)$, Theorem \ref{theo:multiplicatively}, we have that since $f_0 \in \ol{\vsp\, E}^\Hil$, there exists $f_W \in \ol{\vsp\, E}^\Hil$ such that $\iso[f_0]\Proj = \iso[f_W]$. So, by Proposition \ref{prop:propertyii}
$$
[f_0,\phi_i]\Proj = \int_{\M} \iso[\phi_i](x)^* \iso[f_0](x) \Proj d\nu(x) = [f_W,\phi_i].
$$
Proceeding analogously for the other brackets, we then get
$$
0 < \sum_{i \in \ind} |[f_W, \phi_i]|^2  - B [f_W, f_W].
$$
By the linearity of $\tau$ we could then deduce that
$$
\tau\Big(\sum_{i \in \ind} |[f_W, \phi_i]|^2\Big) > B \tau([f_W,f_W])
$$
which contradicts the right inequality of $iii)$.
\end{proof}

In the case of only one generator, we can recover \cite[Th. A]{BHP15a} as a corollary. We emphasize that this type of result was first proved for the case of integer translations in \cite{BL93, BW94}.
\begin{cor}\label{cor:principal}
Let $\phi \in \Hil$, let $E = \{\Pi(\gamma)\phi : \gamma \in \Gamma\}$ and let $0 < A \leq B < \infty$. Then 
\begin{itemize}
\item[i)] $E$ is a Riesz sequence if and only if $A \Id_{\ell_2(\Gamma)} \leq [\phi,\phi] \leq B \Id_{\ell_2(\Gamma)}$;
\item[ii)] $E$ is a frame sequence if and only if $A s_{[\phi,\phi]} \leq [\phi,\phi] \leq B s_{[\phi,\phi]}$.
\end{itemize}

\end{cor}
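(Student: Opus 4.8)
The plan is to obtain both statements by specializing Theorems \ref{th:riesz} and \ref{theo:frames} to the case in which $\ind$ is a singleton, and then translating the resulting operator inequalities into conditions on $[\phi,\phi]$ alone. Throughout I write $\Omega = [\phi,\phi]$, which is positive with support $s_\Omega = s_{[\phi,\phi]}$.

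For $i)$, condition $ii)$ of Theorem \ref{th:riesz} with a single generator reads $A\,F^*F \leq F^*\Omega F \leq B\,F^*F$ for every $F \in \vn(\Gamma)$. If $A\Id_{\ell_2(\Gamma)} \leq \Omega \leq B\Id_{\ell_2(\Gamma)}$, then conjugating by an arbitrary $F \in \vn(\Gamma)$, which preserves operator order, yields exactly this family of inequalities, so $E$ is a Riesz sequence. Conversely, testing the inequality with $F = \rho(\id) = \Id_{\ell_2(\Gamma)}$ gives back $A\Id_{\ell_2(\Gamma)} \leq \Omega \leq B\Id_{\ell_2(\Gamma)}$. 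This settles $i)$ with essentially no computation.

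For $ii)$, condition $ii)$ of Theorem \ref{theo:frames} with a single generator reads $A[f,f] \leq |[f,\phi]|^2 \leq B[f,f]$ for all $f \in \psis{\phi}$. The first step is to parametrize such $f$ using Proposition \ref{prop:BDR}: every $f \in \psis{\phi}$ corresponds to some $F \in L^2(\vn(\Gamma),\Omega)$ with $\iso[f] = \iso[\phi]F$, and then $[f,\phi] = \Omega F$. Inserting $\iso[f] = \iso[\phi]F$ into the integral expression of the bracket from Proposition \ref{prop:propertyii} gives $[f,f] = F^*\Omega F$, while $|[f,\phi]|^2 = (\Omega F)^*(\Omega F) = F^*\Omega^2 F$ since $\Omega$ is selfadjoint. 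Hence the frame condition becomes equivalent to
$$
A\,F^*\Omega F \leq F^*\Omega^2 F \leq B\,F^*\Omega F \qquad \text{for all } F \in L^2(\vn(\Gamma),\Omega).
$$

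It then remains to show this family of inequalities is equivalent to $A s_\Omega \leq \Omega \leq B s_\Omega$. Taking $F = s_\Omega \in \hh(\Omega)$, for which $s_\Omega \Omega s_\Omega = \Omega$ and $s_\Omega \Omega^2 s_\Omega = \Omega^2$, immediately produces the operator inequality $A\Omega \leq \Omega^2 \leq B\Omega$; conversely this inequality is preserved under conjugation by any $F$, recovering the displayed family. Finally, by functional calculus one writes $\Omega^2 - A\Omega = \Omega(\Omega - A s_\Omega)$ and $B\Omega - \Omega^2 = \Omega(B s_\Omega - \Omega)$ as products of commuting positive operators, so that $A\Omega \leq \Omega^2 \leq B\Omega$ holds precisely when the spectrum of $\Omega$ lies in $\{0\}\cup[A,B]$, i.e. precisely when $A s_\Omega \leq \Omega \leq B s_\Omega$. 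I expect the main obstacle to be this last functional-calculus equivalence, which must be handled with care because $\Omega = [\phi,\phi] \in L^1(\vn(\Gamma))$ need not be bounded a priori; here I would observe that the upper frame bound forces $\Omega \leq B s_\Omega$ and hence the boundedness of $\Omega$, after which all the expressions above are genuine bounded operators and the spectral argument applies cleanly.
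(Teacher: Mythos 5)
Your proposal is correct and follows essentially the same route as the paper: specialize Theorems \ref{th:riesz} and \ref{theo:frames} to one generator, use Propositions \ref{prop:BDR} and \ref{prop:propertyii} to rewrite the frame condition as $A\,F^*[\phi,\phi]F \leq F^*[\phi,\phi]^2F \leq B\,F^*[\phi,\phi]F$, and reduce to operator inequalities on $[\phi,\phi]$. You merely make explicit (via testing with $F=\Id_{\ell_2(\Gamma)}$ or $F=s_{[\phi,\phi]}$ and the spectral argument) the equivalences the paper labels ``easily seen,'' and your remark on first extracting boundedness of $[\phi,\phi]$ from the upper bound is a sensible precaution.
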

\begin{proof}
To prove $i)$, note that by $ii)$, Theorem \ref{th:riesz} we have that $E$ is a Riesz sequence if and only if
$$
A |F|^2 \leq F^*[\phi,\phi]F \leq B |F|^2 \quad \forall \ F \in \vn(\Gamma).
$$
which is easily seen to be equivalent to $A \Id_{\ell_2(\Gamma)} \leq [\phi,\phi] \leq B \Id_{\ell_2(\Gamma)}$.

To prove $ii)$, by $ii)$, Theorem \ref{theo:frames} we have that $E$ is a frame sequence if and only if
$$
A [f,f] \leq \displaystyle |[f, \phi]|^2 \leq B [f,f] \quad \forall \ f \in \ol{\vsp\, E}^\Hil = \psis{\phi}.
$$
Now, by $ii)$, Proposition \ref{prop:BDR}, we have that for any $f \in \psis{\phi}$ there exits a unique $F \in L^2(\vn(\Gamma),[\phi,\phi])$ such that $\iso[f] = \iso[\phi] F$ and $[f, \phi] = [\phi, \phi]F$. By Proposition \ref{prop:propertyii}, we also have that
$$
[f,f] = F^*[\phi,\phi]F, 
$$
so, recalling Proposition \ref{prop:isometry}, the previous inequalities read
$$
A F^*[\phi,\phi]F \leq \displaystyle F^*|[\phi, \phi]|^2F \leq B F^*[\phi,\phi]F \quad \forall \ F \in L^2(\vn(\Gamma),[\phi,\phi]) .
$$
This is easily seen to be equivalent to $A s_{[\phi,\phi]} \leq [\phi,\phi] \leq B s_{[\phi,\phi]}$.
\end{proof}

\section{Relevant examples}\label{sec:EXAMPLES}

In this section we provide examples of brackets and Helson maps in different settings.

\subsection{Integer translations on $L^2(\R)$.}\label{sec:translates} 
Let $\Gamma$ be a uniform lattice of an LCA group $G$, i.e. a discrete and countable subgroup such that $G/\Gamma$ is compact, and let $T : \Gamma \to \mathcal{U}(L^2(G))$ be  given by $T(\gamma)f(x) = f(x - \gamma)$. A fundamental tool for analyzing the structure of shift-invariant subspaces is the so-called fiberization mapping (see \cite[Prop. 3.3]{CP10}):
\begin{align*}
\iso : L^2(G) \to L^2(\Omega,\ell_2(\Gamma^\bot))\\
\iso[f](\omega) = \{\F_G f(\omega + \lambda)\}_{\lambda \in \Gamma^\bot}
\end{align*}
where $\Omega$ is a measurable section of the quotient $\wh{G}/\Gamma^\bot$, $\Gamma^\bot$ is the annihilator of $\Gamma$ (which is discrete), $\wh G$ is de dual group of $G$, and $\F_G f(\chi) = \int_G f(x) \ol{\chi(x)} dx$,  for $\chi\in \wh G$, is the Fourier transform in the LCA group $G$. Recall that the annihilator of a group $K\subseteq G$ is the closed subgroup of $\wh G$ given by  $K^\perp=\{\chi\in\wh G: \chi(\kappa)=1 \ \forall \, \kappa \in K\}$.

We want to show that this map can actually be obtained as a special case of the construction given by Proposition \ref{prop:periodization}.

First of all one must take into account that, when $\Gamma$ is abelian, there is an isomorphism between $\vn(\Gamma)$ and $\wh{\Gamma} \approx \wh{G}/\Gamma^\bot \approx \Omega$, provided by Pontryagin duality (see also \cite{BHP17}). Therefore,   the target space of the map $U_\Psi$ of  Proposition \ref{prop:periodization} is
$$
\ell_2(\ind,L^2(\vn(\Gamma))) \approx \ell_2(\ind,L^2(\Omega)) \approx L^2(\Omega,\ell_2(\ind)) .
$$
Now, for the sake of simplicity, we will work in detail the case $G = \R$, $\Gamma = \Z$ and $T$ the integer translations on $L^2(\R)$, i.e. $T(k)\varphi(x) = \varphi(x - k)$ (see \cite{Bow00}).

Let $\ind = \Gamma^\bot = \Z$ be the annihilator of $\Gamma = \Z$. Consider $\Psi = \{ \psi_j\}_{j \in \Z}\subset L^2(\R)$ be the Shannon system
\begin{equation}\label{eq:dummy1}
\F_\R \psi_j = \chi_{[j,j+1]}, \,\, j\in\Z.
\end{equation}
If $ \langle \psi_j \rangle_\Z = \ol{\vsp\{T(k)\psi_j \, : \, k \in \Z\}}^{L^2(\R)}$, it is clear that
$$
L^2(\R) = \bigoplus_{j \in \Z} \langle \psi_j \rangle_\Z
$$
because $\F_\R T(k)\psi_j(\omega) = \chi_{[j,j+1]}e^{-2\pi i k \omega}$. Moreover, the integer translates of each $\psi_j$ generate an orthonormal system, so that $[\psi_j,\psi_j] = \Id_{\ell_2(\Z)}$ (see \cite[Theorem A]{BHP15a}). Then, $\Psi = \{ \psi_j\}_{j \in \Z}\subset L^2(\R)$ is a family as in Lemma \ref{lem:generators} and  the map of Proposition \ref{prop:periodization} is $U_{\Psi}[\varphi] = \{S_{\psi_j}[\Proj_{ \langle \psi_j \rangle_\Z}\varphi]\}_{j \in \Z}$ for $\varphi \in L^2(\R)$. Write
$$
\Proj_{ \langle \psi_j \rangle_\Z}\varphi(x) = \sum_{k \in \Z} a^j_k \psi_j(x - k) = \sum_{k \in \Z} a^j_k T(k)\psi_j(x)
$$
with $a^j_k = \langle \varphi, T(k)\psi_j\rangle_{L^2(\R)}$. 
Then,
\begin{equation}\label{eq:integerisometry}
U_\Psi[\varphi] = \Big\{\sum_{k \in \Z} a_k^j \rho(k)^*\Big\}_{j \in \Z},
\end{equation}
where $\{\rho(k)\}_{k \in \Z}$ is the sequence of translation operators in $\ell^2(\Z)$.

We now show that $U_\Psi$ gives rise to the map $\iso : L^2(\R) \to L^2([0,1],\ell_2(\Z))$ given by
$\iso[f](\omega) = \{\F_\R f(\omega + j)\}_{j \in \Z}$ by replacing the integer translations $\{\rho(k)\}_{k \in \Z}$ of $\ell^2(\Z)$ with the characters $\{e^{2\pi i k \cdot}\}_{k \in \Z}$ of $\Z$. 

By definition, $\F_\R \psi_j (\omega + l) = \delta_{j,l}$ for all $j, l \in \Z$ and a.e. $\omega \in [0,1)$. Thus, for $\varphi \in L^2(\R)$
\begin{align*}
\F_\R \Proj_{ \langle \psi_j \rangle_\Z}\varphi (\omega + l) & = \sum_{k\in\Z} a^j_k \F_\R T(k)\psi_j(\omega + l) = \sum_{k\in\Z} a^j_k \chi_{[j,j+1]}(\omega + l) e^{-2\pi i k\omega}\\
& = \sum_{k\in\Z} a^j_k \delta_{j,l} e^{-2\pi i k\omega} \ , \quad \textnormal{a.e.} \ \omega \in [0,1).
\end{align*}
Then,
$$
\sum_{j \in \Z} \F_\R \Proj_{ \langle \psi_j \rangle_\Z}\varphi (\omega + l) = \sum_{k\in\Z} a^l_k e^{-2\pi i k\omega} \ , \quad \textnormal{a.e.} \ \omega \in [0,1).
$$
Therefore, $U_\Psi$ becomes
$$
\Big\{\sum_{k \in \Z} a_k^j e^{-2\pi i k\omega}\Big\}_{j \in \Z} = \Big\{\sum_{j \in \Z} \F_\R \Proj_{ \langle \psi_j \rangle_\Z}\varphi (\omega + l)\Big\}_{l \in \Z} = \Big\{\F_\R \varphi (\omega + l)\Big\}_{l \in \Z}=\iso[f](\omega),
$$
for a.e. $\omega \in [0,1)$.

For the general case, consider the family $\F_G \psi_\delta = \chi_{\Omega + \delta} \, , \ \delta \in \Gamma^\bot$ instead of (\ref{eq:dummy1}).
The rest of the details are left to the reader.

\subsection{Measurable group actions on $L^2(\meas,\mu)$ and Zak transform.}
A particular construction of a Helson map can be given in terms of the Zak transform whenever the representation $\Pi$ arises from a measurable action of a discrete group on a measure space. This was first considered in the abelian setting in \cite{HSWW10a} and then in \cite{BHP15b}. For the nonconmmutative case, the Zak transform was taken into consideration in \cite{BHP15a}. For the sake of completeness we include its construction here.

Consider a $\sigma$-finite measure space $(\meas,\mu)$, $\Gamma$ a countable discrete group  and let $\sigma: \Gamma  \times \meas \to \meas$ be a quasi $\Gamma$-invariant measurable action of $\Gamma$ on $\meas$. This means that for each $\gamma \in \Gamma$ the map $x \mapsto \sigma_\gamma (x) = \sigma(\gamma,x)$ is $\mu$-measurable, that for all $\gamma, \gamma' \in \Gamma$ and almost all $x \in \meas$ it holds $\sigma_\gamma (\sigma_{\gamma'} (x)) = \sigma_{\gamma \gamma'}(x)$ and $\sigma_\id (x) = x$, and that for each $\gamma \in \Gamma$  the measure $\mu_\gamma$ defined by $\mu_\gamma(E) = \mu(\sigma_\gamma (E))$ is absolutely continuous with respect to $\mu$ with positive Radon-Nikodym derivative. Let us indicate the family of associated Jacobian densities with the measurable function $J_\sigma: \Gamma \times \meas \to \R^+$ given by
$$
d\mu(\sigma_\gamma (x)) = J_\sigma (\gamma, x)\, d\mu (x) .
$$
We can then define a unitary representation $\Pi_\sigma$ of $\Gamma$ on $L^2(\meas,\mu)$ as
\begin{equation}\label{eq:measurablerep}
\Pi_\sigma(\gamma)\varphi(x) = J_\sigma (\gamma^{-1}, x)^\frac12 \varphi(\sigma_{\gamma^{-1}}(x)) . 
\end{equation}
We say that the action $\sigma$ has the tiling property if there exists a $\mu$-measurable subset $C \subset \meas$ such that the family
$\{\sigma_\gamma (C)\}_{\gamma \in \Gamma}$ is a $\mu$-almost disjoint covering of $\meas$, i.e.
$\mu\big(\sigma_{\gamma_1}(C) \cap \sigma_{\gamma_2} (C)\big) = 0$ for $\gamma_1 \neq \gamma_2$ and
$$
\mu\bigg(\meas \setminus \bigcup_{\gamma \in \Gamma}\,\sigma_\gamma (C)\bigg) = 0 .
$$

Following \cite{BHP15a}, the noncommutative Zak transform of $\varphi \in L^2(\meas,\mu)$ associated to the action $\sigma$ is given by
$$
Z_\sigma[\varphi](x) = \sum_{\gamma \in \Gamma} \Big(\big(\Pi_\sigma(\gamma)\varphi\big)(x)\Big) \rho(\gamma), \quad x \in \meas .
$$

The following result is a slight improvement of \cite[i), Th. B]{BHP15a}, showing that $Z_\sigma$ defines an isometry that is surjective on the whole $L^2((C,\mu),L^2(\vn(\Gamma)))$.
\begin{prop}\label{prop:Zak}
Let $\sigma$ be a quasi-$\Gamma$-invariant action of the countable discrete group $\Gamma$ on the measure space $(\meas,\mu)$, and let $\Pi_\sigma$ be the unitary representation given by  (\ref{eq:measurablerep}) on $L^2(\meas,\mu)$. If $\sigma$ has the tiling property with tiling set $C$, then the Zak transform $Z_\sigma$ defines an isometric isomorphism
$$
Z_\sigma : L^2(\meas,\mu) \to L^2((C,\mu),L^2(\vn(\Gamma)))
$$
satisfying the condition
\begin{equation}\label{eq:Helson-Zak}
Z_\sigma[\Pi_\sigma(\gamma)\varphi] = Z_\sigma[\varphi] \rho(\gamma)^* \, , \quad  \forall \, \gamma \in \Gamma , \ \forall \, \varphi \in L^2(\meas,\mu) .
\end{equation}
Hence, $Z_\sigma$ is a Helson map for the representation $\Pi_\sigma$. As a consequence, the bracket map for $\Pi_\sigma$ can be written as
$$
[\varphi, \psi] = \int_{\meas} Z_\sigma[\psi](x)^* Z_\sigma[\varphi](x) d\mu(x).
$$
\end{prop}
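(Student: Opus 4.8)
The plan is to verify the three defining features of a Helson map for $Z_\sigma$ — that it is an isometry, that it is surjective onto $L^2((C,\mu),L^2(\vn(\Gamma)))$, and that it satisfies the intertwining relation (\ref{eq:Helson-Zak}) — and then to read off the bracket from Proposition \ref{prop:propertyii}.

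First I would compute the norm. Since $\{\rho(\gamma)\}_{\gamma\in\Gamma}$ is an orthonormal basis of $L^2(\vn(\Gamma))$, for each $x$ one has $\|Z_\sigma[\varphi](x)\|_2^2 = \sum_{\gamma\in\Gamma} |(\Pi_\sigma(\gamma)\varphi)(x)|^2$, whence
$$
\norm{Z_\sigma[\varphi]}^2 = \int_C \sum_{\gamma\in\Gamma} J_\sigma(\gamma^{-1},x)\,|\varphi(\sigma_{\gamma^{-1}}(x))|^2\, d\mu(x).
$$
The key step is the change of variables $y=\sigma_{\gamma^{-1}}(x)$, under which $d\mu(y)=J_\sigma(\gamma^{-1},x)\,d\mu(x)$ and $C$ is mapped onto $\sigma_{\gamma^{-1}}(C)$; this turns the $\gamma$-th summand into $\int_{\sigma_{\gamma^{-1}}(C)}|\varphi(y)|^2\, d\mu(y)$. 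Because $\gamma\mapsto\gamma^{-1}$ is a bijection of $\Gamma$, the family $\{\sigma_{\gamma^{-1}}(C)\}_{\gamma\in\Gamma}$ is exactly the tiling $\{\sigma_\gamma(C)\}_{\gamma\in\Gamma}$, so summing and using the almost-disjoint covering property gives $\norm{Z_\sigma[\varphi]}^2=\int_\meas |\varphi|^2\, d\mu=\|\varphi\|^2_{L^2(\meas,\mu)}$. Thus $Z_\sigma$ is an isometry.

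For surjectivity — which is the improvement over \cite[Th. B]{BHP15a} and the part I expect to require the most care — I would exploit the tiling to identify the image of functions supported on a single tile. If $\varphi$ is supported on $\sigma_{\gamma_0^{-1}}(C)$, then for $x\in C$ the quantity $(\Pi_\sigma(\gamma)\varphi)(x)=J_\sigma(\gamma^{-1},x)^{1/2}\varphi(\sigma_{\gamma^{-1}}(x))$ vanishes unless $\sigma_{\gamma^{-1}}(x)\in\sigma_{\gamma_0^{-1}}(C)$, i.e. (by almost-disjointness) unless $\gamma=\gamma_0$; hence $Z_\sigma[\varphi](x)=g(x)\rho(\gamma_0)$ with $g=(\Pi_\sigma(\gamma_0)\varphi)|_C$. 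The same change of variables as above shows that $\varphi\mapsto (\Pi_\sigma(\gamma_0)\varphi)|_C$ is an isometric isomorphism of $L^2(\sigma_{\gamma_0^{-1}}(C),\mu)$ onto $L^2(C,\mu)$, so $g$ ranges over all of $L^2(C,\mu)$. Consequently the range of $Z_\sigma$ contains every element $g\,\rho(\gamma_0)$ with $g\in L^2(C,\mu)$ and $\gamma_0\in\Gamma$; since $\{\rho(\gamma_0)\}_{\gamma_0\in\Gamma}$ is an orthonormal basis of $L^2(\vn(\Gamma))$, finite sums of such elements are dense in $L^2((C,\mu),L^2(\vn(\Gamma)))$. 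As the range of an isometry defined on a complete space is closed, density forces surjectivity. Equivalently, one may glue together the tile-by-tile preimages to build $\varphi\in L^2(\meas,\mu)$ directly from the Fourier-coefficient functions $\langle\Phi(\cdot),\rho(\gamma)\rangle_2$ of a prescribed $\Phi$.

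The intertwining relation is then a formal manipulation: using that $\rho$ is a homomorphism and that $\Pi_\sigma(\gamma)\Pi_\sigma(\gamma_0)=\Pi_\sigma(\gamma\gamma_0)$, one reindexes
$$
Z_\sigma[\Pi_\sigma(\gamma_0)\varphi](x)=\sum_{\gamma\in\Gamma}(\Pi_\sigma(\gamma\gamma_0)\varphi)(x)\rho(\gamma)=\Big(\sum_{\gamma'\in\Gamma}(\Pi_\sigma(\gamma')\varphi)(x)\rho(\gamma')\Big)\rho(\gamma_0)^*,
$$
which is precisely (\ref{eq:Helson-Zak}). Having shown that $Z_\sigma$ is a surjective isometry satisfying the Helson property, it is a Helson map in the sense of Definition \ref{def:Helsonmap}, and the stated bracket formula is then immediate from Proposition \ref{prop:propertyii} applied with the measure space $(\M,\nu)=(C,\mu)$, so that the integral is understood over the tiling set $C$.
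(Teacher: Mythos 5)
Your proposal is correct, and its overall architecture (isometry, intertwining by reindexing, surjectivity, then the bracket via Proposition \ref{prop:propertyii} with $(\M,\nu)=(C,\mu)$) matches the paper's. The one place where you genuinely diverge is surjectivity: the paper writes down an explicit preimage, defining $\psi$ tile by tile on $\sigma_{\gamma^{-1}}(C)$ by $\psi(x) = J_\sigma(\gamma^{-1},\sigma_\gamma(x))^{-1/2}\,\tau\bigl(\varPsi(\sigma_\gamma(x))\rho(\gamma)^*\bigr)$, checks via the change of variables and Plancherel that $\|\psi\|_{L^2(\meas,\mu)}=\norm{\varPsi}$, and verifies $Z_\sigma[\psi]=\varPsi$ directly --- which has the added payoff of an explicit inversion formula for $Z_\sigma$ (equation (\ref{eq:Zakinversion}) in the paper). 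You instead observe that functions supported on a single tile $\sigma_{\gamma_0^{-1}}(C)$ are mapped onto $\{g\,\rho(\gamma_0): g\in L^2(C,\mu)\}$, that the span of such elements is dense, and that the range of an isometry on a complete space is closed; this is a clean and slightly more conceptual route, at the cost of not exhibiting the inverse (though your closing remark about gluing tile-by-tile preimages from the Fourier coefficients of $\varPhi$ is precisely the paper's construction). A further small difference is that you supply the isometry computation in full, whereas the paper defers it to \cite[Th. B]{BHP15a}; your change of variables $y=\sigma_{\gamma^{-1}}(x)$ with $d\mu(y)=J_\sigma(\gamma^{-1},x)\,d\mu(x)$ and the bijection $\gamma\mapsto\gamma^{-1}$ reproduce that argument correctly.
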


\begin{proof}
The isometry can be proved as in \cite[Th. B]{BHP15a}, while property (\ref{eq:Helson-Zak}) can be obtained explicitly by
\begin{align*}
Z_\sigma[\Pi_\sigma(\gamma)\varphi] & = \sum_{\gamma'} \Big(\big(\Pi_\sigma(\gamma'\gamma)\varphi\big)(x)\Big) \rho(\gamma') = \sum_{\gamma''} \Big(\big(\Pi_\sigma(\gamma'')\varphi\big)(x)\Big) \rho(\gamma''\gamma^{-1}) \\
& = Z_\sigma[\varphi] \rho(\gamma)^*.
\end{align*}
To prove surjectivity, take $\varPsi \in L^2((C,\mu),L^2(\vn(\Gamma)))$ and for each $\gamma \in \Gamma$ define
\begin{equation}\label{eq:Zakinversion}
\psi(x) = J_\sigma (\gamma^{-1}, \sigma_{\gamma}(x))^{-\frac12} \tau \big( \varPsi(\sigma_{\gamma}(x)) \rho(\gamma)^* \big) \quad \textnormal{a.e.} \ x \in \sigma_{\gamma^{-1}}(C) .
\end{equation}
Such a $\psi$ belongs to $L^2(\meas,\mu)$, since by the tiling property it is measurable and its norm reads
\begin{align*}
\|\psi\|_{L^2(\meas,\mu)}^2 & = \sum_{\gamma \in \Gamma} \int_{\sigma_{\gamma^{-1}}(C)} J_\sigma (\gamma^{-1}, \sigma_{\gamma}(x))^{-1} \Big|\tau \big( \varPsi(\sigma_{\gamma}(x)) \rho(\gamma)^* \big)\Big|^2 d\mu(x)\\
& = \sum_{\gamma \in \Gamma} \int_{C} J_\sigma (\gamma^{-1}, y)^{-1} |\tau (\varPsi(y) \rho(\gamma)^* )|^2 J_\sigma (\gamma^{-1}, y)d\mu(y)
\end{align*}
where the last identity is due to the definition of the Jacobian density, because 
$d\mu(x)=d\mu(\sigma_{\gamma^{-1}}(y))=J_\sigma (\gamma^{-1}, y)d\mu(y)$.
Then, by Plancherel Theorem
$$
\|\psi\|_{L^2(\meas,\mu)}^2 = \int_C \sum_{\gamma \in \Gamma} |\tau (\varPsi(y) \rho(\gamma)^* )|^2  d\mu(y) = \int_C \|\varPsi(y)\|_2^2  d\mu(y)  
$$
so that $\|\psi\|_{L^2(\meas,\mu)}^2 = \|\varPsi\|^2_{L^2((C,\mu),L^2(\vn(\Gamma)))}<+\infty$. By applying the Zak transform to $\psi$ we then have that, for a.e. $x \in C$,
\begin{align*}
Z_\sigma[\psi](x) = \sum_{\gamma \in \Gamma} J_\sigma (\gamma^{-1}, x)^\frac12 \psi(\sigma_{\gamma^{-1}}(x)) \rho(\gamma) = \sum_{\gamma \in \Gamma} \tau ( \varPsi(x) \rho(\gamma)^* \big) \rho(\gamma) = \varPsi(x)
\end{align*}
again by Plancherel Theorem. This proves surjectivity and in particular shows that (\ref{eq:Zakinversion}) provides an explicit inversion formula for $Z_\sigma$.
\end{proof}

\begin{rem}
The Zak transform is actually directly related to the isometry $S_\psi$ introduced in (\ref{eq:isometry}), since for all $F \in \hh([\psi,\psi])$ (see Section \ref{sec:weight}) it holds
$$
F = S_\psi \Big(\tau \big( Z_\sigma[\psi](\cdot) F \big)\Big).
$$
First notice that $\tau \big( Z_\sigma[\psi](\cdot) F \big) \in \psis{\psi}$. Indeed, let $F \in \vsp\{\rho(\gamma)\}_{\gamma \in \Gamma} \cap \hh([\psi,\psi])$, and denote with $\{\wh{F}(\gamma)\}_{\gamma \in \Gamma}$ its Fourier coefficients. By the orthonormality of $\{\rho(\gamma)\}_{\gamma \in \Gamma}$ in $L^2(\vn(\Gamma))$ it holds
$$
\tau \big( Z_\sigma[\psi](x) F \big) = \sum_{\gamma, \gamma' \in \Gamma} \big(\Pi_\sigma(\gamma) \psi\big) (x) \wh{F}(\gamma') \, \tau(\rho(\gamma)\rho(\gamma')^*) = \sum_{\gamma \in \Gamma} \wh{F}(\gamma) \Pi_\sigma(\gamma)\psi(x)
$$
for a.e. $x \in \meas$.
Therefore $\tau \big( Z_\sigma[\psi](\cdot) F \big) \in \vsp\{\Pi_\sigma(\gamma)\psi\}$. Consequently
$$
S_\psi \Big(\tau \big( Z_\sigma[\psi](\cdot) F \big)\Big) = s_{[\psi,\psi]} \sum_{\gamma \in \Gamma} \wh{F}(\gamma) \rho(\gamma)^* = s_{[\psi,\psi]} F = F \, ,
$$
which can be extended to the whole $\hh([\psi,\psi])$ by density.
For a relationship between the Zak transform and the global isometry $U_\Psi$ of Proposition \ref{prop:periodization} in the setting of LCA groups, see \cite[Prop. 6.7]{BHP15b}.
\end{rem}

\subsection{A two-pronged comb in $\ell_2(\Gamma)$}

In this subsection, we study  properties of a two-pronged comb $f$ of $\ell_2(\Gamma)$. We shall analyze when it generates the whole $\ell_2(\Gamma)$ and under which conditions the system $\{\lambda(\gamma)f \, : \, \gamma \in \Gamma\}$ 
has reproducing properties.  
 
To begin with, we recall that a two-pronged comb $f\in \ell_2(\Gamma)$ is a sequence of the form $f = a \delta_{\gamma_1} + b \delta_{\gamma_2} $  for $\gamma_1, \gamma_2 \in \Gamma$, with $\gamma_1 \neq \gamma_2$, and $a, b \in \C\setminus\{0\}$.  We denote by $V(f)$ the left-invariant space generated by $f$,  that is
$$
V(f) = \ol{\vsp\{\lambda(\gamma)f\}_{\gamma \in \Gamma}}^{\ell_2(\Gamma)} .
$$

The following lemma states conditions for  a two-pronged comb to generate $\ell_2(\Gamma)$.
\begin{lem}\label{lem:twoprongedcomb}
Let $\gamma_1, \gamma_2 \in \Gamma$, with $\gamma_1 \neq \gamma_2$,  $a, b \in \C\setminus\{0\}$ and $f = a \delta_{\gamma_1} + b \delta_{\gamma_2} $. Let $h = \gamma_1^{-1} \gamma_2$ and $e\in \Gamma$ the identity.
\begin{itemize}
\item[i)] If there is no $n \in \N$ such that $h^n = \id$, then $V(f) = \ell_2(\Gamma)$.
\item[ii)] If there exists $n \in \N$ such that $h^n = \id$, and $a \neq \pm b$, then $V(f) = \ell_2(\Gamma)$.
\end{itemize}
\end{lem}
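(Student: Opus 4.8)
The plan is to identify $V(f)^\perp$ explicitly. Since $\lambda(\gamma)f=a\delta_{\gamma\gamma_1}+b\delta_{\gamma\gamma_2}$, a sequence $g\in\ell_2(\Gamma)$ lies in $V(f)^\perp$ precisely when $\ol a\,g(\gamma\gamma_1)+\ol b\,g(\gamma\gamma_2)=0$ for all $\gamma$; substituting $x=\gamma\gamma_1$ and recalling $h=\gamma_1^{-1}\gamma_2$, this becomes the single functional equation
\begin{equation*}
g(xh)=\mu\,g(x)\quad(\forall\,x\in\Gamma),\qquad \mu:=-\ol a/\ol b\neq0.
\end{equation*}
Equivalently, $g\in V(f)^\perp$ iff $g$ is an $\ell_2$-eigenvector of the unitary $\rho(h)$ for the eigenvalue $\mu$, i.e. $\rho(h)g=\mu g$. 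Thus the whole lemma reduces to the statement that $V(f)=\ell_2(\Gamma)$ iff $\mu$ is not an eigenvalue of $\rho(h)$ on $\ell_2(\Gamma)$, and I would read off these eigenvalues from the order of $h$, which is exactly what distinguishes the two cases.

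For $i)$ I would note that when $h$ has infinite order the relation forces $|g(xh^n)|=|\mu|^{\,n}|g(x)|$ along each bi-infinite orbit $\{xh^n:n\in\Z\}$, and $\sum_{n\in\Z}|\mu|^{2n}=\infty$ for every $\mu\neq0$ (one of the two tails diverges according as $|\mu|\ge1$ or $|\mu|\le1$). Hence $g(x)=0$ on every orbit, so $V(f)^\perp=\{0\}$; equivalently $\rho(h)$, being a direct sum of bilateral shifts over the cosets $x\langle h\rangle$, has no eigenvalues at all. This yields $V(f)=\ell_2(\Gamma)$.

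For $ii)$ let $m$ be the order of $h$. Each orbit is now a cycle $\{x,xh,\dots,xh^{m-1}\}$, and going once around gives $g(x)=g(xh^m)=\mu^m g(x)$, i.e. $(\mu^m-1)g(x)=0$. So a nonzero $\ell_2$-solution exists iff $\mu^m=1$, meaning the eigenvalues of $\rho(h)$ are exactly the $m$-th roots of unity; consequently $V(f)=\ell_2(\Gamma)$ iff $\mu^m\neq1$, and proving the lemma amounts to deducing $\mu^m\neq1$ from the hypothesis. This is the step I expect to be the crux. One checks directly that $\mu=+1\iff a=-b$ and $\mu=-1\iff a=b$, so $a\neq\pm b$ is precisely the condition $\mu^2\neq1$ and settles the case of an order-two element $h$. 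For $m\ge3$, however, $\mu$ could be a primitive $m$-th root of unity while still $a\neq\pm b$, so the genuinely needed condition is $\mu^m\neq1$ (equivalently $a^m\neq(-b)^m$); this is the point that must be treated carefully rather than by the bare inequality $a\neq\pm b$, and it is where I anticipate the real difficulty.

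Alternatively, the same dichotomy can be reached through the Fourier picture of Proposition \ref{prop:Helsonleft}. By Theorem \ref{theo:Fourierinvariance} together with Corollary \ref{cor:Helson}, $V(f)=\ell_2(\Gamma)$ iff the right module $\F_\Gamma f\cdot\vn(\Gamma)$ is dense in $L^2(\vn(\Gamma))$; factoring $\F_\Gamma f=\big(a\,\Id+b\,\rho(h)^{-1}\big)\rho(\gamma_1)^*$ and peeling off the unitary $\rho(\gamma_1)^*$ reduces this, via Plancherel, to injectivity of $\ol a\,\Id+\ol b\,\rho(h)$ on $\ell_2(\Gamma)$, that is, once more to $\mu$ not being an eigenvalue of $\rho(h)$. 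This route makes transparent that the only obstruction is whether $-b/a$ is a root of unity of order dividing that of $h$.
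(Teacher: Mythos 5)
Your reduction of the lemma to the triviality of $\Ker\big(\ol{a}\,\rho(\gamma_1)+\ol{b}\,\rho(\gamma_2)\big)=V(f)^\perp$, followed by the analysis of the recursion $g(xh)=\mu\,g(x)$ with $\mu=-\ol{a}/\ol{b}$ along the right cosets $x\langle h\rangle$, is exactly the paper's argument: the paper phrases it as showing $\Ker\,(\F_\Gamma f)^*=\{0\}$ and derives $g(\gamma_0h^n)=(-1)^n(\ol{a}/\ol{b})^n g(\gamma_0)$. Your proof of part $i)$ coincides with the paper's and is complete: for $h$ of infinite order the orbit is bi-infinite and one of the two geometric tails of $\sum_{n\in\Z}|\mu|^{2n}$ diverges, forcing $g=0$.

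The point you flag in part $ii)$ as ``the real difficulty'' is not a defect of your argument; it is a genuine error in the lemma and in the paper's own proof. The paper asserts that $(-1)^n(\ol{a}/\ol{b})^n=1$, i.e.\ $\mu^n=1$, can hold only when $a=\pm b$; this fails as soon as $n\ge 3$, since $\mu$ may be a primitive $n$-th root of unity. Concretely, take $\Gamma=\Z_4$ and $f=\delta_0+i\delta_1$, so $a=1$, $b=i$, $a\neq\pm b$, $h=1$ has order $4$ and $\mu=-1/(-i)=-i$: the nonzero vector $g=(1,-i,-1,i)$ satisfies $g(x+1)=-i\,g(x)$ and is orthogonal to every $\lambda(j)f$, whence $V(f)\neq\ell_2(\Z_4)$. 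Your diagnosis is the correct repair: if $m$ is the order of $h$, a nonzero element of $V(f)^\perp$ exists if and only if $\mu^m=1$ (one builds it supported on a single finite coset), so the hypothesis needed in $ii)$ is $(-a/b)^m\neq 1$, which reduces to $a\neq\pm b$ only for $m\le 2$. Your alternative Fourier-side route leads to the same dichotomy. In short: part $i)$ of your proposal matches the paper and is correct, and in part $ii)$ you have correctly located, rather than failed to close, a gap that the paper's proof glosses over.
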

\begin{proof}
Since the closed subspace $V(f)$ is left-invariant, by Theorem \ref{theo:Fourierinvariance} we have that $\F_\Gamma V(f) = \Proj_{V(f)} L^2(\vn(\Gamma))$. In particular, $\F_\Gamma f = \Proj_{V(f)} \F_\Gamma f$. Thus, the condition $V(f) = \ell_2(\Gamma)$ holds whenever $\Ker(\F_\Gamma f)^* = \{0\}$. Indeed. If $\Ker(\F_\Gamma f)^* = \{0\}$, we will have that 
$\Ker(\Proj_{V(f)}) = \{0\}$ because $(\F_\Gamma f)^* = (\F_\Gamma f)^*\Proj_{V(f)}$. Thus, $\Proj_{V(f)} = \Id_{\ell_2(\Gamma)}$ and therefore $V(f) = \ell_2(\Gamma)$. 

For computing  $\Ker(\F_\Gamma f)^*$, note that, since $(\F_\Gamma f)^* = \ol{a} \rho(\gamma_1) + \ol{b}\rho(\gamma_2)$, any $g \in \Ker(\F_\Gamma f)^*$ must satisfy
\begin{equation}\label{eq:twoprongedcombkernel}
(\F_\Gamma f)^* g(\gamma) = \ol{a} g(\gamma\gamma_1) + \ol{b}g(\gamma\gamma_2) = 0 \quad \forall \ \gamma \in \Gamma. 
\end{equation}

Now, let $g\in  \Ker(\F_\Gamma f)^*$ and suppose that   $g \neq 0$.  Choose $\gamma_0 \in \Gamma$ such that $g(\gamma_0) \neq 0$ and,  for $n \in \Z$, let $\gamma = \gamma_0 h^{n-1} \gamma_1^{-1}$. Then,  by (\ref{eq:twoprongedcombkernel}) we have that $$
0 = \ol{a} g(\gamma_0 h^{n-1}) + \ol{b}g(\gamma_0 h^n) 
$$ which is equivalent to $ g(\gamma_0 h^n) = -\frac{\ol{a}}{\ol{b}} g(\gamma_0 h^{n-1})$.
Thus,
\begin{equation}\label{eq:twoprongedcombtrick}
g(\gamma_0 h^n) = (-1)^n\left(\frac{\ol{a}}{\ol{b}}\right)^n g(\gamma_0).
\end{equation}

In case $i)$, all elements $\gamma_0 h^n$ are different, so using \eqref{eq:twoprongedcombtrick} we have 
$$
\|g\|^2_{\ell_2(\Gamma)} \geq \sum_{n \in \Z} |g(\gamma_0 h^n)|^2 = |g(\gamma_0)|^2 \sum_{n \in \Z}  \left|\frac{a}{b}\right|^{2n} = + \infty
$$
for any $a, b \in \C\setminus\{0\}$. Since $g \in \ell_2(\Gamma)$, this is a contradiction, thus $g = 0$, and $\Ker(\F_\Gamma f)^* = \{0\}$.

In case $ii)$, if $n \in \N$ is such that $h^n = \id$, then from (\ref{eq:twoprongedcombtrick}) we get
$$
g(\gamma_0) = g(\gamma_0 h^n) = (-1)^n\left(\frac{\ol{a}}{\ol{b}}\right)^n g(\gamma_0).
$$
Since $g(\gamma_0)\neq 0$, we then have that $(-1)^n\left(\frac{\ol{a}}{\ol{b}}\right)^n = 1$ and this is true only when $n$ is odd and $a = -b$ or when $n$ is even and $a = b$. As a consequence, if $a \neq \pm b$, we deduce that  $\Ker(\F_\Gamma f)^* = \{0\}$
\end{proof}

\begin{rem}
The condition $a \neq \pm b$ cannot be removed from item $ii)$ in Lemma \ref{lem:twoprongedcomb}. To see this, consider $\Gamma=\Z_2$. If $a\in\C\setminus\{0\}$ and $f=a(\delta_0+\delta_1)$ then, 
$V(f)=\textrm{span}\{\delta_0+\delta_1\}$ which is not $\ell_2(\Z_2)$. If $f=a(\delta_0-\delta_1)$ then, 
$V(f)=\textrm{span}\{\delta_0-\delta_1\}$ which is not $\ell_2(\Z_2)$.

\end{rem}
We now want to study the reproducing properties of $\{\lambda(\gamma)f \, : \, \gamma \in \Gamma\}$, with $f = a \delta_{\gamma_1} + b \delta_{\gamma_2} $ a two-pronged comb. In order to do so, we need to study the bracket map $[f,f]$ which reads, using (\ref{eq:bracketleft}),
\begin{align}\label{eq:twoprogedcombbracket}
[f,f] & = |\F_\Gamma f|^2 = |a\rho(\gamma_1)^* + b \rho(\gamma_2)^*|^2 = (\ol{a}\rho(\gamma_1) + \ol{b} \rho(\gamma_2))(a\rho(\gamma_1)^* + b \rho(\gamma_2)^*)\nonumber\\
& = (|a|^2 + |b|^2) \Id_{\ell_2(\Gamma)} + \ol{a}b \rho(\gamma_1 \gamma_2^{-1}) + \ol{b}a \rho(\gamma_1 \gamma_2^{-1})^* .
\end{align}

\begin{prop}
Let $f = a \delta_{\gamma_1} + b \delta_{\gamma_2} \in \ell_2(\Gamma)$ be a two-pronged comb, with $\gamma_1 \neq \gamma_2 \in \Gamma$ and $a,b \in \C\setminus\{0\}$. If $|a| \neq |b|$, the collection $\{\lambda(\gamma)f \, : \, \gamma \in \Gamma\}$ is a Riesz basis for $\ell_2(\Gamma)$.
\end{prop}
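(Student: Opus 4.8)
The plan is to combine the characterization of Riesz sequences in Corollary \ref{cor:principal} with the completeness result of Lemma \ref{lem:twoprongedcomb}. Recall that a Riesz basis for $\ell_2(\Gamma)$ is precisely a Riesz sequence whose closed linear span is the whole space. Since $(\Gamma,\lambda,\ell_2(\Gamma))$ is a dual integrable triple with bracket $[f,g] = (\F_\Gamma g)^* \F_\Gamma f$ by Proposition \ref{prop:Helsonleft}, I would first show that $\{\lambda(\gamma)f\}_{\gamma \in \Gamma}$ is a Riesz sequence by exhibiting constants $0 < A \leq B < \infty$ with $A\,\Id_{\ell_2(\Gamma)} \leq [f,f] \leq B\,\Id_{\ell_2(\Gamma)}$, and then invoke Lemma \ref{lem:twoprongedcomb} to obtain completeness.

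For the Riesz sequence part I would start from the explicit expression \eqref{eq:twoprogedcombbracket}, namely
\[
[f,f] = (|a|^2 + |b|^2)\,\Id_{\ell_2(\Gamma)} + \ol{a}b\,\rho(\gamma_1\gamma_2^{-1}) + \ol{b}a\,\rho(\gamma_1\gamma_2^{-1})^*.
\]
Writing $u = \gamma_1\gamma_2^{-1}$ and $c = \ol{a}b$, the perturbation $c\,\rho(u) + \ol{c}\,\rho(u)^*$ is self-adjoint, and for every $g \in \ell_2(\Gamma)$ the unitarity of $\rho(u)$ together with the Cauchy--Schwarz inequality gives
\[
\big|\langle (c\,\rho(u) + \ol{c}\,\rho(u)^*) g, g\rangle\big| = \big|2\,\mathrm{Re}\,(c\,\langle \rho(u)g, g\rangle)\big| \leq 2|c|\,\|g\|^2 = 2|a||b|\,\|g\|^2 .
\]
Consequently one gets the two-sided operator estimate
\[
(|a| - |b|)^2\,\Id_{\ell_2(\Gamma)} \leq [f,f] \leq (|a| + |b|)^2\,\Id_{\ell_2(\Gamma)} .
\]
The hypothesis $|a| \neq |b|$ is exactly what makes the lower constant $A = (|a| - |b|)^2$ strictly positive, so by item $i)$ of Corollary \ref{cor:principal} the system $\{\lambda(\gamma)f\}_{\gamma \in \Gamma}$ is a Riesz sequence with bounds $A = (|a|-|b|)^2$ and $B = (|a|+|b|)^2$.

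Finally, to upgrade this to a Riesz basis for the whole space, I would observe that $|a| \neq |b|$ forces $a \neq \pm b$. Hence, regardless of whether $h = \gamma_1^{-1}\gamma_2$ has finite order, Lemma \ref{lem:twoprongedcomb} applies (case $i)$ when no power of $h$ equals the identity, case $ii)$ otherwise) and yields $V(f) = \ell_2(\Gamma)$. Thus the Riesz sequence $\{\lambda(\gamma)f\}_{\gamma \in \Gamma}$ is complete in $\ell_2(\Gamma)$, and therefore a Riesz basis for $\ell_2(\Gamma)$. The only genuinely quantitative point is the sharp spectral estimate of $[f,f]$; both the completeness and the passage to Riesz bounds are direct applications of the cited results, so I expect no substantive obstacle beyond correctly identifying the constants $(|a|\mp|b|)^2$.
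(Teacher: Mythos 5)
Your proposal is correct and follows essentially the same route as the paper: compute $[f,f]$ via \eqref{eq:twoprogedcombbracket}, bound the self-adjoint cross term to obtain $(|a|-|b|)^2\,\Id_{\ell_2(\Gamma)} \leq [f,f] \leq (|a|+|b|)^2\,\Id_{\ell_2(\Gamma)}$, conclude the Riesz sequence property from the bracket characterization, and get completeness from Lemma \ref{lem:twoprongedcomb}. The only cosmetic difference is that you bound the perturbation $\ol{a}b\,\rho(u)+\ol{b}a\,\rho(u)^*$ by Cauchy--Schwarz and unitarity, whereas the paper exhibits $2|ab|\Id_{\ell_2(\Gamma)}\mp(\ol{a}b\,\rho(u)+\ol{b}a\,\rho(u)^*)$ as operators of the form $X^*X$; these yield the identical estimate, and your explicit remark that $|a|\neq|b|$ forces $a\neq\pm b$ makes the appeal to the lemma cleaner than the paper's.
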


\begin{proof}
Observe first that, for all $\gamma \in \Gamma, a,b, \in \C$, both the operators
$$
Z^-(\gamma) = 2 |ab| \Id_{\ell_2(\Gamma)} - \ol{a}b \rho(\gamma) - \ol{b}a \rho(\gamma)^* \ , \quad Z^+(\gamma) = 2 |ab| \Id_{\ell_2(\Gamma)} + \ol{a}b \rho(\gamma) + \ol{b}a \rho(\gamma)^* 
$$
are positive. Indeed, $Z^-(\gamma) = X^*X$ with $X = \sqrt{|ab|} \Id_{\ell_2(\Gamma)} - \frac{a \ol{b}}{\sqrt{|ab|}} \rho(\gamma)^*$, while $Z^+(\gamma) = Y^*Y$ with $Y = \sqrt{|ab|} \Id_{\ell_2(\Gamma)} + \frac{a \ol{b}}{\sqrt{|ab|}} \rho(\gamma)^*$. Thus we can write
$$
[f,f] - Z^+(\gamma_1 \gamma_2^{-1}) \leq [f,f] 
\leq [f,f] + Z^-(\gamma_1 \gamma_2^{-1})
$$
which reads, by (\ref{eq:twoprogedcombbracket})
$$
(|a| - |b|)^2 \Id_{\ell_2(\Gamma)} \leq [f,f] \leq (|a| + |b|)^2 \Id_{\ell_2(\Gamma)} .
$$
By \cite[ii), Theorem A]{BHP15a}, when $|a| \neq |b|$, we then have that $\{\lambda(\gamma)f \, : \, \gamma \in \Gamma\}$ is a Riesz basis of $V(f)$, and by Lemma \ref{lem:twoprongedcomb} we have that $V(f) = \ell_2(\Gamma)$.
\end{proof}

\subsection{Dihedral action on $L^2(\R^2)$}

The smallest nonabelian group is $\Gamma = \SY_3$, the dihedral group of order 6, the symmetry group of an equilateral triangle. It is a group with 6 elements and 2 generators, which can be presented by
$$
\SY_3 = \langle a, b \, | \, a^3 = \id, b^2 = \id, ba = a^2b\rangle .
$$
We can write $\SY_3$ as a set in terms of the two generators $a$ and $b$ by
\begin{equation}\label{eq:S3order}
\SY_3 = \{\id,a,a^2,b,ab,a^2b\}.
\end{equation}
Following this order, the adjoint right regular representation is then given by
$$
\rho(a)^* = \left(
\begin{array}{cccccc}
0 & 0 & 1 & 0 & 0 & 0\\
1 & 0 & 0 & 0 & 0 & 0\\
0 & 1 & 0 & 0 & 0 & 0\\
0 & 0 & 0 & 0 & 1 & 0\\
0 & 0 & 0 & 0 & 0 & 1\\
0 & 0 & 0 & 1 & 0 & 0\\
\end{array}
\right) \, , \
\rho(b)^* = \left(
\begin{array}{cccccc}
0 & 0 & 0 & 1 & 0 & 0\\
0 & 0 & 0 & 0 & 1 & 0\\
0 & 0 & 0 & 0 & 0 & 1\\
1 & 0 & 0 & 0 & 0 & 0\\
0 & 1 & 0 & 0 & 0 & 0\\
0 & 0 & 1 & 0 & 0 & 0\\
\end{array}
\right)
$$
and their compositions.

Let $R_a \displaystyle = \left(\begin{array}{rr} -\frac12 & -\frac{\sqrt{3}}{2} \vspace{2pt}\\ \frac{\sqrt{3}}{2} & -\frac12 \end{array}\right)$ be the 120 degrees rotation on the plane, let $R_b \displaystyle = \left( \begin{array}{rr} 1 & 0 \\ 0 & -1\end{array}\right)$ be the reflection over the $x$ axis and, for $\gamma \in \SY_3$, let us denote by $R_\gamma$ the matrix obtained by the corresponding composition of these two matrices, e.g. $R_{ab} = R_aR_b$. Then we can define a representation $\pi : \SY_3 \to \mathcal{U}(L^2(\R^2))$ by $\pi(\gamma)f(x) = f(R_\gamma^{-1}x)$ for $f \in L^2(\R^2)$ and $\gamma \in \SY_3$.

\begin{figure}[h!]
\centering
\includegraphics[width=.52\textwidth]{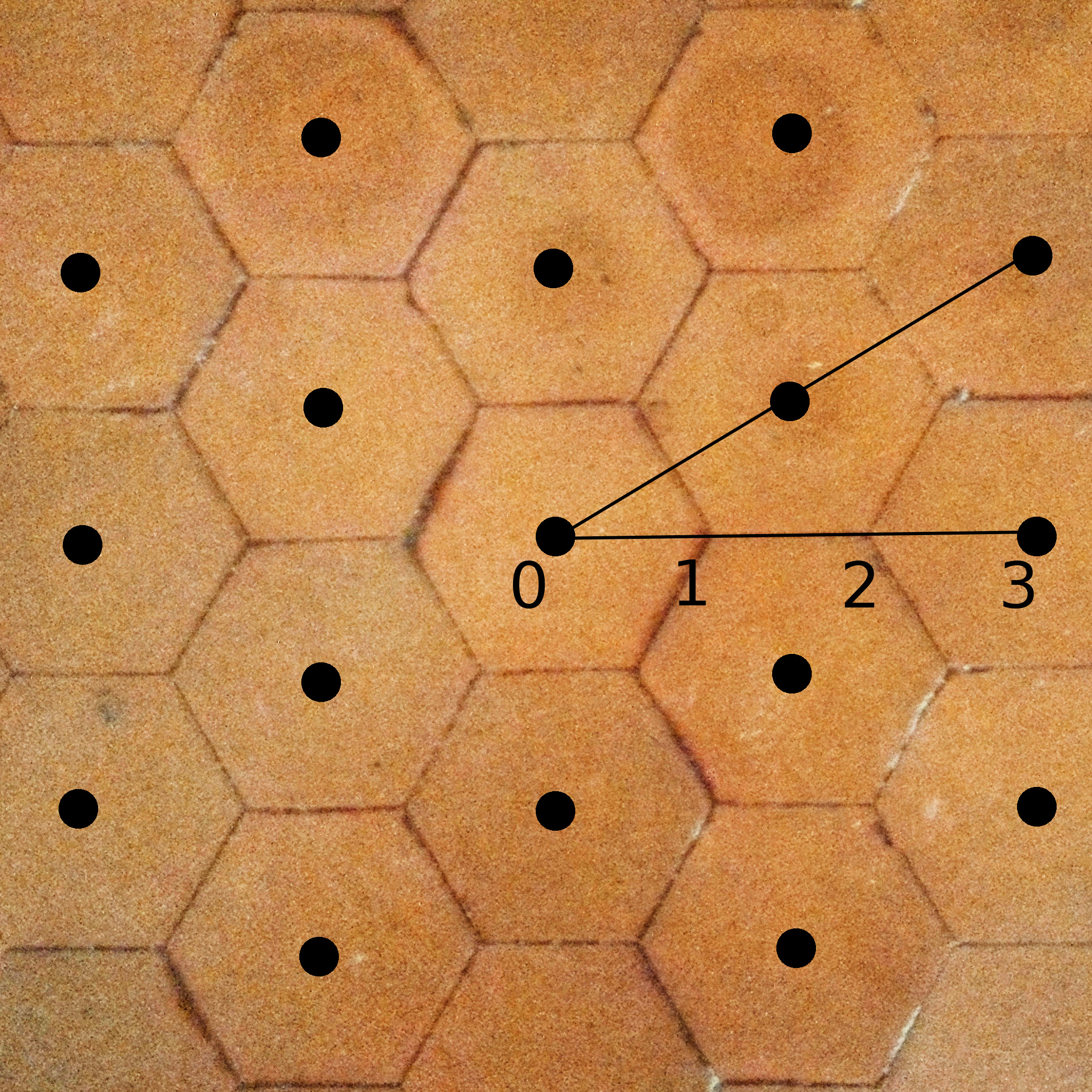}
\caption{Hexagonal lattice $\lat$ on the floor of the Maths department at the University of Buenos Aires.}\label{fig:hexagon}
\end{figure}

We want to provide a Helson map for this representation based on the construction given in Proposition \ref{prop:periodization}.
In order to do so, we start by choosing an orthonormal basis for $L^2(\R^2)$. Let $H \subset \R^2$ be the hexagonal domain with vertices
$$
(1,0)\, , \ (\frac12,\frac{\sqrt{3}}{2})\, , \ (-\frac12,\frac{\sqrt{3}}{2}) \, , \ (-1,0)\, , \ (-\frac12,-\frac{\sqrt{3}}{2})\, , \ (\frac12,-\frac{\sqrt{3}}{2})
$$
(see Figure \ref{fig:hexagon}), and let $\LL \displaystyle = \left( \begin{array}{rr} 3 & \frac32 \vspace{2pt}\\ 0 & \frac{\sqrt{3}}{2} \end{array}\right)$. Then $H$ tiles $\R^2$ by translations with the lattice $\displaystyle\lat = \LL \Z^2 = \bigg\{(3m + \frac32 n, \frac{\sqrt{3}}{2}n) \, : \, (m,n) \in \Z^2\bigg\}$. Let us denote by $\wl = (\LL^t)^{-1} \displaystyle = \left( \begin{array}{rr} \frac13 & 0 \vspace{2pt}\\ -\frac{1}{\sqrt{3}} & \frac{2}{\sqrt{3}} \end{array}\right)$, and by $\lat^\bot = \{k \in \R^2 \, : \, k \cdot l \in \Z \ \forall \, l \in \lat\} = \wl \Z^2$ the annihilator lattice of $\lat$. Then it is well known \cite{Fug74} that $\{\frac{1}{\sqrt{|H|}}e^{2\pi i k \cdot}\}_{k \in \lat^\bot}$ is an orthonormal basis of $L^2(H)$, where $|H| = \frac{3\sqrt{3}}{2}$. Thus, the system $\Psi = \{\psi_{l,k} \, : \, (l,k) \in \lat \times \lat^\bot\} \subset L^2(\R^2)$ given by
$$
\psi_{l,k}(x) = \frac{1}{\sqrt{|H|}} T_l e^{2\pi i k \cdot x} \one{H}(x) = \frac{1}{\sqrt{|H|}} e^{2\pi i k \cdot x} \one{H + l}(x)
$$
defines an orthonormal basis of $L^2(\R^2)$, and we will use it to define the family of Lemma \ref{lem:generators}.

Since $H$ is invariant under rotations of 120 degrees and reflections over the $x$ axis, and since each $R_\gamma$ is an orthogonal matrix,
\begin{align*}
\pi(\gamma)\psi_{l,k}(x) & = \frac{1}{\sqrt{|H|}} e^{2\pi i k \cdot R_\gamma^{-1} x} \one{H + l}(R_\gamma^{-1} x) = \frac{1}{\sqrt{|H|}} e^{2\pi i (R_\gamma^{-1})^t k \cdot x} \one{R_\gamma(H + l)}(x)\\
& = \frac{1}{\sqrt{|H|}} e^{2\pi i R_\gamma k \cdot x} \one{H + R_\gamma l}(x) = \psi_{R_\gamma l, R_\gamma k}(x).
\end{align*}
Notice that $(R_\gamma l, R_\gamma k) \in \lat \times \lat^\bot$ for all $(l,k) \in (\lat \times \lat^\bot)$, because $R_\gamma l = L (L^{-1}R_\gamma l)$ and $L^{-1}R_\gamma l \in \Z^2$ for all $l \in \lat$, and the same holds for $\lat^\bot$. Thus
$$
\pi(\gamma)\psi_{l,k} \in \Psi \quad \forall \ \gamma \in \gamma \, , \ \forall \ (l,k) \in \lat \times \lat^\bot .
$$
Let us call $r$ the representation of $\SY_3$ in $\lat \times \lat^\bot$ given by $r_\gamma(l,k) = (R_\gamma l, R_\gamma k)$.
Then the set
$$
\ind = \Big(\lat \cap \{(x,y) \in \R^2 \, : \, 0 \leq y < \sqrt{3} x\} \Big) \times \Big(\lat^\bot \cap \{(x,y) \in \R^2 \, : \, 0 \leq y < \sqrt{3} x\} \Big)
$$
is a section of $(\lat \times \lat^\bot)/r$, i.e.
$
\displaystyle \lat \times \lat^\bot = \bigcup_{(l,k) \in \ind} \{r_\gamma (l,k) \, : \, \gamma \in \SY_3\} 
$
as a disjoint union, so that
$$
L^2(\R^2) = \bigoplus_{(l,k) \in \ind} \langle\psi_{(l,k)}\rangle_{\SY_3}
$$
where $\langle\psi_{l,k}\rangle_{\SY_3}$ is actually the finite span of the orbit $\{\pi(\gamma)\psi_{l,k}\}_{\gamma \in \SY_3}$.
Let us write $\ind$ as the disjoint union
$$
\ind = \{(0,0)\} \cup \partial \ind \cup \mathring\ind
$$
where
$$
\partial \ind = \Big\{(3m,0) \, : \, m = 1, 2, \dots \Big\} \times \Big\{\Big(\frac23 m, 0\Big) \, : \, m = 1, 2, \dots\Big\}
$$
and
$$
\mathring\ind = \Big(\lat \cap \{(x,y) \in \R^2 \, : \, 0 < y < \sqrt{3} x\} \Big) \times \Big(\lat^\bot \cap \{(x,y) \in \R^2 \, : \, 0 < y < \sqrt{3} x\} \Big).
$$
Notice that $r_\gamma(0,0) = (0,0)$ for all $\gamma \in \SY_3$, $r_b(l,k) = (l,k)$ for all $(l,k) \in \partial \ind$, and $r_\gamma(l,k) \neq r_{\gamma'}(l,k)$ for all $\gamma, \gamma' \in \SY_3, \gamma \neq \gamma'$ and all $(l,k) \in \mathring \ind$.

Since $\SY_3$ is finite, for all $p \geq 1$ we have $L^p(\vn(\SY_3)) = \vn(\SY_3) \approx M_{6 \times 6}(\C)$, so the bracket map writes as the finite sum
$$
[\varphi,\psi] = \sum_{\gamma \in \SY_3} \langle \varphi, \pi(\gamma)\psi \rangle_{L^2(\R^2)} \,\rho(\gamma)^* .
$$
Using that $\pi(\gamma)\psi_{l,k} = \psi_{r_\gamma(l,k)}$, and by the orthonormality of $\Psi$, we get
\begin{align*}
[\psi_{0,0},\psi_{0,0}] & = \sum_{\gamma \in \SY_3} \rho(\gamma)^*\\
[\psi_{l,k},\psi_{l,k}] & = \sum_{\gamma \in \SY_3} \langle \psi_{l,k}, \psi_{r_\gamma (l,k)} \rangle_{L^2(\R^2)} \,\rho(\gamma)^* = \Id_{\C^6} + \rho(b)^* \quad \forall \ (l,k) \in \partial \ind\\
[\psi_{l,k},\psi_{l,k}] & = \Id_{\C^6} \quad \forall \ (l,k) \in \mathring\ind .
\end{align*}
Note that $\sum_{\gamma \in \SY_3} \rho(\gamma)^*$ is the $6 \times 6$ matrix with $1$ in all entries, that is 6 times a projection of rank 1 in $\C^6$, while $\Id_{\C^6} + \rho(b)^* = \Id_{\C^6} + \rho(b) = \frac12(\Id_{\C^6} + \rho(b))^2$ is twice a projection of rank $3$ in $\C^6$.
Then, we have that
\begin{itemize}
\item $\{\pi(\gamma)\psi_0\}_{\gamma \in \SY_3}$ is a tight frame with constant 6;
\item $\{\pi(\gamma)\psi_j\}_{\gamma \in \SY_3}$, for $j \in \partial \ind$, is a tight frame with constant 2;
\item $\{\pi(\gamma)\psi_j\}_{\gamma \in \SY_3}$, for $j \in \mathring\ind$, is an orthonormal system.
\end{itemize}

We can then compute the Helson map $U_\Psi$ of Proposition \ref{prop:periodization} as follows:
\begin{align*}
U_\Psi[\varphi]_{0,0} & = \frac{1}{\sqrt{6}}[\psi_0,\psi_0] \frac16 [\varphi,\psi_{0,0}] = \frac{1}{6\sqrt{6}}\sum_{\gamma \in \SY_3} \rho(\gamma)^*[\varphi,\psi_{0,0}]\\
& = \frac{1}{6\sqrt{6}}\sum_{\gamma \in \SY_3} [\varphi,\pi(\gamma)\psi_{0,0}] = \frac{1}{\sqrt{6}}[\varphi,\psi_{0,0}]\\
U_\Psi[\varphi]_{l,k} & = \frac{1}{\sqrt{2}}[\psi_{l,k},\psi_{l,k}] \frac16 [\varphi,\psi_{l,k}] = \frac{1}{\sqrt{2}} (\Id_{\C^6} + \rho(b)^*) \frac12 [\varphi,\psi_{l,k}]\\
& = \frac{1}{2\sqrt{2}} \left([\varphi,\psi_{l,k}] + \rho(b)^* [\varphi,\psi_{l,k}]\right) = \frac{1}{2\sqrt{2}} \left([\varphi,\psi_{l,k}] + [\varphi,\pi(b)\psi_{l,k}]\right)\\
& = \frac{1}{\sqrt{2}} [\varphi,\psi_{l,k}]\ , \quad (l,k) \in \partial \ind\\
U_\Psi[\varphi]_{l,k} & = [\varphi,\psi_{l,k}] \ , \quad (l,k) \in \mathring \ind .
\end{align*}

\subsection{Translates for number-theoretic groups}

It is well known the there are LCA groups having no discrete subgroups and therefore, they do not fit in the setting of Section \ref{sec:translates} for analyzing spaces invariant under translations neither reproducing properties. In order to overcome this obstacle J. Benedetto and R. Benedetto proposed the following setting where a new kind of translation operators are defined (see \cite{BB04, BB18}). 

Let $G$ be a number-theoretic group, that is an LCA group with a compact and open subgroup $H$. Assume that $G$ is second countable and fix $\mathcal{C}\subset \wh G$ a section for the quotient $\wh G/H^\perp$, which turns out to be discrete and countable. We denote by $\wh f(\gamma) = \int_G f(x) \ol{\gamma(x)} dx$  the Fourier transform in the LCA group $G$.
The {\it translation operator by an element $[x]\in G/H$} of a function $f\in L^2(G)$ is noted by $T_{[x]}$
and defined through its Fourier Transform as 
$$\wh{T_{[x]}f} = \wh f \omega_{[x]},$$
where $\omega_{[x]}: \wh G\to \C$ is given by $\omega_{[x]}(\gamma):= \overline{\eta_\gamma(x)}$ for $\gamma = \eta_\gamma + \sigma_\gamma$ with $\eta_\gamma\in H^\perp$ and $\sigma_\gamma\in \mathcal{C}$.
These translation operators give rise to a unitary representation 
of the discrete group $G/H$ on $ L^2(G)$, namely $T:G/H\to \mathcal{U}( L^2(G))$, $[x]\mapsto T_{[x]}$. Indeed. By \cite[Rem. 2.3]{BB04} it holds that $T_{[x]}T_{[y]} = T_{[x+y]}$ for all $[x], [y]\in G/H$ and that  $T_{[e]} = \Id_{L^2(G)}$. Moreover, since $|\omega_{[x]}| = 1$ we have that 
$$\|T_{[x]}f\|_{L^2(G)} = \|\wh f \omega_{[x]}\|_{L^2(\wh G)} = \|\wh f \|_{L^2(\wh G)} = \|f \|_{L^2(G)}.$$ 

Let us see that $(G/H, T, L^2(G))$ is a dual integrable triple. For this, let $f,g\in L^2(G)$ and $[x]\in G/H$. Then, 
\begin{align*} 
\langle f , T_{[x]} g\rangle_{L^2(G)} &= \int_{\wh G} \wh f (\gamma)\overline{ \wh g(\gamma) \omega_{[x]}(\gamma)}\,d\gamma = \sum_{\sigma\in\mathcal{C}}\int_{H^\perp +\sigma} \wh f (\gamma)\overline{ \wh g(\gamma) \omega_{[x]}(\gamma)}\,d\gamma \\
& = \sum_{\sigma\in\mathcal{C}}\int_{H^\perp } \wh f (\eta+\sigma)\overline{ \wh g(\eta+\sigma) \omega_{[x]}(\eta+\sigma)}\,d\eta \\
&= \int_{H^\perp }  \sum_{\sigma\in\mathcal{C}}\wh f (\eta+\sigma)\overline{ \wh g(\eta+\sigma) }\eta(x)\,d\eta
\end{align*}
where we have used Plancherel Theorem, that $\wh G$ can be partitioned  by $\{H^\perp+\sigma\}_{\sigma\in\mathcal{C}}$ and the definition of $\omega_{[x]}$.
Since clearly $\sum_{\sigma\in\mathcal{C}}\wh f (\cdot+\sigma)\overline{ \wh g(\cdot+\sigma) } \in L^1(H^\perp)$, and $H^\perp\approx \wh {G/H}$, we conclude that the bracket map is given by 
\begin{equation}\label{eq:bracket-numer-theoretic-groups}
[f,g](\eta) = \sum_{\sigma\in\mathcal{C}}\wh f (\eta+\sigma)\overline{ \wh g(\eta+\sigma) }\quad\textrm{ for a.e. }\eta\in H^\perp.
\end{equation}
In this context, it can be proven that  the mapping given by 
$$\iso:L^2(G)\to L^2(H^\perp, \ell^2(\mathcal{C})), \quad\iso[f](\eta):=\{\wh f(\eta+\sigma) \}_{\sigma\in\mathcal{C}}
$$ for a.e. $\eta \in H^\perp$ is an isometric isomorphism that satisfies $\iso[T_{[x]} f] (\eta) = \eta(x) \iso[ f] (\eta) $ for a.e. $\eta \in H^\perp$. Thus, it is a Helson map for $(G/H, T, L^2(G))$. 

Recently, in \cite[Th. 4.5]{BB18}, it was proven that for $f\in L^2(G)$, the family $\{T_{[x]} f: [x]\in G/H\}$ is a frame sequence with constants $0<A\leq B<\infty$ if and only if 
$$ A\leq \sum_{\sigma\in\mathcal{C}}|\wh f (\eta+\sigma)|^2\leq B,$$
for a.e. $\eta \in \{\eta\in H^\perp: \sum_{\sigma\in\mathcal{C}}|\wh f (\eta+\sigma)|^2\neq0\}$. 
Once we have proven that $(G/H, T, L^2(G))$ is a dual integrable triple, one sees that \cite[Th. 4.5]{BB18} is the version of \cite[Th. A]{BHP15a} applied to this context (see also Corollary \ref{cor:principal} and \cite[Sec. 5]{BHP17}). Moreover, our Theorem \ref{theo:frames}
 generalizes \cite[Th. 4.5]{BB18} for families of the form $\{T_{[x]}\phi_i: [x]\in G/H, \,i\in \ind\}$ where $\ind $ is an at most countable index set. 

\

\noindent
{\bf Acknowledgements}: The authors would like to thank the anonymous referee for his/her comments which helped us to improve the presentation of the paper.

\bibliographystyle{abbrv}
\bibliography{BIBLIO.bib}

\noindent
\textbf{Davide Barbieri}\\
Universidad Aut\'onoma de Madrid, 28049 Madrid, Spain\\
\href{mailto:davide.barbieri@uam.es}{\tt davide.barbieri@uam.es}

\

\noindent
\textbf{Eugenio Hern\'andez}\\
Universidad Aut\'onoma de Madrid, 28049 Madrid, Spain\\
\href{mailto:eugenio.hernandez@uam.es}{\tt eugenio.hernandez@uam.es}

\

\noindent
\textbf{Victoria Paternostro}\\
Universidad de Buenos Aires and 
IMAS-CONICET, Consejo  Nacional de Investigaciones Cient\'ificas y T\'ecnicas, 1428 Buenos Aires,  Argentina\\
\href{mailto:vpater@dm.uba.ar}{\tt vpater@dm.uba.ar}

\end{document}